\newtheorem{thm}{Theorem}
\newtheorem{lemma}[thm]{Lemma}
\newtheorem{propo}[thm]{Proposition}
\newtheorem{coro}[thm]{Corollary}
\theoremstyle{definition}
\newtheorem{defn}{Definition}
\def\G{\mathcal{G}}
\def\tr{\rm{tr}}
\def\ind{\rm{ind}}
\def\dim{\rm{dim}}
\def\ker{\rm{ker}}
\def\im{\rm{im}}
\def\deg{{\rm deg}}
\def\coker{\rm{coker}}
\def\gcat{\rm{gcat}}
\def\Cat{\rm{Cat}}
\def\deg{\rm{deg}}
\def\str{\rm{str}}
\title{The McKean-Singer Formula in Graph Theory}
\author{Oliver Knill}
\date{January 6, 2013}
\address{
        Department of Mathematics \\
        Harvard University \\
        Cambridge, MA, 02138
        }
\subjclass{Primary: 05C50,81Q10 }
\keywords{Graph theory, Heat kernel, Dirac operator, Supersymmetry}
\begin{document}
\maketitle
\begin{abstract}
For any finite simple graph $G=(V,E)$, the discrete Dirac operator $D=d+d^*$ and the 
Laplace-Beltrami operator $L=d d^* + d^* d=(d+d^*)^2=D^2$ on the exterior algebra bundle 
$\Omega = \oplus \Omega_k$ are finite $v \times v$ matrices, where $\dim(\Omega) = v=\sum_k v_k$, with 
$v_k=\dim(\Omega_k)$ denoting the cardinality of the set $\G_k$ of complete subgraphs $K_k$ of $G$. 
We prove the McKean-Singer formula $\chi(G) = \str(e^{-t L})$ which holds for any 
complex time $t$, where $\chi(G) = \str(1)= \sum_k (-1)^k v_k$ is the Euler characteristic of $G$. 
The super trace of the heat kernel interpolates so the Euler-Poincar\'e formula
for $t=0$ with the Hodge theorem in the real limit $t \to \infty$. 
More generally, for any continuous complex valued function $f$ satisfying $f(0)=0$, 
one has the formula $\chi(G) = \str(e^{f(D)})$.  
This includes for example the Schr\"odinger evolutions 
$\chi(G) = \str(\cos(t D))$ on the graph. After stating some immediate general facts about the
spectrum which includes a perturbation result estimating the Dirac spectral difference of graphs,
we mention as a combinatorial consequence that the spectrum 
encodes the number of closed paths in the simplex space of a graph.
We give a couple of worked out examples and see that McKean-Singer allows to
find explicit pairs of nonisometric graphs which have isospectral Dirac operators. 
\end{abstract} 

\section{Introduction}

Some classical results in differential topology or Riemannian geometry have
analogue statements for finite simple graphs. Examples are 
Gauss-Bonnet \cite{cherngaussbonnet}, Poincar\'e-Hopf \cite{poincarehopf}, 
Riemann-Roch \cite{BakerNorine2007},
Brouwer-Lefschetz \cite{brouwergraph} or Lusternik-Schnirelman \cite{josellisknill}.
While the main ideas of the discrete results are the same as in the continuum, 
there is less complexity in the discrete. \\

We demonstrate here the process of discretizing manifolds using graphs for the 
{\bf McKean-Singer formula} \cite{McKeanSinger}
$$ \chi(G) = \str(e^{-t L})  \; ,  $$
where $L$ is the Laplacian on the differential forms and $\chi(G)$ is the Euler characteristic of 
the graph $G$. 
The formula becomes in graph theory an elementary result about eigenvalues of concrete finite matrices. 
The content of this article is therefore teachable in a linear algebra course.
Chunks of functional analysis like elliptic regularity which are needed in the continuum to 
define the eigenvalues are absent, the existence of solutions to discrete partial differential equations 
is trivial and no worries about smoothness or convergence are necessary.
All we do here is to look at eigenvalues of a matrix $D$ defined by a finite graph $G$. 
The discrete approach not only recovers results about the heat flow, we get results 
about unitary evolutions which for manifolds would require need analytic continuation:
also the super trace of $U(t) = e^{i t L}$ where $L$ is the Laplace-Beltrami operator is the 
Euler characteristic. While $U(t) f$ solves $(i d/dt-L)f=0$, the Dirac wave equation
$(d^2/dt^2+L)=(d/dt - iD)(d/dt + iD) f=0$ is solved by
\begin{equation}
\label{diracsolution}
f(t) = U(t) f(0) = \cos(Dt) f(0) + \sin(D t) D^{-1} f'(0) \; ,
\end{equation}
where $f'(0)$ is on the orthocomplement of the zero eigenspace. Writing $\psi=f-i D^{-1} f'$ 
we have $\psi(t) = e^{i Dt} \psi(0)$. The initial position and velocity of the real flow are 
naturally encoded in the complex wave function. That works also for a
discrete time Schr\"odinger evolution like
$T(f(t),f(t-1)) = (D f(t)-f(t-1), f(t))$ \cite{Kni98} if $D$ is scaled. \\

Despite the fact that the Dirac operator $D$ for a general simple graph 
$G=(V,E)$ is a natural object which encodes much of the geometry of the graph, 
it seems not have been studied in such an elementary setup.  
Operators in \cite{BullaTrenkler,Kenyon,Requardt,Bolte} 
have no relation to the Dirac matrices $D$ studied here. 
The operator $D^2$ on has appeared in a more general setting: 
\cite{Mantuano} builds on work of Dodziuk and Patodi and studies 
the combinatorial Laplacian $(d+d^*)^2$ 
acting on \v{C}ech cochains. This is a bit more general: if the cover of the graph consists 
of unit balls, then the \v{C}ech cohomology by definition agrees with the graph cohomology 
and the \v{C}ech Dirac operator agrees with $D$. Notice also that for the Laplacian $L_0$ 
on scalar functions, the factorization $L_0 = d_0 d_0^*$ is very well known since $d_0$ is the
incidence matrix of the graph. The matrices $d_k$ have been used by Poincar\'e already in 1900.  \\

One impediment to carry over geometric results from the continuum to the discrete 
appears the lack of a Hodge dual for a general simple graph and the absence of symmetry 
like Poincar\'e duality. Does some geometric conditions have to be imposed on the graph like 
that the unit spheres are sphere-like graphs to bring over results from compact Riemannian 
manifolds to the discrete? The answer is no. The McKean-Singer supersymmetry for eigenvalues holds 
in general for any finite simple graph. 
While it is straightforward to implement the Dirac operator $D$ of a graph in a computer, 
setting up $D$ can be tedious when done by hand because for a complete graph of $n$ nodes, 
$D$ is already a $(2^n-1) \times (2^n-1)$ matrix. 
For small networks with a dozen of nodes already, the Dirac operator acts on a vector space on vector spaces 
having dimensions going in to thousands. Therefore, even before we had the computer routines were in place 
which produce the Dirac operator from a general graph, computer algebra software was necessary to 
experiment with the relatively large matrices first cobbled together by hand. 
Having routines which encode the Dirac operator for a network is useful in many ways,
like for computing the cohomology groups. Such code helped us also to write \cite{josellisknill}.
Homotopy deformations of graphs or nerve graphs defined by a \v{C}ech cover allow to reduce
the complexity of the computation of the cohomology groups. \\

Some elementary ideas of noncommutative geometry can be explained well in this framework.
Let $H=L^2(\G)$ denote the Hilbert space defined by the simplex set $\G$ of the graph and let
$B(H)$ denote the Banach algebra of operators on $H$. It is a Hilbert space itself with the
inner product $(A,B) = tr(A^* B)$. The operator $D \in B(H)$ defines together with a subalgebra
$A$ of $B(H)$, a {\bf Connes distance} on $\G$. Classical geometry is when $A=C(\G)$ is the algebra
of diagonal matrices. In this case, the Connes metric
$\delta(x,y)=\sup_{l \in A, |[D,l]| \leq 1}(l,e_x-e_y)$ between two vertices
is the geodesic shortest distance but $\delta$ extends to a metric on all simplices $\G$.
The advantage of the set-up is that other algebras $A$ define other metrics on $\G$ and more
generally on {\bf states}, elements in the unit sphere $X$ of $A^*$. In the classical case,
when $A=H=C(\G) = R^v$, all states are {\bf pure states}, unit vectors $v$ in $H$ and correspond to states
$l \to tr(l \cdot e_v)=(lv,v)$ in $A^*$. For a noncommutative algebra $A$, elements in $X$ are in general
{\bf mixed states}, to express it in quantum mechanics jargon. We do not make use of this here but
note it as additional motivation to study the operator $D$ in graph theory. \\

\begin{figure}
\scalebox{0.45}{\includegraphics{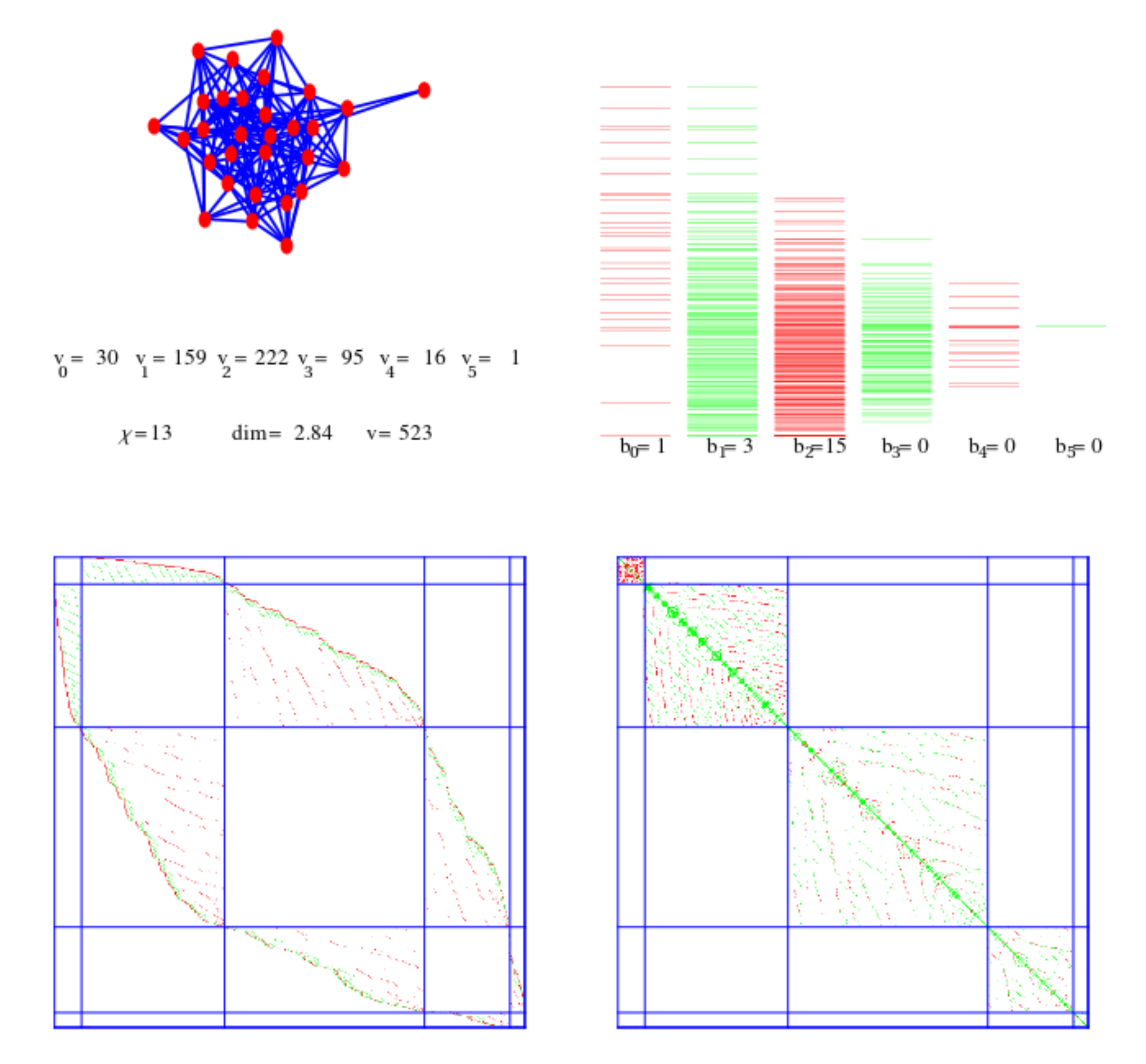}}
\caption{
The lower part shows the Dirac operator $D=d+d^*$ and the Hodge 
Laplace-Beltrami operator $L=d d^* + d^* d$ 
of a random graph with $30$ vertices and $159$ edges. The Dirac operator $D$ of this graph 
is a $523 \times 523$ matrix. 
The Laplacian decomposes into blocks $L_p$, which are the Laplacians on $p$ forms. 
The eigenvalues of $L_k$ shown in the upper right part of the figure are grouped 
for each $p$. The multiplicity of the eigenvalue $0$ is by the Hodge theorem the $p$'th Betti number $b_p$. 
In this case $b_0=1,b_1=3,b_2=13$. Nonzero eigenvalues come in pairs: each fermionic
eigenvalue matches a bosonic eigenvalue. This super symmetry discovered by McKean and Singer shows
that $\str(e^{i t L}) = \sum_{\lambda} (-1)^{deg(\lambda)} e^{i t \lambda} = \chi(G)
=\sum_p (-1)^p b_p = \sum_p (-1)^p v_p$. While all this is here just linear algebra, the McKean formula
mixes spectral theory, cohomology and combinatorics and it applies for any finite simple graph. 
\label{diraccompact}
}
\end{figure}

The spectral theory of Jacobi matrices and Schr\"odinger operators show that the discrete and 
continuous theories are often very close but that it is not always straightforward to translate 
from the continuum to the discrete. The key to get the same results as in the continuum 
is to work with the right definitions. For translating between compact Riemannian manifolds and 
finite simple graphs, the Dirac operator can serve as a link, as we see below. 
While the result in this paper is purely mathematical and just restates a
not a priory obvious symmetry known already in the continuum, 
the story can be interesting for teaching linear algebra or illustrating some 
ideas in physics. On the didactic side, the topic allows to underline one
of the many ideas in \cite{McKeanSinger}, using linear algebra tools only. 
On the physics side, it illustrates classical and relativistic quantum dynamics in a simple setup.
Only a couple of lines in a modern computer algebra system were needed
to realize the Dirac operator of a general graph as a concrete finite matrix and to find
the quantum evolution in equation (\ref{diracsolution}) as well as concrete examples of
arbitrary members of cohomology classes by Hodge theory. The later is also here just a remark
in linear algebra as indicated in an appendix.

\section{Dirac operator}

For a finite simple graph $G=(V,E)$, the exterior bundle 
$\Omega=\oplus_k \Omega_k$ is a finite dimensional vector space
of dimension $n=\sum_{k=0} v_k$, where $v_k=\dim(\Omega_k)$ is the cardinality of $\G_k$ 
the set of $K_{k+1}$ simplices contained in $G$. 
As in the continuum, we have a super commutative multiplication $\wedge$ 
on $\Omega$ but this algebra structure is not used here. 
The vector space $\Omega_k$ consists of all functions on $\G_k$ which are antisymmetric in all $k+1$ 
arguments. The bundle splits into an even $\Omega_b = \oplus_k \Omega_{2k}$ and an odd part 
$\Omega_f=\oplus_k \Omega_{2k+1}$ which are traditionally called the 
bosonic and fermionic subspaces of $\Omega$. The exterior derivative 
$d: \Omega \to \Omega, df(x_0,\dots, x_{n}) = \sum_{k=0}^{n-1} (-1)^k f(x_0,\dots,\hat{x}_k,\dots,x_{n-1})$ 
satisfies $d^2=0$. While a concrete implementation of $d$ requires to give an orientation of each 
element in $\G$, all quantities we are interested in do not depend on this orientation however. It is 
the usual choice of a basis ambiguity as it is custom in linear algebra. 

\begin{defn}
Given a graph $G=(V,E)$, define the {\bf Dirac operator} $D=d+d^*$ and 
{\bf Laplace-Beltrami} operator
$L=D^2 = d d^* + d^* d$. 
The operators $L_p = d_p^* d_p + d_{p-1} d_{p-1}^*$
leaving $\Omega_p$ invariant are called the Laplace-Beltrami operators $L_p$ on $p$-forms. 
\end{defn}

{\bf Examples.} We write $\lambda^{(m)}$ to indicate multiplicity $m$. \\
{\bf 1)} For the complete graph $K_n$ the spectrum of $D$ is $\{-\sqrt{n}^{(2^{n-1}-1)},0, -\sqrt{n}^{(2^{n-1}-1)} \}$. \\
{\bf 2)} For the circle graph $C_n$ the spectrum is $\{\pm \sqrt{2-2 \cos(2\pi k/n)} \},k=0,n-1\;$,
   where the notation understands that $0$ has multiplicity $2$. The product of the nonzero eigenvalues,
   a measure for the complexity of the graph is $n^2$.  \\
{\bf 3)} For the star graph $S_n$ the eigenvalues of $D$ are 
   $\{-\sqrt{n},-1^{(n-1)},0,1^{(n-1)},\sqrt{n} \; \}$. The product of the nonzero eigenvalues is $n$.  \\
{\bf 4)} For the linear graph $L_n$ with $n$ vertices and $n-1$ edges, 
    the eigenvalues of $D$ are the union of $\{0\}$,
    and $\pm \sigma{K}$, where $K$ is the $(n-1) \times (n-1)$ matrix which has $2$ in the
    diagonal and $1$ in the side diagonal. The product of the nonzero eigenvalues is $-n$.  \\

{\bf Remarks.} \\
{\bf 1)} The scalar part $L_0$ is one of
the two most commonly used Laplacians on graphs \cite{Chung97}. The operator $L$ generalizes it
to all $p$-forms, as in the continuum. \\
{\bf 2)} When $d_k$ is implemented as a matrix, it is called a signed
{\bf incidence matrix} of the graph. Since it depends on the choice of orientation for each simplex,
also the Dirac operator $D$ depends on this choice of basis. Both $L$ and 
the eigenvalues of $D$ do not depend on it.  \\
{\bf 3)} The Laplacian for graphs has been introduced by Kirkhoff in 1847 while proving the matrix-tree theorem.
The computation with incidence matrices is as old as algebraic topology.
Poincar\'e used the incidence matrix $d$ already in 1900 to compute Betti numbers \cite{Grone,Stillwell2009}.  \\
{\bf 4)} While $D$ exchanges bosonic and fermionic spaces 
$D: \Omega_b \to \Omega_f, \Omega_f \to \Omega_b$, the Laplacian $L$ 
is the direct sum $L_k: \Omega_k \to \Omega_k$ of Laplacians on $k$ forms. \\
{\bf 5)} The kernel of $L_k$ is called the vector space of {\bf harmonic $k$ forms}. 
Such harmonic forms represent cohomology classes. 
By Hodge theory (see appendix) the dimension of the kernel is equal to the $k$'th Betti number,
the dimension of the $k$'th cohomology group $H^k(G) = \ker(d_k)/\im(d_{k-1})$.  \\
{\bf 6)} Especially, $L_0$ is the graph Laplacian $B-A$, where
$B$ is the diagonal matrix containing the vertex degrees and $A$ is the adjacency matrix of the graph.
The matrix $L_0$ is one of the natural Laplacians on graphs \cite{Chung97}. \\
{\bf 7)} While $d_{p-1} d_{p-1}^*$ always has $p+1$
in the diagonal, the diagonal entries $d_p^* d_p(x,x)$ counts the number of $K_{p+1}$ 
simplices attached to the $p$ simplex $x$
and $d_p d_{p-1}^*(x,x)$ is always equal to $p+1$. The diagonal entries $L_k(x,x)$ 
therefore determine the number of $k+1$ dimensional simplices attached to a 
$k$ dimensional simplex $x$. We will see that the Dirac and Laplacian matrices are
useful for combinatorics when counting closed curves in $\G$ in the same way as adjacency matrices
are useful in counting paths in $G$. \\
{\bf 8)} The operator $D$ can be generalized as in the continuum to
to bundles. The simplest example is to take a $1$-form $A \in \Omega^1$ and to define 
$d_Af = d f + A \wedge f$. This is possible since $\Omega$ is an algebra.  We have then 
the generalized Dirac operator $D_A = d_A + d_A^*$ and $L_A = D_A^2$.
The {\bf curvature operator} $F g = d_A \circ d_A g$ is no more zero in general. 
McKean-Singer could generalizes to this more general setup as $D_A$ provides the super-symmetry.  \\
{\bf 9)} A $1$-form $A$ defines a field $F=dA$ which satisfies the 
{\bf Maxwell equations} $dF=0, d^* F=j$.  Physics asks to find the field $F$, given $j$. 
While this corresponds to the $4$-current in classical electro magnetism which includes charge and
electric currents, it should be noted that no geometric structure is assumed for the graph. 
The Maxwell equations hold for any finite simple graph. It defines the evolution of 
light on the graph. The equation $d^* dA = j$ can in a Coulomb gauge where $d^* A=0$ be written as $L_1 A = j$
where $L_1$ is the Laplacian on $1$ forms. Given a current $j$, we can get $A$ and so the electromagnetic field $F$.
by solving a system of linear equations. This is possible if $j$ is perpendicular to the 
kernel of $L_1$ which by Hodge theory works if $G$ is simply connected.
Linear algebra determines then determines the field $F$, a function on triangles of a graph. 
As on a simply connected compact manifold, a simply connected graph does not feature 
light in vacuum since $LA=0$ has only the solution $A=0$. \\
{\bf 10)} Dirac introduce the Dirac operator $D$ in the continuum to make quantum mechanics
Lorentz invariant. The quantum evolution becomes wave mechanics. Already in the continuum, there is mathematically
almost no difference between the wave evolution with given initial position and velocity of the wave
the Schr\"odinger evolution with a complex wave because both real and imaginary parts of 
$e^{iD t} (u-i D^{-1} v) = \cos(D t) u + \sin(D t) D^{-1} v$ solve the wave equation. The only difference
between Schr\"odinger and wave evolution is that the initial velocity $v$ of the wave 
must be in the orthocomplement of the zero eigenvalue. (This has to be the case also in the continuum,
if we hit a string, the initial velocity has to have average zero momentum in order not to displace the 
center of mass of the string.) A similar equivalence between Schr\"odinger and wave equation holds
for a time-discretized evolution $(u,v) \to (v-Du,u)$.  \\
{\bf 11)} An other way to implement a bundle formalism is to take a unitary group $U(N)$ and replace 
entries $1$ in $d$ with unitary elements $U$ and $-1$ with $-U$. Now $D=d+d^*$ 
and $L=D^2$ are selfadjoint operators on $L^2(\G,C^N)$ for some $N$ and can be implemented
as concrete $v \cdot N$ matrices. The gauge field $U$ defines now curvatures, which is a $U(N)$-valued function
on all triangles of $\G$. Again, the more general operator $D$ is symmetric and supersymmetry
holds for the eigenvalues. Actually, as $\tr(D^n)$ can be expressed as a sum over closed paths, 
the eigenvalues of $D_A$ are just $N$ copies of the eigenvalues of $D$ if the multiplicative curvatures are $1$
(zero curvature) and the graph is contractible. Since ${\tr}(f(D))=0$ for odd functions $f$ and
$\tr(D^2)$ is independent of $U$, the simplest functional on $U$ which involves the curvature is the Wilson 
action ${\rm tr}(D^4)$ which is zero if the curvature is zero. It is natural to ask to minimize this
over the compact space of all fields $A$. More general functionals are interesting like $\det^*(L)$,
the product of the nonzero eigenvalues of $L$; this is an interesting integral quantity in the flat $U=1$
case already: it is a measure for complexity of the graph and combinatorially interesting
since $\det^*(L_0)$ is equal to the number of spanning trees in the graph $G$. \\

\section{McKean-Singer}

Let $G=(V,E)$ be a finite simple graph. If $K_n$ denotes the complete graph with $n$ vertices, 
we write $\G_n$ for the set of $K_{n+1}$ subgraphs of $G$. The set of all simplices 
$\G=\bigcup_n G_n$ is the super graph of $G$ on which the Dirac operator lives. 
The graph $G$ defines $\G$ without addition of more structure but the additional structure
is useful, similarly as tangent bundles are useful for manifolds. 
It is useful to think of elements in $\G$ as elementary units. If $v_n$ is the cardinality of 
$\G_n$ then the finite sum 
$$  \chi(G)=\sum_{n=0}^{\infty} (-1)^n v_n $$ 
is called the Euler characteristic of $G$.   \\

The set $\Omega_n$ of antisymmetric functions $f$ on $\G_n$ is a linear space of $k$-forms. 
The exterior derivative $d: \Omega_n \to \Omega_{n+1}$ defined by 
$$ df(x_0,x_1,\dots, x_{n+1}) = \sum_{k} (-1)^k f(x_0,\dots,\hat{x}_k,\dots,x_{n+1}) $$ 
is a linear map which satisfies $d \circ d=0$.
It defines the cohomology groups $H^n(G) = {\rm ker}(d_{n})/{\rm im}(d_{n-1})$ of dimension $b_n(G)$
for which the Euler-Poincar\'e formula $\chi(G) = \sum_n (-1)^n b_n(G)$ holds. 
The direct sum $\Omega = \oplus_n \Omega_n$ is the discrete analogue of the exterior bundle.  \\

McKean and Singer have noticed in the Riemannian setup that the heat flow $e^{-t L}$
solving $(d/dt+L)f=0$ has constant super trace $\str(L(t))$. The reason is that the nonzero
bosonic eigenvalues can be paired bijectively with nonzero fermionic eigenvalues.
It can be rephrased that $D$ is an isomorphism from $\Omega_b^+ \to \Omega_f^+$,
where $\Omega_b^+$ is the orthogonal complement of the zero eigenspace in $\Omega_b$ and
$\Omega_f^+$ the orthogonal complement of the zero eigenspace in $\Omega_f$.
In the limit $t \to \infty$ we get the dimensions of the harmonic forms; in the
limit $t \to 0$ we obtain the simplicial Euler characteristic.

\begin{thm}[McKean-Singer]
For any complex valued continuous function $f$ on the real line satisfying $f(0)=0$ we have
$$  \chi(G) = \str( \exp(f(D)) ) \; . $$ 
Especially, $\str(\exp(-t L))= \chi(G)$ for any complex $t$. 
\end{thm}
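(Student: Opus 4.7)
The plan is to exploit the anticommutation between $D$ and the grading operator $\Gamma$ defined by $\Gamma|_{\Omega_b}=+1$ and $\Gamma|_{\Omega_f}=-1$, so that $\str(A)=\tr(\Gamma A)$ for any operator $A$ on $\Omega$. Since $d$ raises form degree by one and $d^*$ lowers it by one, $D=d+d^*$ sends $\Omega_b\to\Omega_f$ and $\Omega_f\to\Omega_b$; equivalently, $D\Gamma=-\Gamma D$. The operator $D$ is self-adjoint on the finite-dimensional inner product space $\Omega$, so it has an orthonormal eigenbasis $\{e_i\}$ with real eigenvalues $\{\lambda_i\}$, and $\exp(f(D))e_i=\exp(f(\lambda_i))\,e_i$.

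Writing the supertrace in this basis gives
\begin{equation*}
\str(\exp(f(D)))=\tr(\Gamma\exp(f(D)))=\sum_{i}\exp(f(\lambda_i))\,\langle e_i,\Gamma e_i\rangle.
\end{equation*}
The main step is to show $\langle e_i,\Gamma e_i\rangle=0$ whenever $\lambda_i\neq 0$. Decompose such an eigenvector as $e_i=b+c$ with $b\in\Omega_b$ and $c\in\Omega_f$. Because $D$ swaps the two parts, $De_i=\lambda_i e_i$ forces $Db=\lambda_i c$ and $Dc=\lambda_i b$. Then $\|Db\|^2=\langle b,D^2 b\rangle=\lambda_i^2\|b\|^2$ while $\|Db\|^2=\lambda_i^2\|c\|^2$, so $\|b\|=\|c\|$, which gives $\langle e_i,\Gamma e_i\rangle=\|b\|^2-\|c\|^2=0$. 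This is the McKean--Singer supersymmetric cancellation at the level of $D$: the nonzero part of the spectrum contributes nothing to the supertrace.

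What survives is only the $\lambda_i=0$ sector, and here the hypothesis $f(0)=0$ enters: $\exp(f(0))=1$. Since $D$ maps $\Omega_b\to\Omega_f$, its kernel splits as $\ker D=(\ker D\cap\Omega_b)\oplus(\ker D\cap\Omega_f)$, and we can choose the zero-eigenvectors to be purely bosonic or purely fermionic. Thus
\begin{equation*}
\str(\exp(f(D)))=\dim(\ker D\cap\Omega_b)-\dim(\ker D\cap\Omega_f).
\end{equation*}
Since $D^2=L$ and $L$ is a nonnegative sum of squares, $\ker D=\ker L=\bigoplus_k\ker L_k$. By the Hodge theorem (stated in the appendix and already noted as Remark~5 following the definition), $\dim\ker L_k=b_k$, so the right-hand side equals $\sum_k(-1)^k b_k$, which by Euler--Poincar\'e equals $\chi(G)$. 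Specializing $f(x)=-tx^2$ recovers the heat-kernel version $\str(e^{-tL})=\chi(G)$ for any complex $t$.

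The potential pitfall is the temptation to pair eigenvectors of $D$ directly via $\Gamma$: since $D\Gamma=-\Gamma D$, $\Gamma$ sends the $\lambda$-eigenspace of $D$ to the $(-\lambda)$-eigenspace, which is a different subspace, so one cannot simply pair $\lambda$ with itself. The cleaner argument is the norm identity $\|b\|=\|c\|$ above, which bypasses any explicit bijection and uses only self-adjointness of $D$ and the $\Z/2$-grading. Everything else is bookkeeping with the spectral decomposition.
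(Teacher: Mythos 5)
Your argument is correct, and at its core it is the same supersymmetric cancellation the paper uses, but it is organized differently in two places, and the comparison is instructive. First, where the paper reduces $\exp(f(D))$ to an even function $g(L)$ of the form Laplacian (via Weierstrass approximation and the vanishing of the diagonal blocks of odd powers of $D$) and then pairs the nonzero eigenvalues of $L$ on $\Omega_b$ with those on $\Omega_f$ through the intertwiner $D:\Omega_b^+\to\Omega_f^+$, you diagonalize $D$ itself and show that every eigenvector with $\lambda\neq 0$ carries equal bosonic and fermionic weight, so its diagonal matrix element against the grading $\Gamma$ vanishes. This is arguably cleaner: it sidesteps the even/odd reduction, which as written in the paper is slightly delicate because $\exp(f(D))$ for non-even $f$ is not literally a function of $L$ (only its block-diagonal part is). Incidentally, your cautionary remark about $\Gamma$ is not needed for this step: since $\Gamma e_i$ is an eigenvector of $D$ for the eigenvalue $-\lambda_i$ and $D$ is self-adjoint, $\langle e_i,\Gamma e_i\rangle=0$ for $\lambda_i\neq 0$ follows at once from orthogonality of distinct eigenspaces; your norm computation proves the same fact by hand. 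Second, your endgame routes through the Hodge theorem and the Euler--Poincar\'e formula to identify the kernel contribution with $\sum_k(-1)^k b_k=\chi(G)$. That is valid and not circular (both facts are established independently in the appendix), but it imports more than necessary: writing $\str(\exp(f(D)))=\str(1)+\str(\exp(f(D))-1)$, the second supertrace vanishes because $\exp(f(D))-1$ annihilates $\ker D$ (this is exactly where $f(0)=0$ enters) while the nonzero modes cancel by your key step, and $\str(1)=\sum_k(-1)^k v_k=\chi(G)$ by definition. This finish, which is the one the paper intends (see Remark 1 after the theorem), keeps the proof purely combinatorial and preserves the point that McKean--Singer interpolates between Euler--Poincar\'e and Hodge rather than resting on them; combined with your computation of the kernel term it in fact re-derives the Euler--Poincar\'e formula as a byproduct.
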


\begin{proof}
Since $D=d+d^*$ is a symmetric $v \times v$ matrix, the kernel of $D$ and the kernel of $L=D^2$ 
are the same. The functional calculus defines $f(D)$ for any complex valued continuous function.
Since $D$ is normal, $D D^* = D^* D = -L$, it 
can be diagonalized $D=U^* E D$ with a diagonal matrix $E$ and defines $f(D)=U^* f(E) U$ for any 
continuous function. 
Because $f$ can by Weierstrass be approximated by polynomials and the diagonal entries of $D^{2k+1}$ 
are empty, we can assume that $f(D)=g(D^2)=g(L)$ for an even function $g$. 
Let $\Omega^+$ be the subspace of $\Omega$ spanned by nonzero eigenvalues of $L$. 
Let $\Omega^+_p$ be the subspace generated by eigenvectors to nonzero eigenvalues of $L$ on $\Omega_p$
and $\Omega^+_f$ be the subspace generated by eigenvectors to nonzero eigenvalues of $L$ on $\Omega_f$. 
Since $D$ commutes with $L$, each eigenvector $f$ of $g(L)$ on $\Omega_f$ has an eigenvector $D f$ of $g(L)$ 
on $\Omega_p$. Since $D: \Omega^+_p \to \Omega^+_m$ is invertible, there is a 
bijection between fermionic and bosonic eigenvalues. 
Each nonzero eigenvalue appears the same number of times on the fermionic and bosonic part. 
\end{proof}

{\bf Remarks.} \\
{\bf 1} By definition, $\str(1) = \sum_{k=0} (-1)^k v_k$ agrees with the Euler characteristic.
It can be seen as an {\bf analytic index} $\ind(D)$ of the restricted Dirac operator $\Omega_b \to \Omega_f$
because $\ker(D)$ is the space of harmonic states in $\Omega_b$ and $\coker(D) = \ker(D^*)$ is
the dimension of the fermionic harmonic space.\\
{\bf 2)} The original McKean-Singer proof works in the graph theoretical setup too 
as shown in the Appendix.  \\
{\bf 3)} In \cite{Kni98} we have for numerical purposes discretized the Schroedinger flow. This shows that
we can replace the flow $e^{tD}$ by a map $T(f,g) = (g-D f,f)$ with a suitably rescaled $D$
which is dynamically similar to the unitary evolution and has the property that the system has
{\bf finite propagation speed}. The operator $T^2(f,g)=(f-D(g-Df),g-Df)=(f+Lf,g)-(Dg,Df)$ 
has the super trace $\str(T^2) = \str((1,1)) = 2 \chi(G)$ so that also this 
discrete time evolution satisfies the McKean-Singer formula. \\
{\bf 4)} The heat flow proof interpolating between the identity and the projection onto
harmonic forms makes the connection between Euler-Poincar\'e and Hodge more natural.
While for $t=0$ and $t=\infty$ we have a Lefshetz fixed point theorem,
for $0<t<\infty$ it can be seen as an application of the Atiyah-Bott generalization of
that fixed point theorem. In the discrete, Atiyah-Bott is very similar
to Brouwer-Lefshetz \cite{brouwergraph}. For $t=0$ the Lefshetz fixed point theorem expresses the
Lefshetz number of the identity as the sum of the fixed points by Poincar\'e-Hopf.
In the limit $t \to \infty$, the Lefshetz fixed point theorem
sees the Lefshetz number as the signed sum over all fixed points which are harmonic forms.
For positive finite $t$ we can rephrase McKean-Singer's result that the Lefshetz number 
of the Dirac bundle automorphism $e^{-t L}$ is time independent. 

\section{The spectrum of $D$}

We start with a few elementary facts about the operator $D$: 

\begin{propo}
Let $\vec{\lambda}$ denote the eigenvalues of $D$ and let $\deg$ denote the maximal degree of $G$.
If $\lambda$ is an eigenvalue, then $-\lambda$ is an eigenvalue too so that
${\rm E}[\lambda]=\sum_i \lambda_i=0$.
\end{propo}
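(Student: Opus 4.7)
The plan is to exploit the $\mathbb{Z}/2$-grading on $\Omega = \Omega_b \oplus \Omega_f$ that underlies all the super-trace machinery already used for McKean-Singer. Concretely, I would introduce the grading involution $\gamma : \Omega \to \Omega$ acting as $+1$ on $\Omega_b$ and $-1$ on $\Omega_f$; this is just the sign operator whose trace is $\chi(G)$.

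The key observation is that $D$ anticommutes with $\gamma$. Since $d$ raises the form-degree by one and $d^*$ lowers it by one, both $d$ and $d^*$ (and hence $D = d + d^*$) swap the bosonic and fermionic subspaces, so $\gamma D = -D \gamma$. From this one-line relation the pairing is immediate: if $Dv = \lambda v$ with $v \neq 0$, then
$$ D(\gamma v) = -\gamma D v = -\lambda (\gamma v), $$
and $\gamma v \neq 0$ because $\gamma^2 = 1$. Thus $v \mapsto \gamma v$ gives a linear bijection between the $\lambda$-eigenspace and the $(-\lambda)$-eigenspace, so eigenvalues appear in $\pm$ pairs with equal multiplicities, and in particular $\sum_i \lambda_i = 0$.

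As a sanity check (and a second proof of the trace part), one can read $\mathrm{tr}(D) = 0$ directly from the matrix: in the block decomposition adapted to $\Omega_b \oplus \Omega_f$,
$$ D = \begin{pmatrix} 0 & A \\ A^* & 0 \end{pmatrix}, $$
where $A$ is the restriction of $d+d^*$ mapping $\Omega_f \to \Omega_b$. The diagonal is empty, so $\mathrm{tr}(D) = 0$, consistent with $\sum_i \lambda_i = 0$.

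There is essentially no obstacle here; the only care needed is to notice that $\gamma$ is an honest involution (so that $v \neq 0 \Rightarrow \gamma v \neq 0$) and that it intertwines eigenspaces without collapsing them, which is why one gets matching multiplicities rather than merely matching sets of eigenvalues. The mention of $\deg$ in the statement plays no role in this particular symmetry assertion and presumably anticipates a subsequent norm/estimate statement.
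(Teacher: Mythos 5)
Your proof is correct and is essentially the paper's own argument: your grading involution $\gamma$ is exactly the operator $P$ (equal to $+1$ on $\Omega_b$ and $-1$ on $\Omega_f$) used in the paper, and the anticommutation $\gamma D=-D\gamma$ together with $D(\gamma v)=-\lambda(\gamma v)$ is the same supersymmetry computation. The block-off-diagonal form of $D$ that you add as a sanity check also appears in the paper's proof.
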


\begin{proof} 
If we split the Hilbert space as $\Omega=\Omega_p \oplus \Omega_f$ 
then  $D=\left[ \begin{array}{cc} 0 & A^* \\ A & 0 \end{array} \right]$, 
where $A=d+d^*$ is the annihilation operator and $A^*$ which maps bosonic to fermionic states and 
$A$ is the creation operator. Define $P=\left[ \begin{array}{cc} 1 & 0 \\ 0 & -1 \\ \end{array} \right]$. 
Then $L=D^2, P^2=1, DP+PD=0$ is called supersymmetry in 0 space dimensions (see \cite{Witten1982,Cycon}). 
If $Df=\lambda f$, then $PD Pf = -D f = -\lambda f$. 
Apply $P$ again to this identity to get $D (Pf) = - \lambda (Pf)$. This shows that
$Pf$ is an other eigenvector. 
\end{proof}

\begin{propo}
Every eigenvalue $\lambda$ is  contained in $[-\sqrt{2\deg},\sqrt{2\deg}]$ so that
every eigenvalue of $L=D^2$ is contained in the interval $[0,2\deg]$
and ${\rm Var}[\lambda] = \sum_{i=1}^n \lambda_i^2/n \leq 2\deg$.
\end{propo}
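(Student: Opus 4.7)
The plan is to reduce all three claims to the single estimate $\|L\| \leq 2\deg$. Since $D$ is a real symmetric matrix, its eigenvalues $\lambda_i$ are real, and the eigenvalues of $L = D^2$ are $\lambda_i^2 \geq 0$. A bound $\|L\| \leq 2\deg$ therefore implies $|\lambda_i| \leq \sqrt{2\deg}$, and also $\mathrm{Var}[\lambda] = n^{-1}\sum \lambda_i^2 \leq 2\deg$, using that the mean of the $\lambda_i$ is zero by the previous proposition.

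Since $L$ is block-diagonal with blocks $L_k$ acting on $\Omega_k$, it suffices to show $\|L_k\| \leq 2\deg$ for each $k$, and I would do this via Gershgorin's disk theorem. The diagonal entries are $L_k(x,x) = s_k(x) + (k+1)$ for $k \geq 1$ and $L_0(v,v) = \deg(v)$, where $s_k(x)$ denotes the number of $(k+1)$-simplices containing the $k$-simplex $x$. For the off-diagonal entries, a nonzero $L_k(x,y)$ forces $|x \cap y| = k$ (so $x,y$ differ in one vertex). Moreover, a direct sign computation based on the standard ordering convention for $d$ shows that when $x \cup y$ is itself a $(k+1)$-simplex in $\G_{k+1}$, the contributions $d_k^* d_k(x,y) = \pm 1$ and $d_{k-1} d_{k-1}^*(x,y) = \pm 1$ cancel exactly, so $L_k(x,y) = 0$. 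Thus $|L_k(x,y)| = 1$ only when $x, y$ differ in one vertex and $x \cup y$ is \emph{not} a simplex.

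The off-diagonal row sum of $L_k$ at $x$ therefore equals $T(x)$, the number of external vertices $v \in V \setminus x$ adjacent to exactly $k$ of the $k+1$ vertices of $x$. To bound $L_k(x,x) + T(x) \leq 2\deg$ I would double-count external adjacencies: summing $|N(v_i) \setminus x| = \deg(v_i) - k$ over the $k+1$ vertices of $x$ gives at most $(k+1)(\deg - k)$; classifying each external vertex $v$ by the number of $v_i$'s to which it is adjacent ($k+1$ if $v \in N(x)$, $k$ if $v$ contributes to $T(x)$, and less otherwise) yields $(k+1)s_k(x) + k T(x) \leq (k+1)(\deg - k)$. A short algebraic manipulation using $(k-1)\deg \geq 0$ for $k \geq 1$ converts this into $s_k(x) + (k+1) + T(x) \leq 2\deg$. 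The case $k = 0$ reduces to the standard Gershgorin bound $2\deg(v) \leq 2\deg$ for the scalar Laplacian $L_0 = B - A$.

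The main obstacle is the off-diagonal sign cancellation: without it, Gershgorin applied directly to $L_k$ or to $d_k^* d_k$ is too weak and can exceed $2\deg$ already for $k \geq 2$ with moderate $\deg$ (for instance on $K_5$ the row sum of $|d_1^* d_1|$ alone is $9$ while $2\deg = 8$, yet $L_1 = 5I$ after the cancellation). Once the cancellation is in hand, the remaining inequalities are elementary counting.
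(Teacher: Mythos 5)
Your proof is correct, but it follows a genuinely different route from the paper's. The paper works directly with $D$: since every entry of $D$ is $0,\pm1$ and the row indexed by a $k$-simplex $x$ has only $\deg_k(x)+(k+1)\le \deg+1\le 2\deg$ nonzero entries, each row has Euclidean length at most $\sqrt{2\deg}$, and Schur's inequality $\sum_i\lambda_i^2\le\sum_{i,j}|D_{ij}|^2\le n\cdot 2\deg$ delivers the variance bound in one line. You instead pass to $L=D^2$, block-diagonalize, and run Gershgorin on each $L_k$ after first identifying the off-diagonal entries exactly: the cancellation $d_k^*d_k(x,y)+d_{k-1}d_{k-1}^*(x,y)=0$ when $x\cup y$ spans a $(k{+}1)$-simplex (which, incidentally, needs no ``standard ordering convention'' --- it is the codimension-two face identity equivalent to $d^2=0$, so it holds for any choice of orientations), followed by the double count $(k+1)s_k(x)+kT(x)\le(k+1)(\deg-k)$. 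What your longer argument buys is a complete proof of the pointwise bound $|\lambda|\le\sqrt{2\deg}$: a bound on the Euclidean length of each row does not by itself control the spectral radius (the all-ones $n\times n$ matrix has rows of length $\sqrt{n}$ and top eigenvalue $n$), so the paper's one-line justification of the first claim is really only airtight for the trace/variance statement, whereas your Gershgorin-plus-cancellation argument establishes $\|L_k\|\le 2\deg$ outright and, as your $K_5$ example shows, the cancellation is genuinely needed for this. The only cosmetic slips are that your counterexample to naive Gershgorin lives at $k=1$ rather than $k\ge2$, and that ``Gershgorin applied directly to $L_k$'' should read ``applied to the triangle-inequality bound $|d_k^*d_k(x,y)|+|d_{k-1}d_{k-1}^*(x,y)|$''; neither affects the argument.
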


\begin{proof}
Because all entries of $D$ are either $-1$ and $1$ and because there are $d$ such entries, each row 
or column has maximal length $\sqrt{d}$.  
That the standard deviation is $\leq \sqrt{2\deg}$ follows since the "random variables"
$\lambda_j$ take values in $[-\sqrt{2\deg},\sqrt{2\deg}]$. Also a theorem of Schur
\cite{SimonTrace} confirms that $\sum_i \lambda_i^2 \leq \sum_{i,j} |D_{ij}|^2 \leq n \cdot 2\deg$.
\end{proof}

{\bf Remarks}. \\
{\bf 1)} It follows that the spectrum of $D$ is determined by the spectrum of $L$. If $\mu_j \in [0,\deg]$ 
are the eigenvalues of $L$, then $\pm \sqrt{m_j}$ are the eigenvalues of $D$. It follows already from the
reflection symmetry of the eigenvalues of $D$ that the positive eigenvalues of $L$ appear {\bf in pairs}. 
The McKean-Singer statement is stronger than
that. It tells that if one member of the pair is in the bosonic sector, the other is in the fermionic one. \\
{\bf 2)} The Schur argument given in the proof is not quite irrelevant when looking at spectral statistics. 
Since the average degree in the Erdoes-R\'enyi probability space $E(n,1/2)$ is of the order $\log(n)$
the standard deviation of the eigenvalues is of the order $2\log(n)$ even so we can have eigenvalues
arbitrarily close to $\sqrt{2(n-1)}$.  \\
{\bf 3)} Numerical computations of the spectrum of $D$ for large random matrices is difficult because
the Dirac matrices are much larger than the adjacency matrices of the graph. It would be interesting
however to know more about the distribution of the eigenvalues of $D$ for large matrices. \\
{\bf 4} Sometimes, an other Laplacian $K_0$ defined as follows for graphs. 
Let $V_0(x)$ denote the degree of a vertex $x$. Define operator $K_{xx}=1$ if $V_0(x)>0$ and
$K_{xy} = -(V_0(x) V_0(y))^{-1/2}$ if $(x,y)$ is an edge. This operator satisfies $K=C L_0 C$, where
$C$ is the diagonal operator for which the only nonzero entries are $V_0(x)^{-1/2}$ if $V_0(x)>0$.
While the spectrum of $L_0$ is in $[0,2\deg]$ the spectrum of $K$ is in $[0,2]$. Since $L_0$ has integer
entries, it is better suited for combinatorics.
In the following examples of spectra we use the notation $\lambda^{(n)}$ indicating that the eigenvalue 
$\lambda$ appears with multiplicity $n$. \\
For the complete graph $K_{n+1}$ the spectrum of $K_0$ is $\{ 0,(n+1)/n)^{(n)} \}$ 
while the spectrum of $L_0$ is $\{ 0,n^{(n)} \; \}$.
For a cycle graph $C_n$ the spectrum of $K_0$ is $\{ 1-\cos(2\pi k/n) \; \}$ while the spectrum of
$L_0$ is $2-2 \cos(2\pi k/n)$. This square graph $C_4$  shows that the estimate $\sigma(L) \subset [0,2\deg]$ is
optimal. For a star graph $S_n$ which is an example of a not a vertex regular graph,
the eigenvalues of $K_0$ are $\{ 0,1^{(n-2)},2 \; \}$ while the spectrum of
$L_0$ is $\{ 0,1^{(n-2)},n \; \}$. \\

\begin{propo}
The number of zero eigenvalues of $D$ is equal to the sum of all the Betti numbers
$\sum_k b_k$. 
\end{propo}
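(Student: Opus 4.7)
The plan is to chain together three facts, each already established or easily justified within the linear-algebra setup of the paper.

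First, I would argue that $\ker(D) = \ker(L)$. Since $D$ is a real symmetric matrix, $\langle Df, Df\rangle = \langle f, D^2 f\rangle = \langle f, L f\rangle$, so $Lf = 0$ forces $\|Df\|^2 = 0$, hence $Df=0$; the reverse inclusion is obvious. (This identification was already used in the proof of the McKean--Singer theorem above.) So the number of zero eigenvalues of $D$ equals $\dim\ker(L)$.

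Second, I would use the block decomposition $L = \bigoplus_{k} L_k$ with $L_k$ acting on $\Omega_k$, noted in Remark 4 following the definition of the Dirac operator. This gives $\ker(L) = \bigoplus_{k} \ker(L_k)$, and therefore $\dim \ker(L) = \sum_k \dim \ker(L_k)$.

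Third, I would invoke the Hodge theorem for graphs, which appears in Remark 5 and which the paper relegates to the appendix: the space $\ker(L_k)$ of harmonic $k$-forms is isomorphic to the cohomology group $H^k(G) = \ker(d_k)/\mathrm{im}(d_{k-1})$, so $\dim \ker(L_k) = b_k(G)$. Combining the three steps gives the number of zero eigenvalues of $D$ equal to $\sum_k b_k$.

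There is no real obstacle here; the statement is essentially a bookkeeping consequence of Hodge theory together with $\ker(D) = \ker(D^2)$ for symmetric $D$. The only point that needs care is making sure the Hodge identification is cited rather than reproved, since the appendix handles it. The argument is short and purely linear-algebraic, which fits the spirit of the paper.
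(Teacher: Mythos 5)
Your argument is correct and is essentially the paper's own proof, which simply cites the Hodge theorem to get $\dim\ker(L_k)=b_k$ and implicitly uses the identification $\ker(D)=\ker(L)$ already noted in the McKean--Singer proof together with the block decomposition $L=\oplus_k L_k$. You have merely spelled out the bookkeeping steps that the paper leaves tacit.
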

\begin{proof}
This follows from the Hodge theorem (see Appendix): the dimension of the kernel of $L_k$
is equal to $b_k$. 
\end{proof} 

\begin{defn}
The {\bf Dirac complexity} of a finite graph is defined as the product of the 
nonzero eigenvalues of its Dirac operator $D$. 
\end{defn}

The Euler-Poincar\'e identity assures that $\sum_{i=0}^{\infty} (-1)^i (v_i - b_i)=0$.
If we ignore the signs, we get a number of interest: 

\begin{lemma}
The number of nonzero eigenvalue pairs in $D$ is the
{\bf sign-less Euler-Poincar\'e number} $\sum_{i=0}^{\infty} (v_i - b_i)/2$.
\end{lemma}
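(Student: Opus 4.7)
The plan is essentially a counting argument that combines the two propositions already proved in this section. Let me outline it.

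First I would record the total dimension: since $D$ acts on $\Omega=\bigoplus_k \Omega_k$ and the $k$-th summand has dimension $v_k$, the operator $D$ is a $v\times v$ matrix with $v=\sum_k v_k$. Counted with multiplicity, it therefore has $v=\sum_k v_k$ eigenvalues in total.

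Next I would invoke the preceding proposition, which states that the multiplicity of the zero eigenvalue of $D$ equals $\sum_k b_k$. Subtracting, the number of nonzero eigenvalues (with multiplicity) is exactly
\[
\sum_k v_k - \sum_k b_k \;=\; \sum_k (v_k-b_k).
\]

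Finally I would appeal to the earlier proposition in this section, which establishes the reflection symmetry: if $\lambda$ is an eigenvalue of $D$ with eigenvector $f$, then $-\lambda$ is an eigenvalue with eigenvector $Pf$, and for $\lambda\neq 0$ the eigenvectors $f$ and $Pf$ are linearly independent (they live in complementary $\pm 1$ eigenspaces of $P$). Hence the nonzero spectrum of $D$ is a disjoint union of pairs $\{\lambda,-\lambda\}$, and the number of such pairs is half the count above, namely $\sum_k (v_k-b_k)/2$.

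There is no real obstacle here: the statement is a direct bookkeeping consequence of the two propositions, and the only thing worth being careful about is checking that the $\lambda\leftrightarrow -\lambda$ matching is genuinely a bijection on nonzero eigenvalues with multiplicity, which is what the operator $P$ provides.
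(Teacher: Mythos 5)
Your argument is correct and is essentially identical to the paper's own proof: count the $v=\sum_k v_k$ total eigenvalues, subtract the $\sum_k b_k$ zero eigenvalues supplied by the Hodge-theoretic proposition, and halve using the $\pm\lambda$ symmetry from the supersymmetry proposition. The extra remark about $P$ giving a genuine multiplicity-preserving bijection on the nonzero spectrum is a welcome precision but does not change the route.
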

\begin{proof}
The sum $\sum_i v_i = v$ is the total number of eigenvalues and 
by the previous proposition, the sum $\sum_i b_i$ is the total number
of zero eigenvalues. Since the eigenvalues of $D$ come in pairs $\pm \lambda$,
the number of pairs is the sign-less Euler-Poincar\'e number. 
\end{proof} 

\begin{coro}
The sign-less Euler-Poincar\'e number is even if and only if the 
Dirac complexity is positive. 
\end{coro}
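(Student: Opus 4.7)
The plan is to combine the two immediately preceding results: the lemma identifies the sign-less Euler--Poincar\'e number $N=\sum_i(v_i-b_i)/2$ with the number of nonzero eigenvalue pairs of $D$, and the earlier proposition establishes that the nonzero spectrum of $D$ is symmetric about zero, so these eigenvalues genuinely pair up as $\pm\lambda_1,\dots,\pm\lambda_N$ with each $\lambda_j\neq 0$.

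With that setup, the Dirac complexity factors as
\[
\prod_{j=1}^{N}\lambda_j\cdot(-\lambda_j)=(-1)^{N}\prod_{j=1}^{N}\lambda_j^{2}.
\]
Since every $\lambda_j$ is real (as $D$ is symmetric) and nonzero, the product $\prod_j\lambda_j^2$ is strictly positive. Hence the sign of the Dirac complexity equals $(-1)^{N}$, which is $+1$ precisely when $N$ is even.

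The only thing one has to be slightly careful about is the pairing: one needs to know not just that $\lambda$ and $-\lambda$ both appear, but that they appear with the same multiplicity, so that a basis of nonzero eigenvectors really does organize into $N$ sign-paired couples. This is exactly what the involution $P$ in the earlier proof provides: $P$ restricts to a linear isomorphism from the $\lambda$-eigenspace onto the $(-\lambda)$-eigenspace, so multiplicities match. After noting this, the computation above is the whole argument, so there is no real obstacle.
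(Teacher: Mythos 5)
Your argument is correct and is essentially the paper's own proof: the paper likewise arranges the nonzero spectrum into $N$ sign-paired couples, each contributing a negative factor $-\lambda_j^2$, so the complexity has sign $(-1)^N$. Your extra remark that the involution $P$ matches eigenspace multiplicities is a welcome bit of care the paper leaves implicit, but it does not change the route.
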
 
\begin{proof}
Arrange the product of nonzero eigenvalues of $D$ as a product of pairs $-\lambda_j^2$. 
\end{proof}

\begin{coro}
If $G$ is a triangularization of a sphere and which
has an even number of edges, then the Dirac complexity is positive. 
\end{coro}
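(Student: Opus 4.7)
The plan is to reduce the claim to the preceding corollary by computing the sign-less Euler-Poincar\'e number explicitly in terms of the number of edges $E=v_1$, using the combinatorics of a sphere triangulation. I would first record the topological input: for any triangulation of the 2-sphere, Hodge theory (or direct computation) gives $b_0=1$, $b_1=0$, $b_2=1$ and $b_k=0$ for $k\geq 3$, so $\sum_k b_k = 2$.

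Next I would exploit that on a triangulated closed surface each 2-simplex has exactly three edges and each edge sits in exactly two triangles, which yields the incidence relation $3 v_2 = 2 v_1$. Combined with the Euler characteristic identity $\chi(G) = v_0 - v_1 + v_2 = 2$, this determines the simplex counts in terms of $v_1$: namely $v_2 = 2v_1/3$ and $v_0 = 2 + v_1/3$. Adding these gives
\[
\sum_k v_k \;=\; v_0 + v_1 + v_2 \;=\; 2 + 2 v_1.
\]

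Plugging into the formula of the preceding lemma, the sign-less Euler-Poincar\'e number is
\[
\frac{1}{2}\sum_k (v_k - b_k) \;=\; \frac{(2 + 2v_1) - 2}{2} \;=\; v_1 \;=\; E.
\]
Hence when $E$ is even, this number is even, and the previous corollary immediately yields that the Dirac complexity is positive.

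The only mild subtlety is justifying the incidence identity $3v_2 = 2v_1$, which uses the fact that a triangulation of $S^2$ is a closed surface triangulation so that every edge is shared by exactly two faces (not just at most two); beyond this the argument is purely arithmetic, so I do not anticipate a genuine obstacle.
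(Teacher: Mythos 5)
Your proof is correct and takes essentially the same route as the paper: compute $\sum_k b_k = 2$ from the Betti numbers of the sphere, show the sign-less Euler--Poincar\'e number equals $v_1$, and invoke the preceding corollary on its parity. The only difference is that your detour through the incidence relation $3v_2 = 2v_1$ is unnecessary --- the identity $v_0+v_1+v_2 = (v_0 - v_1 + v_2) + 2v_1 = 2 + 2v_1$ follows directly from $\chi(G)=2$ without solving for $v_0$ and $v_2$ separately, so the ``mild subtlety'' you flag (every edge lying in exactly two triangles) never needs to be justified.
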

\begin{proof}
We have $\chi(G) = v_0-v_1+v_2=2$ and since $b_0=b_2=1$ and $b_1=0$
we have $\sum_i b_i=2$. We can express now the sign-less Euler-Poincar\'e number
in terms of the number of edges: 
$$ [(v_0 +v_1 + v_2) - (b_0 + b_1 + b_2)]/2 = [(2+2 v_1) - 2]/2 = 2 v_1/2 = v_1 \; . $$
\end{proof}

The following explains why the dodecahedron or cube have negative complexity: 

\begin{coro}
If $G$ is a connected graph without triangles which has an even number
of vertices, then the Dirac complexity is negative. 
\end{coro}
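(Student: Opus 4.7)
The plan is to combine the previous corollary (sign-less Euler--Poincaré number even $\iff$ Dirac complexity positive) with the fact that a triangle-free graph has a very simple simplex structure, so the sign-less Euler--Poincaré number reduces to an explicit linear expression in $v_0$ and $v_1$ that turns out to be independent of $v_1$.

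First I would use the triangle-free hypothesis to observe that $\G_k$ is empty for $k \geq 2$, so $v_k = 0$ and hence $b_k = 0$ for $k \geq 2$. Next, connectedness gives $b_0 = 1$. The Euler--Poincaré formula collapses to
\begin{equation*}
v_0 - v_1 = \chi(G) = b_0 - b_1 = 1 - b_1,
\end{equation*}
so $b_1 = 1 - v_0 + v_1$, and the total Betti sum is $b_0 + b_1 = 2 - v_0 + v_1$.

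Then I would plug into the Lemma's formula for the sign-less Euler--Poincaré number:
\begin{equation*}
\frac{1}{2}\sum_i (v_i - b_i) = \frac{(v_0 + v_1) - (2 - v_0 + v_1)}{2} = v_0 - 1.
\end{equation*}
The assumption that $v_0$ is even makes $v_0 - 1$ odd, and the previous corollary then immediately gives that the Dirac complexity is negative.

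The only thing to keep honest is that the Dirac complexity is truly nonzero, so that ``negative'' is meaningful: but by construction it is the product over \emph{nonzero} eigenvalues, grouped as $\prod_j(-\lambda_j^2)$ over the $v_0 - 1$ pairs, hence it equals $(-1)^{v_0 - 1}\prod_j \lambda_j^2 \neq 0$. I do not foresee any real obstacle; the content of the corollary is essentially a bookkeeping consequence of the triangle-free hypothesis cutting off the simplicial complex at dimension one, which forces the sign-less Euler--Poincaré number to equal $v_0 - 1$ regardless of the number of edges.
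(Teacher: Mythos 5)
Your proof is correct and follows essentially the same route as the paper: both reduce the sign-less Euler--Poincar\'e number to $v_0-1$ using the triangle-free and connectedness hypotheses and then invoke the parity corollary. The only cosmetic difference is that you eliminate $b_1$ while the paper eliminates $v_0$ in the intermediate algebra, and your closing remark that the complexity is a nonzero product is a harmless extra check.
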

\begin{proof}
$\chi(G) = v_0-v_1 = b_0-b_1 = 1-b_1$. The sign-less Euler-Poincar\'e number is
$[(v_0+v_1)-(b_0+b_1)]/2 = [1-b_1+2v_1 - (1+b_1)]/2 = v_1-b_1 = v_0-b_0 = v_0-1$.  
\end{proof}

For cyclic graphs $C_n$ the complexity is $n^2$ if $n$ is odd and $-n^2$ if
$n$ is even.  For star graphs $S_n$, the complexity is $n$ is odd and $-n$
if $n$ is even. 

\begin{coro}
For a tree, the Dirac complexity is positive if and only if the number of edges
is even.
\end{coro}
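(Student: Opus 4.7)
The plan is to reduce this to the earlier corollary that the Dirac complexity is positive if and only if the sign-less Euler-Poincar\'e number is even. So the whole task is to compute the sign-less Euler-Poincar\'e number $\sum_i (v_i - b_i)/2$ for a tree and express it in terms of the edge count $v_1$.

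First, I would record the structural facts about a tree $G$. Since a tree is connected and acyclic, it contains no cycles of any length, and in particular no triangles or higher complete subgraphs; hence $v_k = 0$ for all $k \geq 2$, so only $v_0$ and $v_1$ contribute. Connectedness gives $b_0 = 1$ and the absence of cycles gives $b_1 = 0$ (and clearly $b_k = 0$ for $k \geq 2$ since $\Omega_k = 0$). The basic tree identity $v_0 = v_1 + 1$ is the last ingredient.

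Next I would substitute these values into the sign-less Euler-Poincar\'e number:
\[
  \sum_{i \geq 0} (v_i - b_i)/2 = \frac{(v_0 - 1) + (v_1 - 0)}{2} = \frac{v_1 + v_1}{2} = v_1,
\]
using $v_0 - 1 = v_1$. By the preceding corollary (sign-less Euler-Poincar\'e number even $\Leftrightarrow$ Dirac complexity positive), the Dirac complexity is positive precisely when $v_1$ is even.

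The main obstacle is essentially nonexistent: the statement is a direct bookkeeping consequence of the earlier corollary once the tree identities $v_k = 0$ for $k \geq 2$, $b_0 = 1$, $b_1 = 0$, and $v_0 = v_1 + 1$ are in hand. The only mild thing to verify carefully is that the sign-less number collapses exactly to $v_1$, so that the parity of the edge count, rather than of the vertex count, controls the sign. One can also sanity-check directly: the number of $\pm\lambda$ pairs equals $v_1$, and each contributes a factor $-\lambda^2 < 0$ to the Dirac complexity, so the overall sign is $(-1)^{v_1}$.
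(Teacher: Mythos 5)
Your proposal is correct and follows essentially the same route as the paper: both reduce the claim to the earlier corollary that the Dirac complexity is positive iff the sign-less Euler--Poincar\'e number is even, and both compute that number for a tree to be $v_1$ using $b_1=0$, $b_0=1$, $v_0=v_1+1$ and $v_k=0$ for $k\geq 2$. The paper's proof is just a terser version of your bookkeeping, piggybacking on the computation done for the preceding triangle-free corollary.
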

\begin{proof}
The Dirac complexity is still $v_1-b_1$ as in the previous proof but now $b_1=0$ so 
that it is $v_1$. 
\end{proof}

We have computed the complexity for all Platonic, Archimedian and Catalan solids
in the example section. All these 31 graphs have an even number $v_1$ of edges and an
even number $v_1$ of vertices. 

\section{Perturbation of graphs}

Next we estimate the distance between the spectra of different graphs:
We have the following variant of Lidskii's theorem which I learned from \cite{Last1995}:

\begin{lemma}[Lidskii]
For any two selfadjoint complex $n \times n$ matrices $A$ and $B$ with
eigenvalues $\alpha_1 \leq \alpha_2 \leq \dots \leq \alpha_n$ and
$\beta_1 \leq \beta_2 \leq \dots \leq \beta_n$, one has
$$ \sum_{j=1}^n |\alpha_j - \beta_j| \leq \sum_{i,j=1}^{n} |A-B|_{ij} \; . $$
\end{lemma}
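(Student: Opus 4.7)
The plan is to factor the inequality into two classical bounds. Write $C = A - B$ with eigenvalues $\gamma_1 \leq \cdots \leq \gamma_n$, and aim to show
$$\sum_{j=1}^n |\alpha_j - \beta_j| \;\leq\; \sum_{j=1}^n |\gamma_j| \;=\; \|C\|_1 \;\leq\; \sum_{i,j=1}^n |C_{ij}|,$$
where $\|C\|_1 = \tr |C|$ is the trace norm. The first inequality is Lidskii's classical eigenvalue inequality for Hermitian matrices; the second is an elementary consequence of the polar decomposition.

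For Lidskii, I would use Ky Fan's maximum principle: for each $k$,
$$\sum_{j=1}^k \lambda_j^{\downarrow}(M) \;=\; \max_{\dim W = k}\; \tr(P_W M),$$
where $\lambda_j^{\downarrow}$ denotes eigenvalues in decreasing order and $P_W$ is the orthogonal projection onto $W$. Applying this to $A = B + C$ and using subadditivity of the max yields
$$\sum_{j=1}^k \lambda_j^{\downarrow}(A) \;-\; \sum_{j=1}^k \lambda_j^{\downarrow}(B) \;\leq\; \sum_{j=1}^k \lambda_j^{\downarrow}(C)$$
for every $k$, with equality at $k = n$ by trace. That is, $\lambda^{\downarrow}(A) - \lambda^{\downarrow}(B)$ is majorized by $\lambda^{\downarrow}(C)$. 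The Hardy--Littlewood--P\'olya theorem then gives $\sum_j \varphi(\lambda_j^{\downarrow}(A) - \lambda_j^{\downarrow}(B)) \leq \sum_j \varphi(\lambda_j^{\downarrow}(C))$ for every convex $\varphi$, and the choice $\varphi(t) = |t|$ supplies the first inequality (the sum is insensitive to the sorting direction, so this matches the increasing orderings in the lemma).

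For the second bound, I use the polar decomposition $C = U |C|$ with $U$ unitary (extended arbitrarily on $\ker C$ if $C$ is not invertible). Then
$$\tr |C| \;=\; \tr (U^* C) \;=\; \sum_{i,j} \overline{U_{ij}}\, C_{ij} \;\leq\; \sum_{i,j} |U_{ij}|\,|C_{ij}| \;\leq\; \sum_{i,j} |C_{ij}|,$$
since every entry of a unitary matrix has modulus at most $1$ (each column is an $\ell^2$ unit vector). Combining with the first step completes the proof. The main obstacle is Lidskii's theorem itself, a classical but nontrivial majorization statement; more naive approaches, such as telescoping one entry at a time and bounding eigenvalue shifts by the operator norm of a rank-two perturbation via Weyl, lose a factor of $n$ and are not strong enough. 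Once Lidskii is invoked, the entrywise bound in the second step is a one-line calculation.
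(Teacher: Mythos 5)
Your overall architecture is the same as the paper's: factor the claim as $\sum_j|\alpha_j-\beta_j| \le \sum_j|\gamma_j| \le \sum_{i,j}|C_{ij}|$, take the first inequality to be classical Lidskii, and prove the second by hand. Your polar-decomposition computation $\mathrm{tr}\,|C| = \mathrm{tr}(U^*C) \le \sum_{i,j}|C_{ij}|$ is correct and is in fact the paper's own argument in different clothing: the paper diagonalizes $C$, writes $\sum_i|\gamma_i|=\sum_i(-1)^{m_i}\gamma_i$, and bounds $|\sum_i(-1)^{m_i}U_{ik}U_{il}|\le 1$ by Cauchy--Schwarz on the columns of the diagonalizing unitary; that inner sum is exactly the $(k,l)$ entry of your polar unitary. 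The paper, like you, does not prove classical Lidskii but cites it (from Simon's \emph{Trace Ideals}).

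The one place you go beyond the paper, the sketch of classical Lidskii, contains a genuine gap. Ky Fan's maximum principle plus subadditivity of the maximum gives $\sum_{j\le k}(\lambda_j^{\downarrow}(A)-\lambda_j^{\downarrow}(B)) \le \sum_{j\le k}\lambda_j^{\downarrow}(C)$ for \emph{initial segments} only. This is not yet the statement that $d=\lambda^{\downarrow}(A)-\lambda^{\downarrow}(B)$ is majorized by $\lambda^{\downarrow}(C)$: majorization requires the partial sums of $d$ \emph{after re-sorting $d$ decreasingly}, and $d$ need not be decreasing. The weaker initial-segment condition together with the trace identity does not imply the $\ell^1$ conclusion; for instance the abstract vectors $\gamma^{\downarrow}=(1,-1)$ and $d=(-2,2)$ satisfy $d_1\le\gamma_1^{\downarrow}$ and $d_1+d_2=\gamma_1+\gamma_2$, yet $\sum_j|d_j|=4>2=\sum_j|\gamma_j|$. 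So Hardy--Littlewood--P\'olya cannot be applied at that point. To obtain genuine majorization one needs the analogous inequality for sums over \emph{arbitrary} index sets of fixed cardinality, which is Wielandt's minimax principle rather than Ky Fan's; that is precisely the nontrivial content of Lidskii's theorem that you yourself flag. If, like the paper, you are content to cite classical Lidskii, your proof is complete and matches the paper's; the Ky Fan sketch as written is not a valid substitute for that citation.
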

\begin{proof}
Denote with $\gamma_i \in R$ the eigenvalues of the selfadjoint
matrix $C:=A-B$ and let $U$ be the unitary matrix diagonalizing $C$
so that ${\rm Diag}(\gamma_1, \dots ,\gamma_n)=UCU^*$. We calculate
\begin{eqnarray*}
 \sum_i |\gamma_i| &=& \sum_i (-1)^{m_i} \gamma_i
                    = \sum_{i,k,l} (-1)^{m_i} U_{ik} C_{kl} U_{il} \\
                    &\leq& \sum_{k,l} |C_{kl}| \cdot
                                    |\sum_i (-1)^{m_i} U_{ik} U_{il}|
                    \leq \sum_{k,l} |C_{kl}| \; .
\end{eqnarray*}
The claim follows now from Lidskii's inequality
$\sum_j |\alpha_j-\beta_j| \leq \sum_j |\gamma_j|$
(see \cite{SimonTrace})
\end{proof}

This allows to compare the spectra of Laplacians $L_0$ of graphs which are close. Lets define the following
metric on the Erdoes-R\'enyi space $G(n)$ of graphs of order $n$ on the same vertex set. 
Denote by $d_0(G,H)$ the number of edges at which $G$ and $H$ differ divided by $n$. 
Define also a metric between their adjacency 
spectra $\vec{\lambda}, \vec{\mu}$ as 
$$  d_0(\vec{\lambda},\vec{\mu}) = \frac{1}{n} \sum_{j=1}^n |\lambda_j-\mu_j| \; . $$

\begin{coro}
If the maximal degree in either $G,H$ is $\deg$, then the adjacency spectra distance
can be estimated by 
$$ d_0(\vec{\lambda},\vec{\mu})  = 2 \deg d_0(\sigma_0(G),\sigma_0(H)) \; . $$
\end{coro}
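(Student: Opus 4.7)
The plan is a direct one-shot application of the Lidskii lemma proved just above. Let $k := n\,d_0(G,H)$ denote the size of the symmetric edge difference $G \triangle H$, and write $C := M(G) - M(H)$, where $M$ is the matrix whose spectrum appears on the left of the claim.

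First I would bound the entry sum $\sum_{i,j}|C_{ij}|$ in terms of $k$. For the adjacency matrix $M = A$, each altered edge $(u,v)$ perturbs only the two symmetric off-diagonal entries of $C$ at positions $(u,v)$ and $(v,u)$, each by $\pm 1$, so $\sum_{i,j}|C_{ij}| = 2k$. For the scalar Laplacian $M = L_0 = B - A$ each altered edge additionally perturbs the diagonal entries at its endpoints, giving $\sum_{i,j}|C_{ij}| = 4k$. The factor $\deg$ in the statement enters as a row-degree bound: no vertex is incident to more than $\deg$ of the altered edges, so each row and column of $C$ has at most $O(\deg)$ nonzero entries, and the total entry sum is majorized by $2\deg \cdot k$.

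Then I apply Lidskii with $A = M(G)$ and $B = M(H)$:
$$ \sum_{j=1}^n |\lambda_j - \mu_j| \ \leq \ \sum_{i,j}|C_{ij}| \ \leq \ 2\deg\cdot k \ = \ 2\deg\cdot n\cdot d_0(G,H), $$
and dividing by $n$ delivers the advertised Lipschitz estimate
$$ d_0(\vec\lambda,\vec\mu) \ \leq \ 2\deg\cdot d_0(G,H). $$

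The main obstacle I foresee is matching the constant cleanly to the choice of matrix $M$. The elementary adjacency accounting above already produces the sharper constant $2$, and the factor $\deg$ only enters naturally if $M$ is the Laplacian on higher-dimensional simplices (or the full Dirac operator $D$), in which case a single edge edit can cascade into up to $\deg$ new triangles (common neighbours of the endpoints) and further simplices. In that Dirac setting one must first embed $D(G)$ and $D(H)$ into a common ambient Hilbert space indexed by the union of the two simplex sets before Lidskii is applicable; once that embedding is arranged, the row-degree bound $\deg$ propagates through the cascade and closes the argument with no additional surprises.
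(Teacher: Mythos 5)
Your proposal is correct and follows essentially the same route as the paper: apply the Lidskii lemma to $C=A(G)-A(H)$ and bound $\sum_{i,j}|C_{ij}|$ by counting the entries perturbed by the $k=n\,d_0(G,H)$ altered edges (the paper merely inserts a reduction to the single-difference case via the triangle inequality first, which your one-shot version shows is unnecessary). Your side observation that the adjacency accounting actually yields the sharper constant $2$, with the factor $\deg$ only becoming essential for the Laplacian or Dirac versions after embedding into a common simplex space, is consistent with the paper's own remark that the $L_0$ case is more delicate and with its later Dirac perturbation corollary.
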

\begin{proof}
We only need to verify the result for $d_0(G,H)=1$, since the left hand side
is the $l^1$ distance between the vectors $\lambda,\mu$ and the triangle inequality
gives then the result for all $d_0(G,H)$. For $d_0(G,H)=1$, the matrix $C=B-A$ 
differs by $1$ only in $2 \deg$ entries. The Lidskii lemma implies the result.
\end{proof}

{\bf Remarks.} \\
{\bf 1)} We take a normalized distance between graphs because this is better suited for taking graph limits.\\
{\bf 2)} As far as we know, Lidskii's theorem has not been used yet in the spectral theory of graphs. 
We feel that it could have more potential, especially when looking at random graph settings 
\cite{randomgraph}.   \\
3) For the Laplace operator $L_0$ this estimate would be more complicated since the 
diagonal entries can differ by more than $1$. It is more natural therefore 
to look at the Dirac matrices for which the entries are only $0,1$ or $-1$. \\

To carry this to Dirac matrices, there are two things to consider. First, the 
matrices depend on a choice of orientation of the simplices which requires to chose the 
same orientation if both graphs contain the same simplex. Second, the Dirac matrices 
have different size because $D$ is a $v \times v$ matrix if $v$ is the total number of 
simplices in $G$. Also this is no impediment: take the union of all simplices which occur
for $G$ and $H$ and let $v$ denote its cardinality. We can now write down possibly
{\bf augmented} $v \times v$ matrices $D(G),D(H)$
which have the same nonzero eigenvalues than the original Dirac operator.
Indeed, the later matrices are obtained from the augmented matrices by deleting the
zero rows and columns corresponding to simplices which are not present. 

\begin{defn}
Define the {\bf spectral distance} between $G$ and $H$ as
$$  \frac{1}{v} \sum_{j=1}^v |\lambda_j-\mu_j|   \; ,  $$
where $\lambda_j, \mu_j$ are the eigenvalues of the augmented Dirac matrices 
$G,H$, which are now both $v \times v$ matrices. 
\end{defn}

\begin{defn}
Define the {\bf simplex distance} $d(G,H)$ of two graphs $G,H$ with vertex set $V$ as $(1/v)$ times
the number of simplices of $G,H$ which are different in the complete graph on $V$. Here $v$ is the
total number of simplices in the union when both graphs are considered subgraphs of the complete
graph on $V$. 
\end{defn}

\begin{defn}
The {\bf maximal simplex degree} of a simplex $x$ of dimension $k$ in a graph $G$ is the 
sum of the number of simplices of dimension $k+1$ which contain $x$ and the 
sum of the number of simplices of dimension $k-1$ which are included in $x$. 
\end{defn}

In other words, the maximal simplex degree is the number of nonzero entries $d_{x,y}$ in a column $x$
of the incidence matrix $d$. We get the following perturbation result:  

\begin{coro}
The Dirac spectra of two graphs $G,H$ with vertex set $V$ satisfies
$$ d(\sigma(G),\sigma(H)) \leq 2 \deg \cdot d(G,H) \; , $$
if $\deg$ is the maximal simplex degree. 
\end{coro}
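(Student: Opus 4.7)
The plan is to apply Lidskii's lemma (just established) to the pair of augmented Dirac matrices $A = D(G)$ and $B = D(H)$, viewed as selfadjoint $v \times v$ matrices with sorted eigenvalues $\lambda_j$ and $\mu_j$. Since the spectral distance $d(\sigma(G),\sigma(H)) = \tfrac{1}{v} \sum_j |\lambda_j - \mu_j|$ is defined directly in terms of these augmented spectra, it will suffice to bound $\sum_{x,y} |(A-B)_{xy}|$ by $2\deg \cdot |S|$, where $S := G \triangle H$ is the set of simplices in the symmetric difference, and then divide by $v$ using $d(G,H) = |S|/v$.

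Before invoking Lidskii, I would fix a single global orientation on every simplex of $G \cup H$ and build both $D(G)$ and $D(H)$ with that common sign convention. This preparation step guarantees that whenever two simplices $x, y$ both lie in $G \cap H$, the entries $D(G)_{xy}$ and $D(H)_{xy}$ are either both zero (no codimension-$1$ incidence) or equal, so they cancel in $C := D(G) - D(H)$.

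The core step is a bookkeeping argument for the nonzero entries of $C$. For $C_{xy} \neq 0$, the pair $(x,y)$ must be in codimension-$1$ incidence, and by the orientation agreement above at least one of $x, y$ must lie in $S$. Now for each $s \in S$, the augmentation procedure forces whichever of $D(G), D(H)$ does not contain $s$ to have an identically zero row and column at $s$, while the other matrix has at most $\deg$ nonzero entries (each of absolute value $1$) in row $s$ and at most $\deg$ in column $s$, by the very definition of maximal simplex degree. Summing the row and column contributions over $s \in S$ (with harmless double-counting of pairs with both coordinates in $S$) gives
$$ \sum_{x,y} |C_{xy}| \;\leq\; 2 \deg \cdot |S|. $$
Combining with Lidskii, $\sum_j |\lambda_j - \mu_j| \leq 2 \deg \cdot |S|$, and dividing by $v$ yields the claim.

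The only real obstacle is the global orientation step: without a common orientation on $G \cap H$ the shared entries could appear as spurious $\pm 2$ differences instead of cancelling, destroying the linear dependence on $|S|$. Everything else is routine: the codimension-$1$ incidence structure of $D$ together with the definition of maximal simplex degree makes the row/column count immediate, and the lemma already does the heavy lifting of turning the entrywise $\ell^1$ bound into an eigenvalue $\ell^1$ bound.
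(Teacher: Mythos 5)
Your proposal is correct and follows the same route as the paper: fix a common orientation on the shared simplices so that $D(G)-D(H)$ is supported on incidences involving the symmetric difference, bound the entrywise $\ell^1$ norm by $2\deg\cdot|S|$ via a row/column count, and apply Lidskii's lemma before normalizing by $v$. The paper's own proof is just a terser version of this same argument, so there is nothing to add.
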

\begin{proof}
Since we have chosen the same orientation for simplices which are present in both graphs, 
this assures that the matrix $D(G)-D(H)$ has entries of absolute value $\leq 1$ and are nonzero
only at $D_{x,y}$, where an incidence happens exactly at one of the graphs $G,H$. 
The value $2 \deg d(G,H)$ is an upper bound for $\frac{1}{v} \sum_{k,l} |D(G)_{kl}-D(H)_{kl}|$,
which by Lidskii is an upper bound for the spectral distance 
$\frac{1}{v} \sum_{j=1}^v |\lambda_j-\mu_j|$.
\end{proof}

{\bf Remarks.}  \\
{\bf 1)} While the estimate is rough in general, it can become useful when looking at graph limits
or estimating spectral distances between Laplace eigenvalues of different regions.
If we look at graphs of fixed dimensions like classes of triangularizations of a manifold $M$, then 
$\deg$ is related to the dimension of $M$ only and both sides of the inequality 
have a chance to behave nicely in the continuum limit. \\
{\bf 2)} For large graphs which agree in many places,
most eigenvalues of the Laplacian must be close. The result is convenient to 
estimate the spectral distance between the complete graph $K_n$ and $G$. In that case,
$2 \cdot \deg = 2 (2^{n-1}-1) \leq 2^n$ and we have 
$$ d(\sigma(G),\sigma(K_n)) \leq d(G,K_n) \; . $$
Since the Dirac spectrum of $K_n$ is contained is the set 
$\{-\sqrt{n},0,\sqrt{n} \; \}$.
It follows that Erdoes-Renyi graphs in $G(n,p)$ for probabilities $p$ close to $1$ have a Dirac 
spectrum concentrated near $\pm \sqrt{n}$. 
If $G$ has $m$ simplices less than $K_n$, then $d(G,K_n) \leq m/2^n$.  \\

{\bf Examples.}  \\
{\bf 1)} Let $G$ be the triangle and $H$ be the line graph with three vertices. Then $v=7$.
We have $\deg=3$.  Since the graphs differ on the triangle and one edge only, we have
$d(G,H)=2/7$ and $2 \deg \cdot d(G,H)=12/7=1.71 \dots $. The augmented Dirac matrices are
$$   D_G = \left[
                  \begin{array}{ccccccc}
                   0 &  0 & 0 & -1 & -1 &  0 &  0 \\
                   0 &  0 & 0 &  0 &  1 & -1 &  0 \\
                   0 &  0 & 0 &  1 &  0 &  1 &  0 \\
                  -1 &  0 & 1 &  0 &  0 &  0 &  1 \\
                  -1 &  1 & 0 &  0 &  0 &  0 & -1 \\
                   0 & -1 & 1 &  0 &  0 &  0 & -1 \\
                   0 &  0 & 0 &  1 & -1 & -1 &  0 \\
                  \end{array}
                  \right], 
     D_H = \left[
                  \begin{array}{ccccccc}
                   0 &  0 & 0 & -1 &  0 &  0 &  0 \\
                   0 &  0 & 0 &  0 &  0 & -1 &  0 \\
                   0 &  0 & 0 &  1 &  0 &  1 &  0 \\
                  -1 &  0 & 1 &  0 &  0 &  0 &  0 \\
                   0 &  0 & 0 &  0 &  0 &  0 &  0 \\
                   0 & -1 & 1 &  0 &  0 &  0 &  0 \\
                   0 &  0 & 0 &  0 &  0 &  0 &  0 \\
                  \end{array}
                  \right] \; .  $$
The spectrum of the Dirac matrix of $G$ is
$\lambda = \{ -\sqrt{3},-\sqrt{3},-\sqrt{3},0,\sqrt{3},\sqrt{3},\sqrt{3} \; \}$
The spectrum of the augmented Dirac matrix of $H$ is
$\mu=\{ -\sqrt{3},-1,0,0,0,1, \sqrt{3} \; \}$.
The spectral distance is $d(\sigma(G),\sigma(H) ) = (2+2 \sqrt{3})/7=0.781 \dots$.  \\
{\bf 2)} If $G$ is $K_n$ and $H$ is $K_{n-1}$ then $v=2^n-1$.
We have $d(G,H) = [(2^{n}-1)-(2^{n-1}-1)]/(2^n-1) = 2^{n-1}/(2^n-1) \sim 1/2$
and $2 \deg = (2 (2^{n-1}-1)) \sim 2^n$ and so $2 \deg d(G,H) \sim 2^(n-1)$. 
The Dirac eigenvalues differ by $|\sqrt{n}-\sqrt{n-1})|$ at $(2^{n-1}-2)$ places
and by $\sqrt{n}$ at $(2^n-1)-(2^{n-1}-1)=2^{n-1}$ places. The spectral distance 
is $|\sqrt{n}-\sqrt{n-1})| \frac{(2^{n-1}-2) }{2^n-1} + \sqrt{n} \frac{2^{n-1}}{2^n-1}$
which is about $\sqrt{n}/2$.  Taking away one vertex together with all the edges is 
quite a drastic perturbation step. It gets rid of a lot of simplices and changes 
the dimension of the graph.  \\
{\bf 3)} Let $G$ be the wheel graph $W_4$ with 4 spikes. It is the simplest model for a 
planar region with boundary. Let $H$ be the graph where we make a pyramid extension over one
of the boundary edges. This is a homotopy deformation. The graph $G$ has $17$ simplices and the 
graph $H$ has $v=21$ simplices. The graph $H$ has one vertex, two edges and one triangle more 
than $G$ so that $d(G,H) = 4/21 = 0.190 \dots$. The maximal degree is $\deg=4$ so that 
the right hand side of the estimate is $32/21=1.52 \dots$. The eigenvalues of the Dirac operator
$D(H)=$ \\
\begin{center}
\begin{tiny}
$\left[
\begin{array}{ccccccccccccccccccccc}
0&0&0&0&0&0&a&a&a&a&0&0&0&0&0&0&0&0&0&0&0\\
0&0&0&0&0&0&0&1&0&0&a&a&a&0&0&0&0&0&0&0&0\\
0&0&0&0&0&0&0&0&0&0&0&1&0&a&a&0&0&0&0&0&0\\
0&0&0&0&0&0&1&0&0&0&0&0&0&1&0&a&0&0&0&0&0\\
0&0&0&0&0&0&0&0&1&0&1&0&0&0&0&0&0&0&0&0&0\\ 
0&0&0&0&0&0&0&0&0&1&0&0&1&0&1&1&0&0&0&0&0\\ 
a&0&0&1&0&0&0&0&0&0&0&0&0&0&0&0&0&0&a&0&0\\
a&1&0&0&0&0&0&0&0&0&0&0&0&0&0&0&a&a&0&0&0\\
a&0&0&0&1&0&0&0&0&0&0&0&0&0&0&0&1&0&0&0&0\\
a&0&0&0&0&1&0&0&0&0&0&0&0&0&0&0&0&1&1&0&0\\
0&a&0&0&1&0&0&0&0&0&0&0&0&0&0&0&a&0&0&0&0\\
0&a&1&0&0&0&0&0&0&0&0&0&0&0&0&0&0&0&0&a&0\\
0&a&0&0&0&1&0&0&0&0&0&0&0&0&0&0&0&a&0&1&0\\
0&0&a&1&0&0&0&0&0&0&0&0&0&0&0&0&0&0&0&0&a\\
0&0&a&0&0&1&0&0&0&0&0&0&0&0&0&0&0&0&0&a&1\\
0&0&0&a&0&1&0&0&0&0&0&0&0&0&0&0&0&0&a&0&a\\
0&0&0&0&0&0&0&a&1&0&a&0&0&0&0&0&0&0&0&0&0\\
0&0&0&0&0&0&0&a&0&1&0&0&a&0&0&0&0&0&0&0&0\\
0&0&0&0&0&0&a&0&0&1&0&0&0&0&0&a&0&0&0&0&0\\
0&0&0&0&0&0&0&0&0&0&0&a&1&0&a&0&0&0&0&0&0\\
0&0&0&0&0&0&0&0&0&0&0&0&0&a&1&a&0&0&0&0&0\\
\end{array}
   \right]$ \\
\end{tiny}
\end{center}
(where, we wrote $a=-1$ for typographic reasons) of $H$ are 
\begin{small}
$\{-2.370,-2.302,-2.266,-2.$ $,-1.913,-1.839,$ $-1.732,-1.5,-1.302,$
    $-0.9296,0,0.9296,1.302,$ $1.529,1.732,1.839,1.913,$ $2,2.266,2.302,2.370\}$. 
\end{small}
The eigenvalues of the augmented Dirac operator of $G$ are 
\begin{small}
$\{-2.236,-2.236,-2.236$ $,-1.732,$ $-1.732,-1.732,$ $-1.732,-1,0,0,$
   $0,0,0,1$ $,1.732,1.732,$ $1.732,1.732,2.236,2.236,2.236\}$.
\end{small}
The spectral differences is $0.337998 \dots$. We were generous with the
degree estimate. The new added point adds simplices with maximal degree $3$ so that the proof
of the perturbation result could be improved to get an upper bound $8/7 = 1.142 \dots$. 
It is still by a factor 3 larger than the spectral difference, but we get the idea:
if $H$ is a triangularization of a large domain and we just move
the boundary a bit by adding a triangle then the spectrum almost does not budge. 
This will allow us to study spectral differences 
$\lim_{n \to \infty} \frac{1}{\lambda_n} \sum_{j=1}^n |\lambda_j-\mu_j|$
of planar regions in terms of the area of the symmetric difference. 

\section{Combinatorics}

Adjacency matrices have always served as an algebraic bridge to study graphs. 
The entry $A^n_{ij}$ has the interpretation as the number of paths in $G$
starting at a vertex $i$ and ending at a vertex $j$. From the adjacency matrix $A$, 
the Laplacian $L_0=B-A$ is defined. Similarly as for $L_0$, 
we can read off geometric quantities also from the diagonal entries of the full
Laplacians $L=D^2$ on $p$ forms.

\begin{defn}
The number of $(p+1)$-dimensional simplices which contain the $p$-dimensional 
simplex $x$ is called the {\bf p-degree} of $x$. It is denoted $\deg_p(x)$. 
\end{defn}

{\bf Examples.} \\
{\bf 1)} For $p=0$, the degree $\deg_0(x)$ is the usual degree vertex degree $\deg(x)$ 
of the vertex $x$. \\
{\bf 2)} For $p=1$ the degree $\deg_1(x)$ 
is the number of triangles which are attached to an edge $x$. 

\begin{propo}[Degree formulas]
For $p>0$ we have ${\rm deg}_p(x) = L_p(x,x)-(p+1)$. 
For $p=0$ we have ${\rm deg}_0(x) = L_0(x,x)$.
\end{propo}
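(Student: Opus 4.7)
My plan is to read the diagonal entries of each summand of $L_p = d_p^* d_p + d_{p-1} d_{p-1}^*$ separately, so the proof reduces to two short bookkeeping steps plus a case split at $p=0$.

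First I would fix the matrix shape of $d_p$. As explained in the Dirac operator section, once an orientation on each simplex is chosen, $d_p$ is a matrix whose rows are indexed by $\G_{p+1}$ and whose columns are indexed by $\G_p$, with entry $(d_p)_{\sigma,x}\in\{+1,-1\}$ if the $p$-simplex $x$ is a codimension-one face of the $(p+1)$-simplex $\sigma$, and $0$ otherwise. The signs depend on orientation, but only the squared entries enter the diagonal of $d_p^*d_p$ and $d_{p-1}d_{p-1}^*$, so all orientation ambiguities disappear.

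Second, compute the two diagonal contributions. For any $x\in\G_p$,
\begin{equation*}
(d_p^*d_p)(x,x)=\sum_{\sigma\in\G_{p+1}}(d_p)_{\sigma,x}^{\,2}=\bigl|\{\sigma\in\G_{p+1}:\sigma\supset x\}\bigr|=\deg_p(x),
\end{equation*}
which is exactly the $p$-degree of $x$. For $p\geq 1$,
\begin{equation*}
(d_{p-1}d_{p-1}^*)(x,x)=\sum_{\tau\in\G_{p-1}}(d_{p-1})_{x,\tau}^{\,2}=\bigl|\{\tau\in\G_{p-1}:\tau\subset x\}\bigr|.
\end{equation*}
Because the $p$-simplex $x$ is a $K_{p+1}$ with $p+1$ vertices, its codimension-one faces are obtained by deleting one vertex at a time, so there are exactly $\binom{p+1}{p}=p+1$ such $\tau$. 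Thus $(d_{p-1}d_{p-1}^*)(x,x)=p+1$, which is the statement already flagged in Remark 7.

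Third, assemble the two pieces. For $p\geq 1$,
\begin{equation*}
L_p(x,x)=(d_p^*d_p)(x,x)+(d_{p-1}d_{p-1}^*)(x,x)=\deg_p(x)+(p+1),
\end{equation*}
yielding $\deg_p(x)=L_p(x,x)-(p+1)$. For $p=0$ there is no $d_{-1}$, so by convention $L_0=d_0^*d_0$, and the first computation alone gives $L_0(x,x)=\deg_0(x)$ (consistent with the familiar $L_0=B-A$ whose diagonal is the vertex degree). The only thing that requires care is this case split at $p=0$; the combinatorial content itself is routine once the row/column indexing of $d_p$ is spelled out.
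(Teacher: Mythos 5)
Your proof is correct and follows essentially the same route as the paper: split $L_p=d_p^*d_p+d_{p-1}d_{p-1}^*$, observe that the diagonal of one summand counts the $(p+1)$-simplices containing $x$ while the other contributes the $p+1$ codimension-one faces of $x$, and note that the second summand is absent when $p=0$. Your write-up is in fact slightly more careful than the paper's, which states the two diagonal contributions tersely (and with an indexing slip relative to its own Remark~7), whereas you make the row/column indexing of $d_p$ explicit so that the two sums are unambiguous.
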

\begin{proof}
Because $d_p$ has $p+1$ nonzero entries $1$ or $-1$ in each row $x$, we have $d_p^* d_p (x,x) = p+1$. 
The reason for the special situation $p=0$ is that $L_0$ is the only Laplacian which 
does not contain a second part $d_0 d_0^*$ because $d_0^*$ is zero on scalars: 
$$ L_0 = d_0^* d_0, \;  L_1= d_1^* d_1 + d_0 d_0^*,  \; L_2=d_2^* d_2 + d_2 d_2^*  \;  etc.  \; . $$
\end{proof}

{\bf Remarks.} \\
{\bf 1)} The case $p=0$ is special because $L_0$ only consists of $d_0^* d_0$ while 
$L_p$ with $p>0$ has two parts $d_{p-1} d_{p-1}^* + d_{p}^* d_{p}$. The degree formulas
actually count closed paths of length $2$ starting at $x$. For $p>0$, paths can 
also lower the dimension. For example, for $p=1$, where we start with an edge, then there are two paths
which start at the edge, chose a vertex and then get back to the edge. This explains the 
correction $p+1$. \\
{\bf 2)} For $p=1$, we can count the number of triangles attached to an edge $x$ 
with ${\rm deg}_1(x) = L_1(x,x)+2$. \\
{\bf 3)} A closed path interpretation of the diagonal entries will generalize the statement. 
Paths of length $2$ count adjacent simplices in $\G$. 

We can read off the total number $v_p$ of $p$-simplices in $G$ from the trace of $L_p$. This 
generalizes the Euler handshaking result that the sum of the degrees of a finite simple graph
is twice the number of edges: 

\begin{coro}[Handshaking]
$\tr(L_p) = (p+2) v_{p+1}$.
\end{coro}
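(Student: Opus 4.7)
The plan is to reduce to a double-count of face incidences via the degree formula of the preceding proposition. I would start by expanding $\tr(L_p) = \sum_{x \in \G_p} L_p(x,x)$ and substituting the diagonal expressions given there: for $p=0$ the formula reads $L_0(x,x) = \deg_0(x)$, while for $p > 0$ it reads $L_p(x,x) = \deg_p(x) + (p+1)$, the constant $(p+1)$ being the diagonal contribution of $d_{p-1}d_{p-1}^*$ (reflecting that every $p$-simplex, viewed as a $K_{p+1}$, has exactly $p+1$ faces of dimension $p-1$).

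The combinatorial heart of the argument is a handshaking-style double count. Consider the set of incidence pairs $(x,y)$ with $x \in \G_p$, $y \in \G_{p+1}$, and $x$ a face of $y$. Counting from the $p$-simplex side, each $x$ is contained in $\deg_p(x)$ simplices of dimension $p+1$, so the total is $\sum_{x \in \G_p} \deg_p(x)$. Counting from the $(p+1)$-simplex side, each $y$, being a $K_{p+2}$ subgraph, contains exactly $\binom{p+2}{p+1} = p+2$ faces in $\G_p$, so the total is $(p+2)\, v_{p+1}$. Equating the two counts gives the identity $\sum_{x \in \G_p} \deg_p(x) = (p+2)\, v_{p+1}$.

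For $p = 0$ this immediately produces $\tr(L_0) = \sum_v \deg_0(v) = 2 v_1 = (0+2)\, v_1$, which is precisely the classical Euler handshaking lemma generalized by the corollary. The main obstacle I anticipate lies in handling $p > 0$: the constant $(p+1)$ term in the degree formula contributes an additional $(p+1) v_p$ to $\tr(L_p)$, so the identity produced by the method is naturally $\tr(L_p) = (p+2) v_{p+1} + (p+1) v_p$, and the statement as written captures only the incidence piece $(p+2) v_{p+1}$ (equivalently, $\tr(d_p^* d_p) = (p+2) v_{p+1}$, with the $d_{p-1} d_{p-1}^*$ summand understood to be absorbed). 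The cleanest way to present the proof is therefore to isolate the double-counting identity as the substantive content, observe that it is exactly the content of the corollary in the $p=0$ case, and handle the correction from the lower-degree summand for $p>0$ explicitly.
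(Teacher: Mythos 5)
Your approach is the same as the paper's --- sum the degree formula of the preceding proposition over $\G_p$ and double-count incidences between $p$- and $(p+1)$-simplices --- but your bookkeeping is the correct one, and the discrepancy you flag at the end is real rather than an artifact of your method. Summing $\deg_p(x)=L_p(x,x)-(p+1)$ over the $v_p$ elements of $\G_p$ gives $\sum_x \deg_p(x)=\tr(L_p)-(p+1)v_p$, while the incidence count gives $\sum_x \deg_p(x)=(p+2)v_{p+1}$, so the identity is $\tr(L_p)=(p+2)v_{p+1}+(p+1)v_p$ for $p>0$ (and $\tr(L_0)=2v_1$). The paper's one-line proof contains two slips that conspire to produce the stated form: the sum of the degrees is written as $v_{p+1}$ instead of $(p+2)v_{p+1}$, and the correction term $(p+1)v_p$ is written as $(p+1)v_{p+1}$. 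The statement as printed fails already for the triangle, where $\tr(L_1)=9$ but $3v_2=3$; your corrected version gives $3v_2+2v_1=9$, and it is the version the paper itself uses two lines later in the displays $\tr(L_1+1)=3v_2+3v_1$ and $\tr(L_1-2)/3=v_2$. So the substantive content is exactly the double count you isolate, $\sum_{x\in\G_p}\deg_p(x)=\tr(d_p^*d_p)=(p+2)v_{p+1}$, and the clean fix is to state the corollary either in that form or as $\tr(L_p)=(p+2)v_{p+1}+(p+1)v_p$ for $p>0$.
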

\begin{proof} 
Summing up ${\rm deg}_p(x) = L_p(x,x)-(p+1)$ gives 
$v_{p+1} = {\rm tr}(L_p) - (p+1) v_{p+1}$. 
\end{proof} 

We write just $1$ for the identity matrix. The formula
$$ \str(L+1)=\chi(G) $$ 
follows from McKean-Singer because $\str(1)=\chi(G), \str(L)=0$. 
Combinatorically it is equivalent to a statement which 
we can verify directly and manifests in cancellations of traces: 
\begin{eqnarray*}
\tr(L_0+1) &=& 2 v_1 + v_0 \\
\tr(L_1+1) &=& 3 v_2 + 3 v_1 \\
\tr(L_2+1) &=& 4 v_3 + 4 v_2 \\
           &\dots&  \; .
\end{eqnarray*}
Adding this up gives 
$\str(L+1) = \tr(L_0+1)-\tr(L_1+1) + \dots = v_0-v_1+v_2-v_3 + \dots = \chi(G)$.  \\

An other consequence is
\begin{eqnarray*}
\tr(L_0)/2   &=& v_1  \\
\tr(L_1-2)/3 &=& v_2 \\
\tr(L_2-3)/4 &=& v_3  \\
             &\dots&  \; . 
\end{eqnarray*}

{\bf Remarks.} \\
{\bf 1)} Each identity $\str(L^k+1) = \chi(G)$ produces 
some "curvatures" on the super graph $\G$ satisfying 
$$ \chi(G) = \sum_{x \in \G} \kappa(x) $$
the case $k=0$ being trivial giving $\kappa(x) = (-1)^{dim(x)}$ for which 
Gauss-Bonnet is the definition of the Euler characteristic and where 
$k=1$ is the case just discussed. \\

While the adjacency matrix $A$ of a graph has $\tr(A^k)$ as the
number of closed paths in $G$ of length $k$, the interpretation of 
$\tr(L_0^k) = \tr((B-A)^k)$ becomes only obvious when looking at it in more generally 
when looking the full Laplacian $L$ as we do here. Instead of finding an interpretation
where we add loops to the vertices, it is more natural to look at paths in $\G$: 

\begin{defn}
A {\bf path} in $\G = \bigcup \G_k$ as a sequence of simplices
$x_k \in \G$ such that either $x_k$ is either strictly contained in $x_{k+1}$ or that 
$x_{k+1}$ is strictly contained in $x_k$ and a path starting in $\G_k$ can additionally to 
$\G_k$ only visit one of the neighboring spaces $\G_{k+1}$ or $\G_{k-1}$ 
along the entire trajectory. 
\end{defn}

The fact that this random walk on $\G$ can not visit three different dimension-sectors 
$$  \G_{k-1},\G_{k},\G_{k+1}  $$
is a consequence of the identities $d^2=(d^*)^2=0$; a path visiting three different sectors
would have cases, where $d$ or $d^*$ appears in a pair in the expansion of $(d+d^*)^{2k}$.
Algebraically it manifests itself in the fact that $L^n$ is always is a block matrix 
for which each block $L_k^n$ leaves the subspace $\Omega_k$ of $k$-forms invariant. \\

{\bf Examples.} \\
{\bf 1)} For a graph without triangles, a path starting at a vertex $v_0$ is a sequence
$v_0,e_1,v_1,e_2,v_2 \dots $. Every path in $\G$ of length $2n$ corresponds to a path
of length $n$ in $G$.  \\
{\bf 2)} For a triangular graph $G$, there are three closed paths of length $2$ starting
an edge $e=(v_1,v_2)$. The first path is $e,v_1,e$, the second $e,v_2,e$ and the third
is $e,t,e$ where $t$ is the triangle. \\
{\bf 3)} Again for the triangle, there are $6$ closed paths of length $4$ starting at a
vertex: four paths crossing two edges $v,e_i,v,e_j,v$ and two paths crossing the same edge twice 
$v,e_i,v_1,e_i,v$. There are $9$ paths of length $4$ starting at an edge. There are 
four paths of the form $e,v_j,e,v_k,e$ and four paths of the form $e,v_i,e_j,v_i,e$
and one path $e,t,e,t,e$. 

\begin{propo}[Random walk in $\G$]
The integer $D^k_{xy}$ is the number of paths of length $k$ in $\G$ starting at a simplex $x$
and ending at a simplex $y$. The trace $tr(D^{2k})$ is the total number of closed paths in 
$\G$ which have length $2k$.  
\end{propo}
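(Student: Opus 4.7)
The plan is to expand $D^k = (d+d^*)^k$ into a sum of monomials in $d$ and $d^*$ and identify each surviving term with a path in $\G$ in the sense just defined.

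First I would use the nilpotence $d^2 = 0$ and $(d^*)^2 = 0$: in the binomial expansion of $(d+d^*)^k$, every monomial containing two adjacent equal letters vanishes as an operator, so only the two strictly alternating words $dd^*dd^*\cdots$ and $d^*dd^*d\cdots$ contribute. This is exactly the algebraic counterpart of the two-level restriction in the path definition: acting on $\Omega_j$, the word $(d^*d)^m$ passes through $\G_{j+1}$ at odd steps and returns to $\G_j$ at even steps, while $(dd^*)^m$ alternates between $\G_j$ and $\G_{j-1}$; no word ever visits three consecutive dimension sectors. This is also why $L^m_j$ splits cleanly as $(d_j^*d_j)^m + (d_{j-1}d_{j-1}^*)^m$ with no cross terms.

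Second I would unfold each surviving alternating operator in matrix coordinates. An entry such as $((d^*d)^m)_{xy}$ becomes a sum $\sum D_{x, x_1} D_{x_1, x_2} \cdots D_{x_{2m-1}, y}$ over all intermediate sequences of simplices whose dimensions follow the alternating pattern, each term being nonzero precisely when consecutive simplices satisfy the codimension-one containment. Each nonzero summand records a length-$k$ path from $x$ to $y$ in the excerpt's sense and contributes the $\pm 1$ product $\prod_i D_{x_{i-1}, x_i}$. Setting $x = y$ and summing gives $\tr(D^{2k})$ as a signed tally over closed paths of length $2k$.

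The main obstacle is the sign analysis required to turn this $\pm 1$-weighted expression into an honest enumeration: one must verify that every closed path contributes $+1$. The natural line of attack is to use $D_{x,y} D_{y,x} = D_{x,y}^2 = 1$ on incident pairs and pair each ``up'' step with its matching ``down'' step, then argue by an inductive cancellation of adjacent back-and-forth segments that every closed walk reduces to a product of such squares. Making this reduction uniform for all closed walks, including those that traverse a nontrivial cycle in the 1-skeleton before closing up, is the delicate ingredient; once it is in place, the combinatorial claim for both $D^k_{xy}$ and $\tr(D^{2k})$ follows at once from the expansion and two-level restriction established above.
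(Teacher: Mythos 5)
Your first two steps --- expanding $(d+d^*)^k$, killing every word with a repeated letter via $d^2=(d^*)^2=0$, and reading the surviving alternating words as walks confined to two adjacent dimension sectors --- are exactly the paper's (very terse) proof, and they are correct. The difference is that you explicitly flag the sign analysis as ``the main obstacle'' and leave it open, whereas the paper silently ignores it. Unfortunately the step you leave open cannot be completed, because the statement as written is false once signs are taken literally; your instinct that the walks winding around a cycle are the problem is precisely right, and no pairing of ``up'' steps with ``down'' steps will rescue them. Already for $k=2$ and $x\neq y$ adjacent vertices one has $D^2_{xy}=(L_0)_{xy}=-1$, while there is exactly one length-$2$ path $x,\{x,y\},y$ in $\G$. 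For the trace claim, note that for an edge $e$ with distinct endpoints $u\neq w$ one always has $d(e,u)\,d(e,w)=-1$ (the two incidence signs of an edge are opposite, whatever orientation is chosen), while $d(e,u)^2=+1$; hence a closed walk $v_0,e_1,v_1,\dots,e_m,v_m=v_0$ in $\G_0\cup\G_1$ contributes $(-1)^{\#\{i:\,v_{i-1}\neq v_i\}}$. Backtracking segments contribute $+1$ and do cancel as you propose, but a walk that traverses an odd cycle of $G$ contributes $-1$. Concretely, on the triangle the walk $v_1,e_{12},v_2,e_{23},v_3,e_{13},v_1$ has weight $(-1)^3=-1$; one computes $(L_0^3)_{v_1v_1}=18$ while the number of closed paths of length $6$ at $v_1$ is $22$, the discrepancy $4$ coming from the two orientations of the triangle circuit.

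The honest conclusion of your argument is therefore: $D^k_{xy}$ is a \emph{signed} sum over paths, each path weighted by $\prod_i D_{x_{i-1}x_i}\in\{\pm1\}$, and the unsigned count is $(|D|^k)_{xy}$ where $|D|$ is the entrywise absolute value of $D$. The proposition as stated does hold for the diagonal entries when $k\leq 2$ (every closed walk of length $2$ or $4$ in $\G$ either backtracks or shuttles inside a single incidence pair, so all weights are $+1$ --- which is why the paper's worked example with $\tr(D^4)$ on the triangle checks out), and it holds for all $k$ on bipartite-type situations where no odd circuit can be traversed. The corollary drawn from it, $\str(D^{2k})=0$, is of course still true, but it equates signed counts rather than numbers of closed paths. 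So: same route as the paper, one genuine extra insight on your part, and the remaining gap is not a gap in the proof but in the statement.
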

\begin{proof} 
We expand $(d+d^*)^k$. For odd $k$ we have expressions of the form $d d^* d \cdots d d^*$
or $d^* d \cdots d^* d$. For even $k$, we have expressions of the form $d d^* \cdots d^*$ or
$d^* d \cdots d$.  The second statement follows from summing over $x$. 
\end{proof} 

As a consequence of the McKean-Singer theorem we have the following corollary
which is a priori not so obvious because we do not assume any symmetry for the graph. 

\begin{coro}
Let $G$ be an arbitrary finite simple graph. 
The number of closed paths in $\G$ of length $2k$ starting at even dimensional simplices
is equal to the number of closed paths of length $2k$ starting at odd dimensional simplices.  
\end{coro}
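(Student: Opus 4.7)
The plan is to deduce the corollary directly from the McKean-Singer identity by expanding the heat kernel as a power series in $t$. Since the theorem asserts $\str(e^{-tL}) = \chi(G)$ for every complex $t$, the right-hand side is constant in $t$. Expanding the matrix exponential,
\begin{equation*}
\str(e^{-tL}) = \sum_{k \geq 0} \frac{(-t)^k}{k!}\, \str(L^k) = \chi(G),
\end{equation*}
so comparing coefficients of $t^k$ on the two sides forces $\str(L^k) = 0$ for every $k \geq 1$. Since $L = D^2$, this means $\str(D^{2k}) = 0$ for all $k \geq 1$.

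Next, I would translate $\str(D^{2k}) = 0$ into the combinatorial claim. Because $D$ swaps $\Omega_b$ and $\Omega_f$, the even power $D^{2k}$ preserves the bosonic/fermionic splitting, so its super trace decomposes as
\begin{equation*}
\str(D^{2k}) = \sum_{\dim(x)\text{ even}} (D^{2k})_{xx} \;-\; \sum_{\dim(x)\text{ odd}} (D^{2k})_{xx}.
\end{equation*}
The previous proposition identifies $(D^{2k})_{xx}$ as the number of closed paths of length $2k$ in $\G$ starting (and ending) at the simplex $x$. Summing this identification over each parity class, the vanishing of $\str(D^{2k})$ is exactly the statement that the total number of closed paths of length $2k$ starting at even-dimensional simplices equals the total number starting at odd-dimensional simplices.

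The only tiny point to verify is the validity of identifying Taylor coefficients: this follows because $\str(e^{-tL})$ is an entire function of $t$ (a finite-dimensional matrix exponential), so equality to the constant $\chi(G)$ on all of $\C$ forces every positive-order coefficient to vanish. Alternatively, one may apply McKean-Singer with $f(x) = -tx^{2k}$ and differentiate in $t$ at $t=0$, reaching the same conclusion. I do not foresee a serious obstacle: the entire proof is a bookkeeping exercise once one recognizes that McKean-Singer supplies not just one identity but an infinite family, one for each power of $L$, and that the path-counting interpretation of $\tr(D^{2k})$ splits naturally along the grading used in the super trace.
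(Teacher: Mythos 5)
Your proof is correct and follows essentially the same route as the paper: both reduce the claim to $\str(D^{2k})=\str(L^{k})=0$, a direct consequence of McKean-Singer, and then invoke the path-counting interpretation of the diagonal entries of $D^{2k}$ from the preceding proposition. Your Taylor-coefficient extraction is just an explicit way of filling in the paper's one-line appeal to the theorem, so there is nothing substantive to add.
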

\begin{proof} 
$\str(D^{2k}) = \tr(L^k|\Omega_b) - \tr(L^k|\Omega_f) = 0$.
\end{proof}

For example, on a triangle, there are $6$ closed paths of length $4$ starting at a vertex
and $9$ closed paths of length $4$ starting at an edge and $9$ closed paths of length $4$
starting at a triangle. There are $3 \cdot 6 + 9 = 27$ paths starting at an even dimensional 
simplex (vertex or triangle) and  $27=3 \cdot 9$ paths starting at an odd dimensional 
simplex (edge). 

\section{Curvature}

Finally, we want to write the curvature $K(x)$ of a graph using the operator $D$. 
For a vertex $x \in V$, denote by $V_k(x)$ the number of $K_{k+1}$ graphs in the 
unit sphere $S(x)$. The curvature at a vertex $x$ is defined as
$$ K(x) = 1+\sum_{k=1}^{\infty} (-1)^k \frac{V_{k-1}(x)}{k+1} \; .  $$
It satisfies the {\bf Gauss-Bonnet theorem}  \cite{cherngaussbonnet}
$$ \sum_{x \in V} K(x) = \chi(G) \; ,  $$
an identity which holds for any finite simple graph.  The result becomes more interesting and deeper,
if more gometric structure on the graph is assumed. For example, for geometric graphs, where the unit spheres
are discrete spheres of fixed dimension with Euler characteristic like in the continuum, 
then $K(x)=0$ for odd dimensional graphs. This result \cite{indexformula} 
relies on discrete integral geometric methods and in particular on \cite{indexexpectation} which 
assures that curvature is the expectation of the index of functions. 

\begin{defn}
Denote by $L_p(x)$ the operator $L_p$ 
restricted to the unit sphere $S(x)$. It can be thought of as an analogue of a {\bf signature} for 
differential operators.  Define the linear operator 
$A_p \to A_p'(x) = A_{p-1}(x)/(p+1)$ so that $A_p''(x) = A_{p-2}(x)/(p (p+1))$. 
\end{defn}

\begin{coro}
The curvature $K(x)$ at a vertex $x$ of a graph satisfies 
$$  K(x) = \str(L(x)'') \; . $$
\end{coro}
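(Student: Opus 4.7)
My plan is to verify the identity by unpacking both sides into their combinatorial content in terms of the simplex counts $V_k(x)=v_k(S(x))$ and matching coefficients.

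First I handle the right hand side. Iterating the defining relation $A'_p=A_{p-1}/(p+1)$ gives the asserted formula $L(x)''_p = L_{p-2}(S(x))/\bigl(p(p+1)\bigr)$, so that
\[
\str\bigl(L(x)''\bigr) \;=\; \sum_{p\ge 2}(-1)^p\,\frac{\tr\bigl(L_{p-2}(S(x))\bigr)}{p(p+1)}.
\]
Setting $q=p-2$ shifts this to a sum indexed by $q\ge 0$ with coefficient $(-1)^q/\bigl((q+2)(q+3)\bigr)$.

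Next I would invoke the Handshaking Corollary applied to the subgraph $S(x)$, which expresses $\tr\bigl(L_q(S(x))\bigr)$ as a combination of $V_q(x)$ and $V_{q+1}(x)$ (with the special case $\tr\bigl(L_0(S(x))\bigr)=2V_1(x)$). Substituting these into the above sum converts $\str(L(x)'')$ into an explicit rational linear combination of the numbers $V_j(x)$.

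Then I would collect the coefficient of each $V_j(x)$. Because the handshaking expansion $\tr(L_q)=(q+2)V_{q+1}+(q+1)V_q$ makes every $V_j(x)$ appear in exactly two consecutive terms of the reindexed sum, its coefficient in $\str(L(x)'')$ comes from a telescoping difference which I would simplify using partial fractions. The result should be exactly the coefficient $(-1)^{j+1}/(j+2)$ of $V_j(x)$ in
\[
K(x) \;=\; 1 + \sum_{k=1}^{\infty}(-1)^k\,\frac{V_{k-1}(x)}{k+1}
\;=\; 1 - \sum_{j=0}^{\infty}(-1)^j\,\frac{V_j(x)}{j+2}.
\]

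The subtlety I expect to have to work through carefully is the treatment of low indices. The operator $L_p''$ is only defined for $p\ge 2$, and the handshaking formula for $\tr(L_0)$ has a slightly different form than for $\tr(L_q)$ with $q\ge 1$. Matching the constant term $1$ in $K(x)$ against the boundary contribution of the super trace requires using the convention $V_{-1}(x)=1$ (an ``empty simplex'' in $S(x)$) and checking that this boundary contribution is exactly consistent with the low-index terms produced by the handshaking formula. Once this bookkeeping is in place, the identity reduces to a routine verification of the partial-fraction identity $\tfrac{j+1}{(j+2)(j+3)}-\tfrac{j+1}{(j+1)(j+2)}=-\tfrac{1}{j+2}$ (suitably signed), after which the corollary follows.
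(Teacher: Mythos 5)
Your strategy---push the Handshaking identity into the unit sphere $S(x)$, reindex, and match coefficients of the $V_j(x)$---is essentially the paper's route, but your execution contains a step that fails: the coefficient match does not close. The partial-fraction identity you invoke is false:
$$ \frac{j+1}{(j+2)(j+3)}-\frac{j+1}{(j+1)(j+2)}=\frac{(j+1)-(j+3)}{(j+2)(j+3)}=-\frac{2}{(j+2)(j+3)} \; , $$
which equals $-\frac{1}{j+2}$ only for $j=-1$. Concretely, with the two-term handshaking $\tr(L_q)=(q+2)V_{q+1}+(q+1)V_q$ that you use, the coefficient of $V_j(x)$ (for $j\geq 1$) in $\sum_{q\geq 0}(-1)^q\tr(L_q(S(x)))/((q+2)(q+3))$ comes out to $(-1)^{j-1}\cdot 2/((j+2)(j+3))$ rather than the required $(-1)^{j-1}/(j+2)$, and the terms $1$ and $-V_0(x)/2$ of $K(x)$ acquire no counterpart at all. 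A sanity check on the triangle, where $S(x)=K_2$ and $\tr(L_0(S(x)))=\tr(L_1(S(x)))=2$, gives $\sum_{p\geq 2}(-1)^p\tr(L_{p-2}(S(x)))/(p(p+1))=\frac13-\frac16=\frac16$ while $K(x)=\frac13$; no convention for $V_{-1}$ repairs this.

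The paper's own proof is a one-line substitution resting on the Handshaking corollary in its \emph{one-term} form $\tr(L_p(x))=(p+2)V_{p+1}(x)$, i.e.\ $V_{p-1}(x)=\tr(L_{p-2}(x))/p$, so that each summand $V_{p-1}/(p+1)$ of the curvature is literally $\tr(L_p''(x))$ and no telescoping is needed. Note, however, that this one-term form is inconsistent with the two-term version you use---and yours is the one supported by the diagonal entries $L_p(y,y)=\deg_p(y)+(p+1)$ and by the paper's own displayed identities $\tr(L_1+1)=3v_2+3v_1$, etc. So the tension you anticipate having to ``work through carefully at low indices'' is not a bookkeeping artifact that a convention for $V_{-1}$ will dissolve: either the handshaking input must be taken in the one-term form (in which case the substitution is immediate and your telescoping machinery is unnecessary), or the identity as stated does not verify. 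You must resolve which expression for $\tr(L_q(S(x)))$ is correct before the proof can be completed; as written, the proposal's final verification step is arithmetically wrong.
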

\begin{proof}
From 
$\tr(L_p(x)) = (p+2) V_{p+1}$ we get
$V_{p-1}(x) = \tr(L_{p-2}(x))/p$ and so 
$$  K(x) = \sum_{p=0}^{\infty} \frac{V_{p-1}}{p+1} = \sum \frac{\tr(L_{p-2})}{p (p+1)} = \str(L''(x)) \; . $$
\end{proof}

We have now also in the discrete an operator theoretical interpretation of Gauss-Bonnet-Chern
theorem in the same way as in the continuum, where the 
{\bf Atiyah-Singer index theorem} provides this interpretation. \\

If we restrict $D$ to the even part $D: \Omega_b \to \Omega_f$,
the Euler characteristic is the {\bf analytic index} $\ker(D) - \ker(D^*)$.
The {\bf topological index} of $D$ is $\sum_{x \in X} \str(D^2(x)'')$. 
As in the continuum, the discrete Gauss-Bonnet theorem is an example of an index theorem. 

\section{Dirac isospectral graphs} 

In this section we describe a general way to get Dirac isospectral graphs and 
give example of an isospectral pairs. In the next section, among the examples, an other 
example is given. There is an analogue quest in the continuum for isospectral nonisometric metrics for
all differential forms. As mentioned in \cite{BergerPanorama}, there are various
examples known. Milnor's examples with flat tori lifts to isospectrality on forms for example.

\begin{figure}[H]
\scalebox{0.35}{\includegraphics{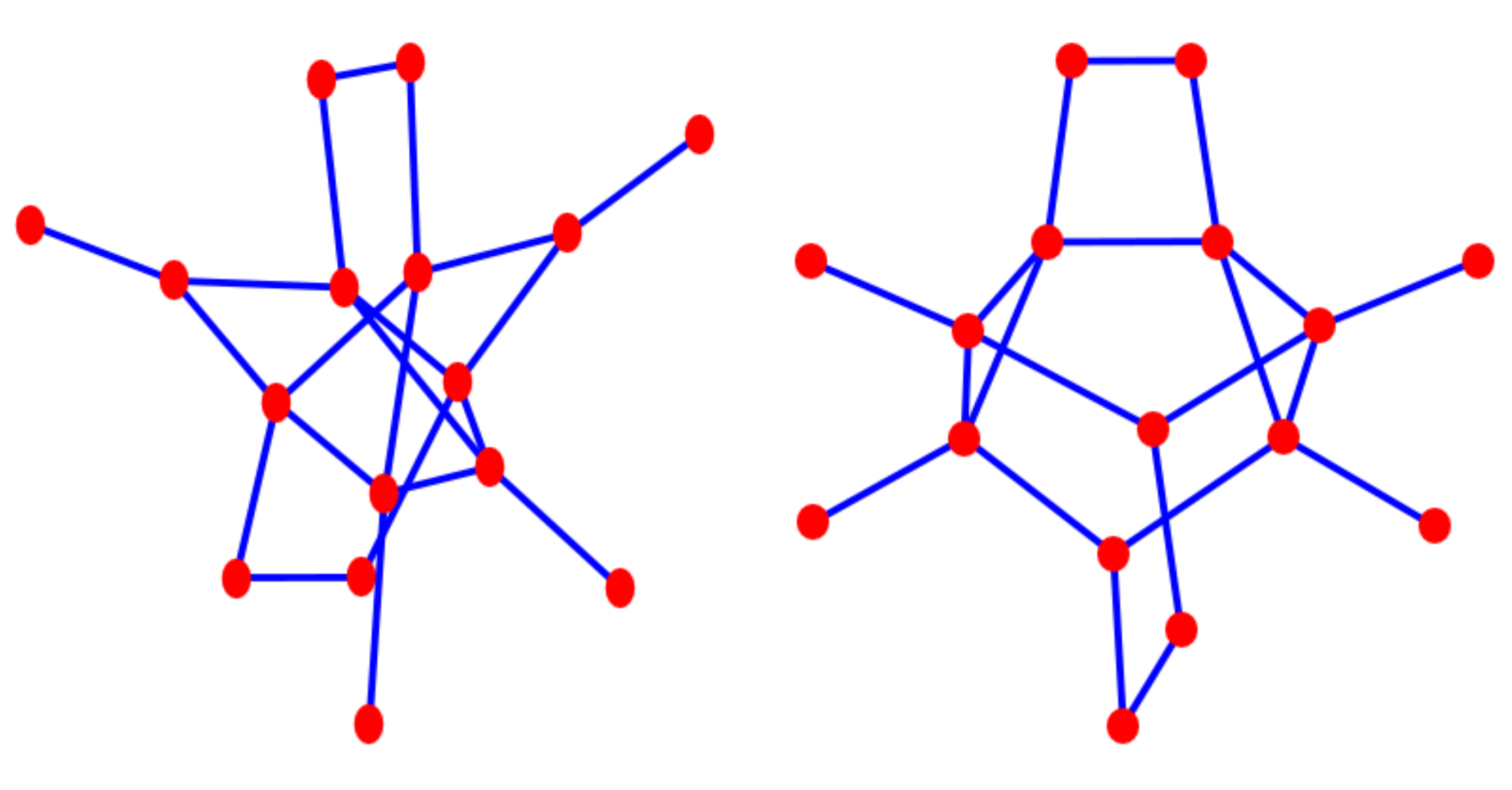}}
\caption{
The Halbeisen-Hungerb\"uhler pair is an example of a pair
of isospectral graphs for the Dirac operator $D$. Both graphs have $v_0=16$ vertices,
$v_1=21$ edges and $v_2=2$ triangles so that $\chi(G)=v_0-v_1+v_2=-3$.
The Betti numbers are $b_0=1,b_1=4,b_2=0$. Since the spectrum of $L_2$ is $\{3,3 \}$
for both graphs and Sunada methods in \cite{HalbeisenHungerbuehler} have shown
that the Laplacians $L_0(G_i)$ on functions are isospectral,
the McKean-Singer theorem in the proof of Proposition~\ref{lifting}
implies that also $L_1$ are isospectral so that the two graphs are isospectral for $D$.
\label{halbeisenhungerbuehler}
}
\end{figure}

\begin{propo}
Two connected finite simple graphs $G_1,G_2$ which have the same number of vertices
and edges and which are triangle free and isospectral with respect to $L_0$
are isospectral with respect to $D$. 
\label{lifting}
\end{propo}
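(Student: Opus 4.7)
The plan is to exploit the fact that triangle-free graphs have a very simple exterior bundle, so the Dirac spectrum decomposes into only two pieces that are tied together by McKean-Singer.

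First I would reduce to showing $L_0$ and $L_1$ are both isospectral. Since the eigenvalues of $D$ are $\pm\sqrt{\mu}$ for eigenvalues $\mu$ of $L=D^2$, and because both graphs are triangle-free so that $\Omega_k=0$ for $k\geq 2$, the Dirac operator acts only on $\Omega_0\oplus\Omega_1$ and $L$ splits as the direct sum $L_0\oplus L_1$. Hence isospectrality of $D$ for $G_1,G_2$ is equivalent to simultaneous isospectrality of $L_0$ and $L_1$. The $L_0$ part is given by hypothesis, so the whole game is to pin down $L_1$.

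Second, I would apply the McKean-Singer pairing to each graph separately. In the triangle-free setting $\Omega_b=\Omega_0$ and $\Omega_f=\Omega_1$, so the supersymmetric bijection $D\colon \Omega_b^+\to\Omega_f^+$ from the proof of the McKean-Singer theorem says that the nonzero eigenvalues of $L_0(G_i)$ and $L_1(G_i)$ coincide with multiplicity. Combined with $\sigma(L_0(G_1))=\sigma(L_0(G_2))$, this forces the nonzero parts of $\sigma(L_1(G_1))$ and $\sigma(L_1(G_2))$ to agree.

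Third, I would handle the zero eigenvalue of $L_1$ using Hodge and Euler-Poincar\'e. By the Hodge theorem, $\dim\ker L_1(G_i)=b_1(G_i)$. Since both graphs are connected, $b_0(G_i)=1$, and the Euler-Poincar\'e identity in the triangle-free case reads $v_0(G_i)-v_1(G_i)=b_0(G_i)-b_1(G_i)$. The hypothesis that $v_0,v_1$ agree for the two graphs then forces $b_1(G_1)=b_1(G_2)$, so the zero eigenspaces of $L_1$ have matching dimensions. Finally, since $\dim\Omega_1(G_i)=v_1(G_i)$ is the same for both graphs, matching nonzero eigenvalues together with matching kernel dimensions yields full isospectrality of $L_1$, hence of $L$ and thus of $D$.

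There is no real obstacle beyond assembling these pieces; the only point requiring care is the bookkeeping of kernel dimensions, and the key conceptual step is recognizing that McKean-Singer in this low-complex setting functions as a two-term ``lifting'' mechanism, converting information about $L_0$ into information about $L_1$ for free.
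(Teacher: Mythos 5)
Your proposal is correct and follows essentially the same route as the paper: McKean--Singer pairs the nonzero spectra of $L_0$ and $L_1$ in the triangle-free (two-sector) setting, and Hodge theory together with $\chi(G_i)=v_0-v_1=1-b_1$ pins down the kernel of $L_1$. The only cosmetic difference is that you spell out the final bookkeeping (equal $\dim\Omega_1=v_1$ plus equal kernels forces equal spectra of $L_1$) which the paper leaves implicit.
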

\begin{proof}
Since $\chi(G_i) = v_0-v_1 = b_0-b_1 = 1-b_1$ are the same, Hodge theory shows that
$L_1$ has $b_1$ zero eigenvalues in both cases. Because the graphs are connected,
$L_0$ have $b_0=1$ zero eigenvalues. The McKean-Singer theorem implies that the 
nonzero eigenvalues of $L_0$ match the nonzero eigenvalues of $L_1$. 
\end{proof}

This can be applied also in situations, where triangles are present and where the spectrum 
on $L_2$ is trivially equivalent. 
An example has been found in \cite{HalbeisenHungerbuehler}. 
That paper uses Sunada's techniques to construct isospectral simple graphs. 
We show that such examples can also be isospectral with respect to the Dirac operator. 

\begin{figure}[H]
\scalebox{0.35}{\includegraphics{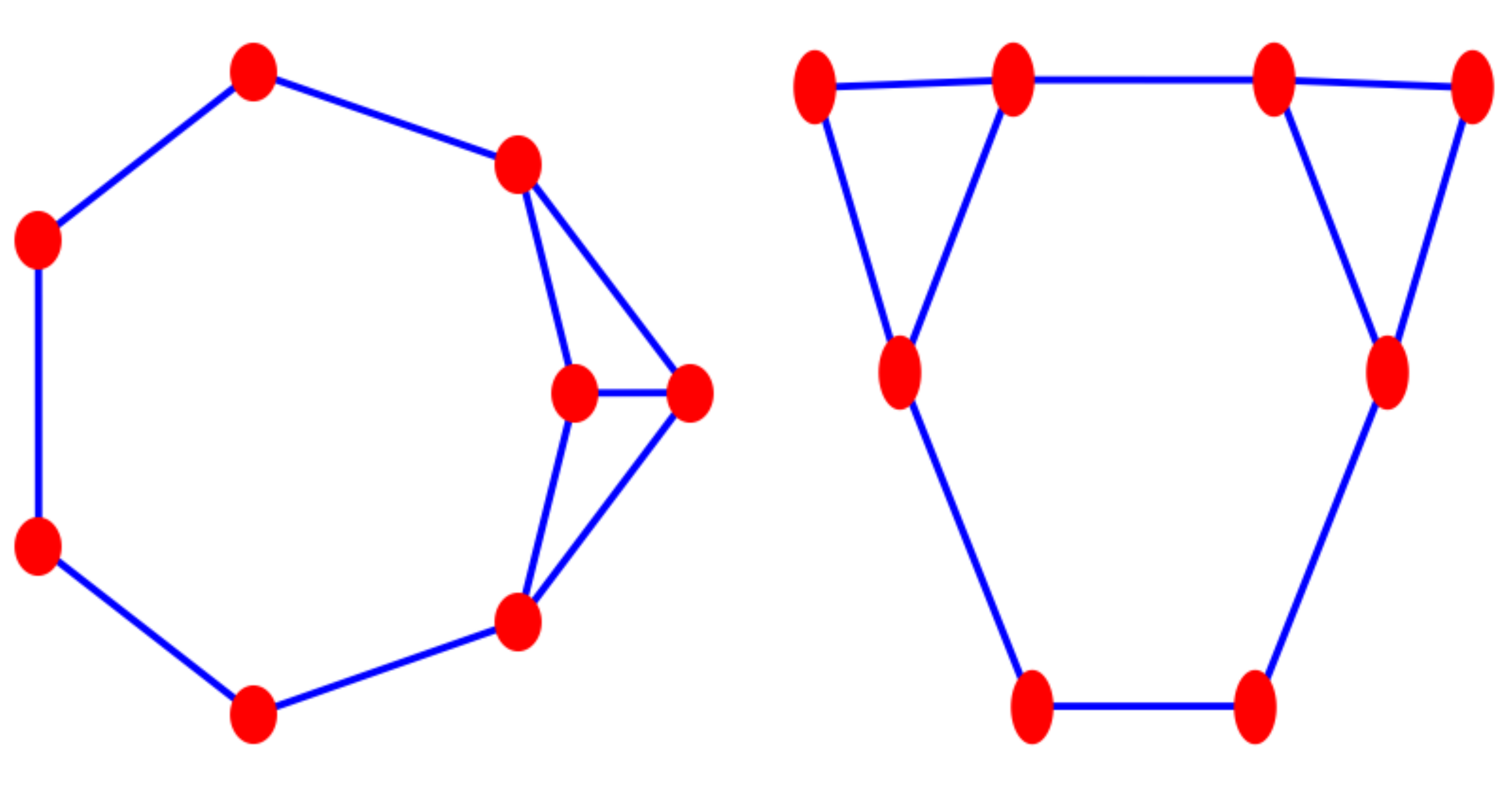}}
\caption{
An example of a cospectral pair for the Laplace operator $L_0$ given by 
Haemers-Spence \cite{HaemersSpence}. Both graphs have $v_0=8$ vertices,
$v_1=10$ edges and $v_2=2$ triangles so that $\chi(G)=v_0-v_1+v_2=0$.
The Betti numbers are $b_0=1,b_1=1,b_2=0$. While the spectra of $L_0$ agree,
the spectra of $L_2$ are not the same. For the left graph, the eigenvalues are 
$4,2$, for the right graph, the eigenvalues are $3$ and $3$. We would not even have
to compute the eigenvalues because $\tr(L_2^2)$, which has a combinatorial interpretation
as paths of length $4$ in $\G$ does not agree. We conclude from McKean-Singer
(without having to compute the eigenvalues) that also the eigenvalues of $L_1$ do not agree. 
\label{haemersspence}
}
\end{figure}

These examples are analogues to examples of manifolds given by Gornet \cite{BergerPanorama}
where isospectrality for functions and differential forms can be different. 

\begin{figure}[H]
\scalebox{0.35}{\includegraphics{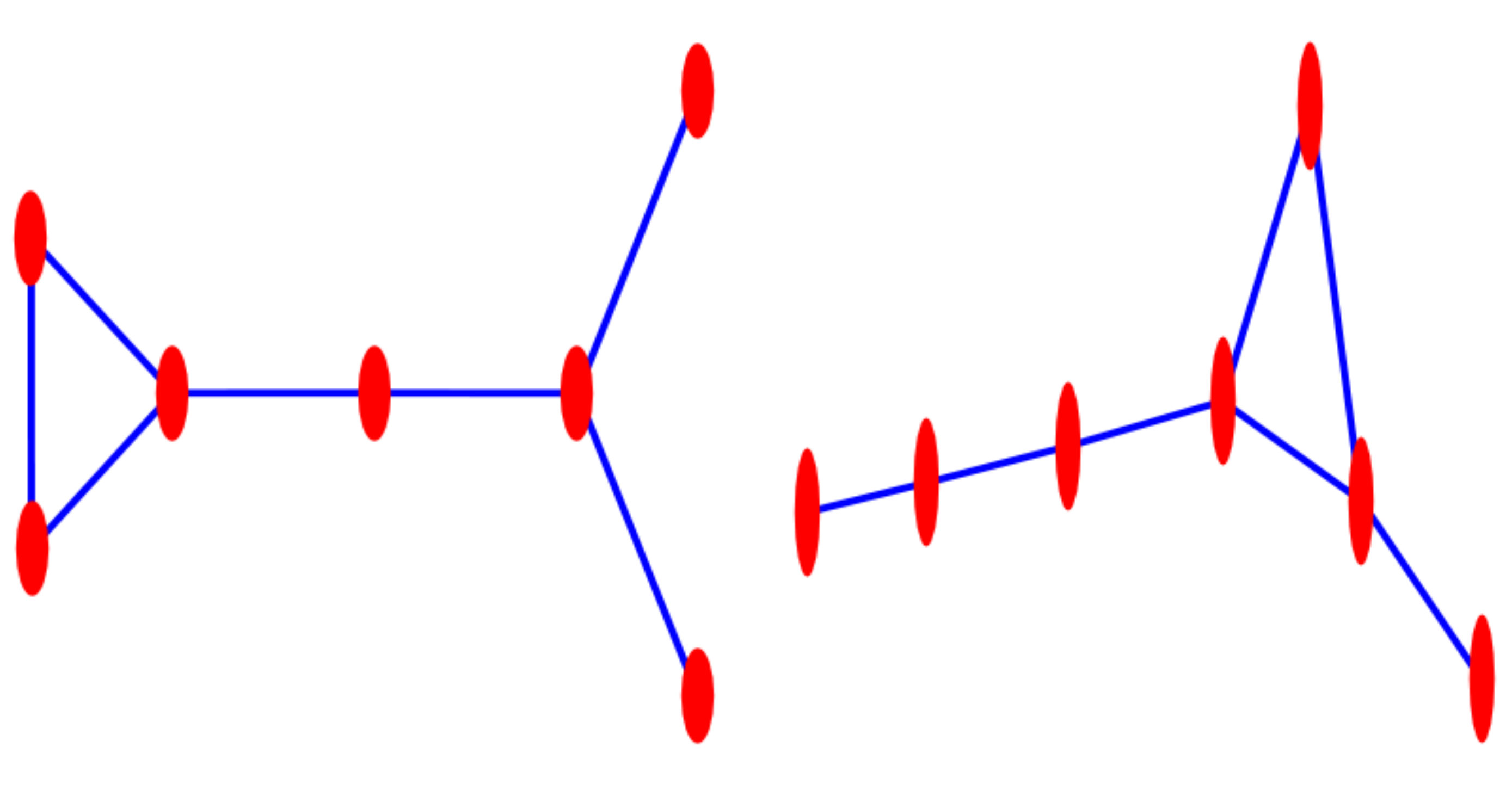}}
\caption{
Two isospectral graphs for $L_0$ found in \cite{Tan}. Both are
contractible and have Euler characteristic $1$. 
Since they have the same operator $L_2$, by McKean Singer 
also the spectra of $L_1$ are the same. The two graphs are
isospectral with respect to $D$. 
\label{tan}
}
\end{figure}

The rest of the article consists of examples. 

\vfill
\pagebreak

\section{Examples}

\subsection{Cyclic graph}
Let $G$ be the cyclic graph with $4$ elements. 

\begin{figure}[H]
\scalebox{0.35}{\includegraphics{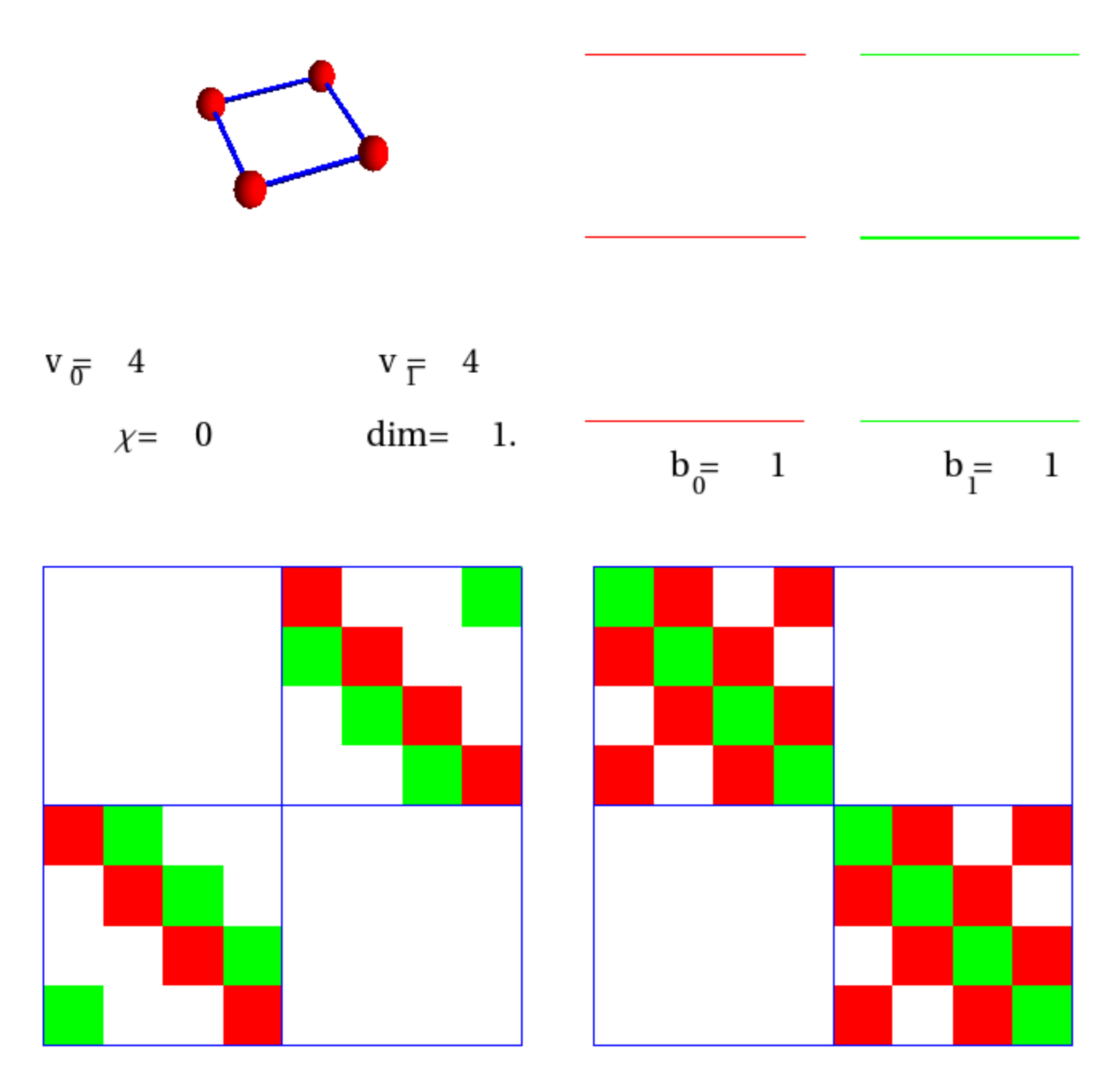}}
\caption{
\label{cyclic}
}
\end{figure}

The exterior derivative $d$
is a map from $\Omega_0=R^4 \to \Omega^1=R^4$. It defines $L_0=d^*d$. 
The operator $L_1: \Omega_1 \to \Omega_1 = d d^*$ is the same than $L_0$ and 
${\rm tr}(\exp(-t L_0) - {\rm tr}(\exp(-t L_1) ) = 0$ for all $t$. We have
$$ L_0 = d_0^* d_0 = \left[
                  \begin{array}{cccc}
                   2 & -1 & 0 & -1 \\
                   -1 & 2 & -1 & 0 \\
                   0 & -1 & 2 & -1 \\
                   -1 & 0 & -1 & 2 \\
                  \end{array}
                  \right],
   L_1 = d_0 d_0^* = \left[
                  \begin{array}{cccc}
                   2 & -1 & 0 & -1 \\
                   -1 & 2 & -1 & 0 \\
                   0 & -1 & 2 & -1 \\
                   -1 & 0 & -1 & 2 \\
                  \end{array}
                  \right] \; . $$
and so
\begin{tiny}
$$D=\left[
     \begin{array}{cccccccc}
      0 & 0 & 0 & 0 & -1 & 0 & 0 & 1 \\
      0 & 0 & 0 & 0 & 1 & -1 & 0 & 0 \\
      0 & 0 & 0 & 0 & 0 & 1 & -1 & 0 \\
      0 & 0 & 0 & 0 & 0 & 0 & 1 & -1 \\
      -1 & 1 & 0 & 0 & 0 & 0 & 0 & 0 \\
      0 & -1 & 1 & 0 & 0 & 0 & 0 & 0 \\
      0 & 0 & -1 & 1 & 0 & 0 & 0 & 0 \\
      1 & 0 & 0 & -1 & 0 & 0 & 0 & 0 \\
     \end{array}
     \right],
L=\left[
     \begin{array}{cccccccc}
      2 & -1 & 0 & -1 & 0 & 0 & 0 & 0 \\
      -1 & 2 & -1 & 0 & 0 & 0 & 0 & 0 \\
      0 & -1 & 2 & -1 & 0 & 0 & 0 & 0 \\
      -1 & 0 & -1 & 2 & 0 & 0 & 0 & 0 \\
      0 & 0 & 0 & 0 & 2 & -1 & 0 & -1 \\
      0 & 0 & 0 & 0 & -1 & 2 & -1 & 0 \\
      0 & 0 & 0 & 0 & 0 & -1 & 2 & -1 \\
      0 & 0 & 0 & 0 & -1 & 0 & -1 & 2 \\
     \end{array}
     \right]  \; . $$
\end{tiny}
The bosonic eigenvalues are $\{4,2,2,0 \; \}$, the fermionic eigenvalues are
$\{4,2,2,0 \; \}$.  The example generalizes to any cyclic graph $C_n$. 
The operators $L_0=L_1$ are Jacobi matrices $2-\tau-\tau^*$ which Fourier 
diagonalizes to the diagonal matrix with entries 
$\lambda_k = 2-2 \cos(2\pi k/n), k,1,\dots ,n$.

\vfill
\pagebreak

\subsection{Triangle}
Let $G$ be the triangle. 

\begin{figure}[H]
\scalebox{0.35}{\includegraphics{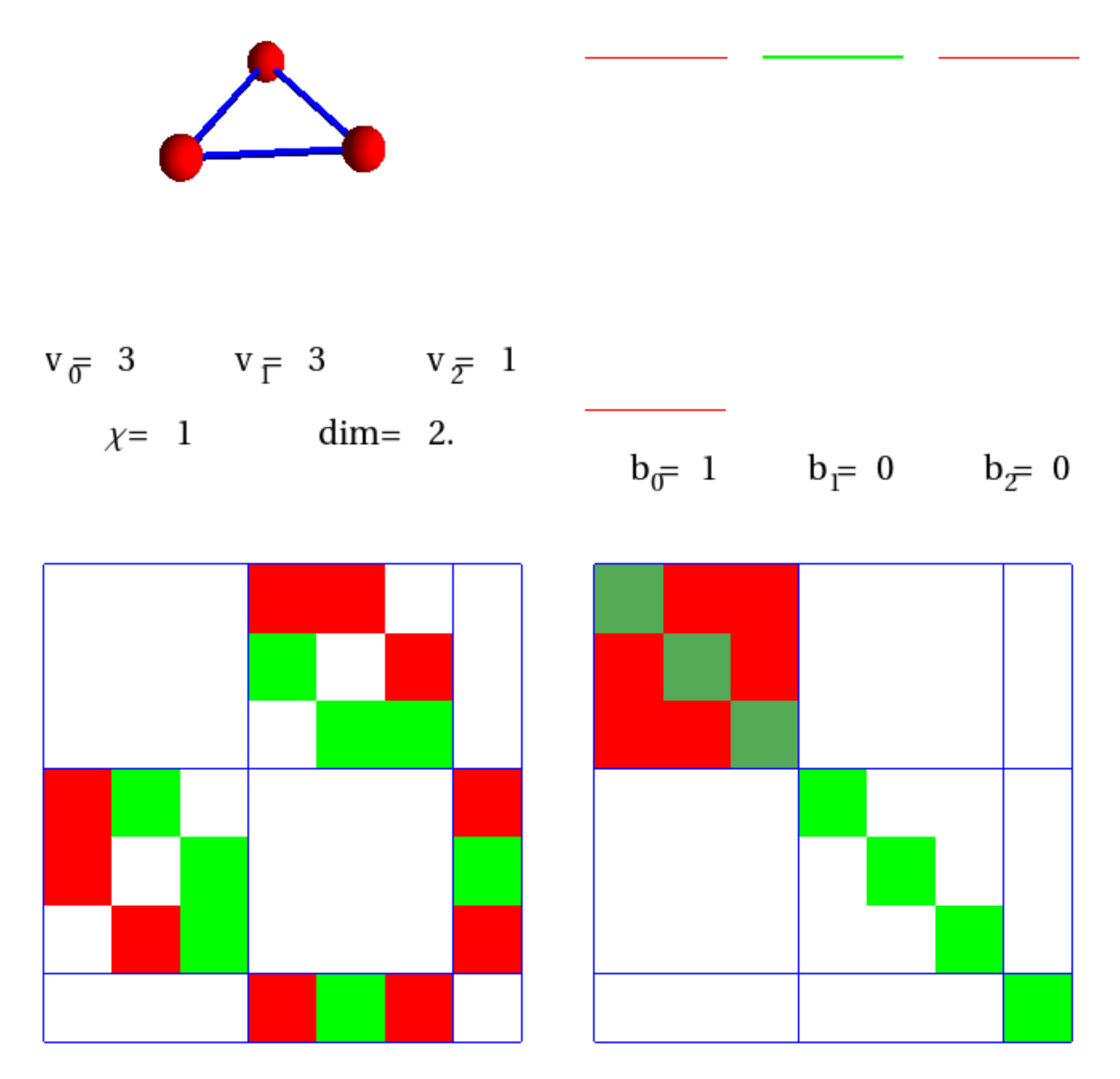}}
\caption{
\label{completegraph4}
}
\end{figure}

The exterior derivative $d: \Omega_0 \to \Omega_1$ is
$$ d_0 = \left[ \begin{array}{ccc|c}
                    1 & 2 & 3 &  \\ \hline
                   -1 & 1 & 0 & a\\
                   1 & 0 & -1 & b\\
                   0 &  1 & 1 & c \\
                  \end{array} \right], 
   d_1 =  \left[ \begin{array}{ccc}
                  -1 & 1 &-1 \\
                  \end{array} \right] \;   $$
and 
$$  L_0=d_0^* d_0 =  \left[ \begin{array}{ccc}
                  2 & -1 & -1 \\
                  -1 & 2 & -1 \\
                  -1 & -1 & 2
                 \end{array} \right], 
    L_1=d_1 d_1^* + d_0^* d_0 = 
                     \left[ \begin{array}{ccc}
                   3 & 0 & 0 \\
                   0 & 3 & 0 \\
                   0 & 0 & 3 \\
                  \end{array} \right] \; 
    L_2 = d_1 d_1^*   =  \left[ \begin{array}{c} 3 \\ \end{array} \right] \; . $$
The Dirac and Laplace operator is 
$$ D=\left[
     \begin{array}{ccccccc}
      0 & 0 & 0 & -1 & -1 & 0 & 0 \\
      0 & 0 & 0 & 1 & 0 & -1 & 0 \\
      0 & 0 & 0 & 0 & 1 & 1 & 0 \\
      -1 & 1 & 0 & 0 & 0 & 0 & -1 \\
      -1 & 0 & 1 & 0 & 0 & 0 & 1 \\
      0 & -1 & 1 & 0 & 0 & 0 & -1 \\
      0 & 0 & 0 & -1 & 1 & -1 & 0 \\
     \end{array}
     \right], \; 
  L=\left[
     \begin{array}{ccccccc}
      2 & -1 & -1 & 0 & 0 & 0 & 0 \\
      -1 & 2 & -1 & 0 & 0 & 0 & 0 \\
      -1 & -1 & 2 & 0 & 0 & 0 & 0 \\
      0 & 0 & 0 & 3 & 0 & 0 & 0 \\
      0 & 0 & 0 & 0 & 3 & 0 & 0 \\
      0 & 0 & 0 & 0 & 0 & 3 & 0 \\
      0 & 0 & 0 & 0 & 0 & 0 & 3 \\
     \end{array}
     \right] \; .  $$
The later has the bosonic eigenvalues $\{3,3,0,3 \; \}$ and
the fermionic eigenvalues $\{3,3,3 \;\}$.  We have 
$$ {\rm tr}(e^{-t L_0}) = e^{-3 t} \left(e^{3 t}+2\right), 
   {\rm tr}(e^{-t L_1}) = 3 e^{-3 t}, 
   {\rm tr}(e^{-t L_2}) = e^{-3 t} ; $$
and so $\str(e^{t L}) = 1$.  \\

\vfill
\pagebreak

\subsection{Tetrahedron}
For the tetrahedron $K_4$, the exterior derivative $d_0$ 
is a map from $\Omega_0=R^4 \to \Omega^1 = R^6$. 

\begin{figure}[H]
\scalebox{0.25}{\includegraphics{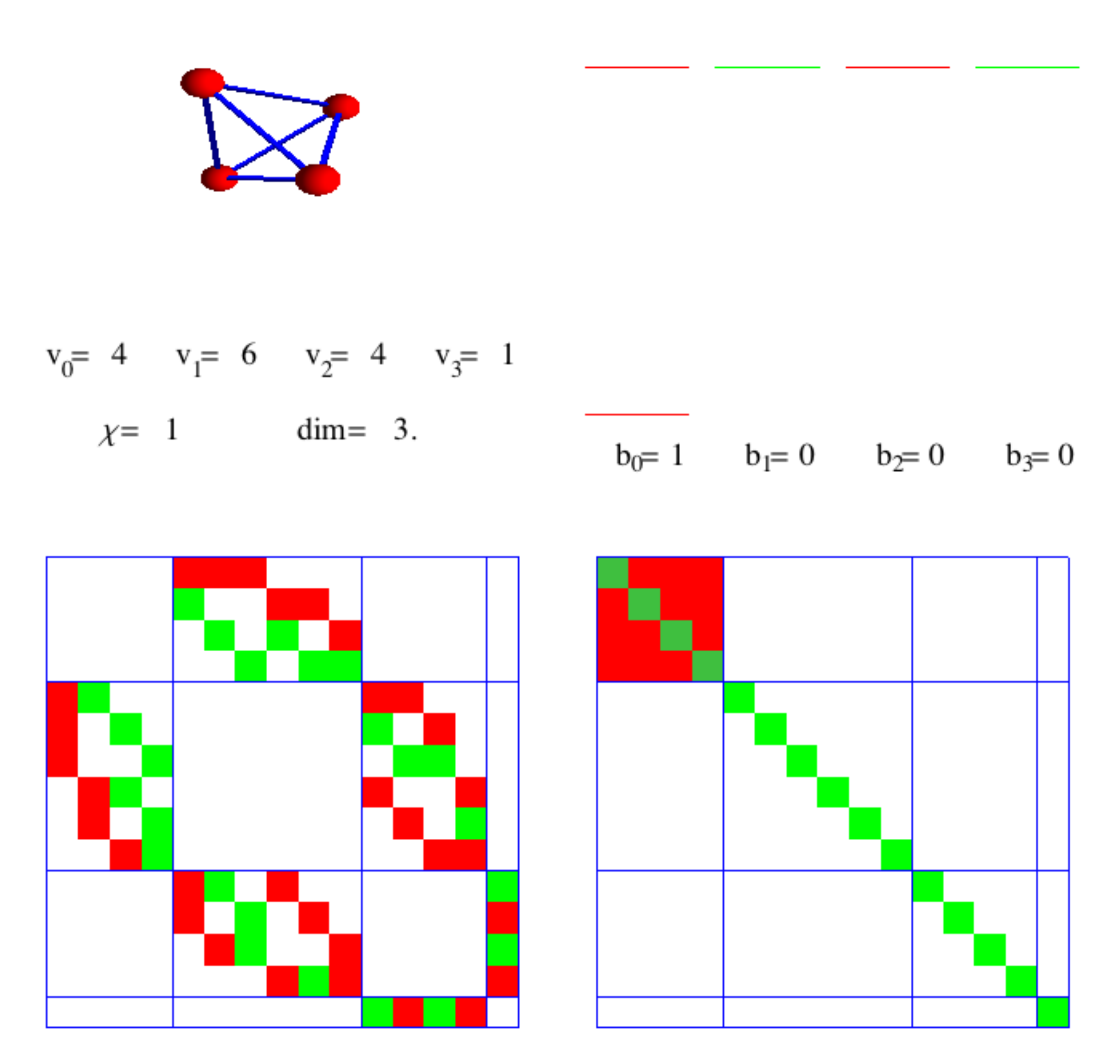}}
\scalebox{0.20}{\includegraphics{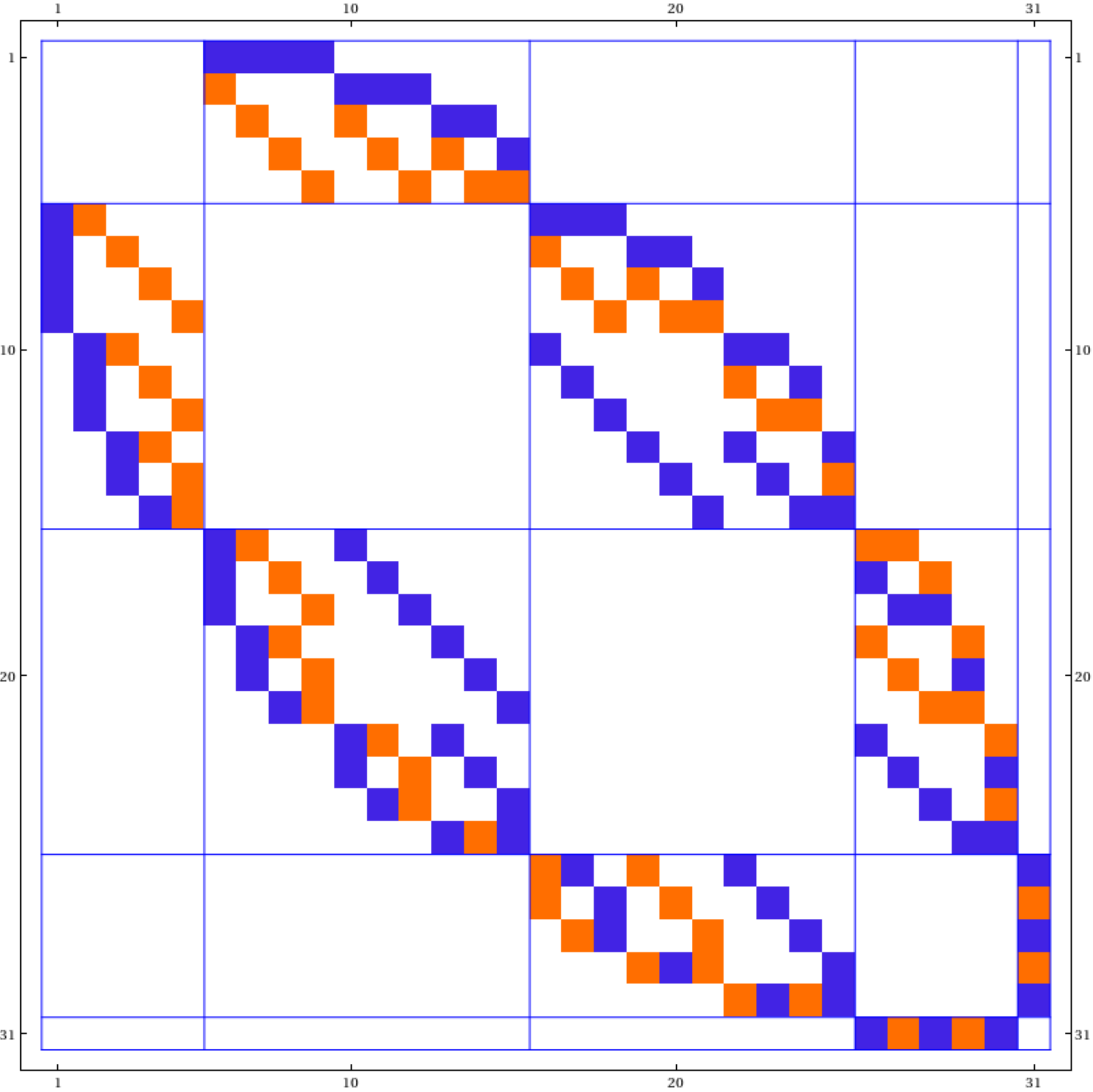}}
\caption{
The Dirac operator of the tetrahedron $K_4$ is shown to the left. 
To the right, we see the Dirac matrix of the hyper 
tetrahedron $K_5$. 
\label{completegraph4}
}
\end{figure}

$d_0=\left[ \begin{array}{cccc}
                   -1 & 1 & 0 & 0 \\
                   -1 & 0 & 1 & 0 \\
                   -1 & 0 & 0 & 1 \\
                   0 & -1 & 1 & 0 \\
                   0 & -1 & 0 & 1 \\
                   0 & 0 & -1 & 1 \end{array} \right]$ and $L_0 = d_0^* d_0=
\left[ \begin{array}{cccc}
                   3 & -1 & -1 & -1 \\
                   -1 & 3 & -1 & -1 \\
                   -1 & -1 & 3 & -1 \\
                   -1 & -1 & -1 & 3
                  \end{array} \right]$.
$d_1=\left[ \begin{array}{cccccc}
                   1 & -1 & 0 & 1 & 0 & 0 \\
                   1 & 0 & -1 & 0 & 1 & 0 \\
                   0 & 1 & -1 & 0 & 0 & 1 \\
                   0 & 0 & 0 & 1 & -1 & 1 \end{array} \right]$ and
$d_1^* d_1 = \left[ \begin{array}{cccccc}
                   2 & -1 & -1 & 1 & 1 & 0 \\
                   -1 & 2 & -1 & -1 & 0 & 1 \\
                   -1 & -1 & 2 & 0 & -1 & -1 \\
                   1 & -1 & 0 & 2 & -1 & 1 \\
                   1 & 0 & -1 & -1 & 2 & -1 \\
                   0 & 1 & -1 & 1 & -1 & 2 \end{array} \right]$ and
$d_0 d_0^* = \left[ \begin{array}{cccccc}
                   2 & 1 & 1 & -1 & -1 & 0 \\
                   1 & 2 & 1 & 1 & 0 & -1 \\
                   1 & 1 & 2 & 0 & 1 & 1 \\
                   -1 & 1 & 0 & 2 & 1 & -1 \\
                   -1 & 0 & 1 & 1 & 2 & 1 \\
                   0 & -1 & 1 & -1 & 1 & 2 \end{array} \right]$ which sums up
to $L_1 = d_1^* d_1 + d_0 d_0^* = 4 Id$. Finally, $d_2 = [1,1,1,1]$ and 
$L_2= d_2^* d_2 + d_1 d_1^* =\left[ \begin{array}{cccc}
                   4 & 0 & 0 & 0 \\
                   0 & 4 & 0 & 0 \\
                   0 & 0 & 4 & 0 \\
                   0 & 0 & 0 & 4 \end{array} \right]$. 
For a complete graph $K_{n}$, the eigenvalues of $L_0$ are $\{ 0,n, \dots, n \; \}$. 
The other operators $L_k$ with $k \geq 1$ are diagonal with entries $n$. The 
Dirac operator $D$ restricted to $p$-forms is a $B(n+1,p+1) \times B(n+1,p)$ matrix,
where $B(n,k)=n!/(k! (n-k)!)$ is the binomial coefficient. 

\vfill \pagebreak

\subsection{Octahedron} 

Lets just write out the matrix $L=D^2=$

\begin{figure}[h]
\scalebox{0.35}{\includegraphics{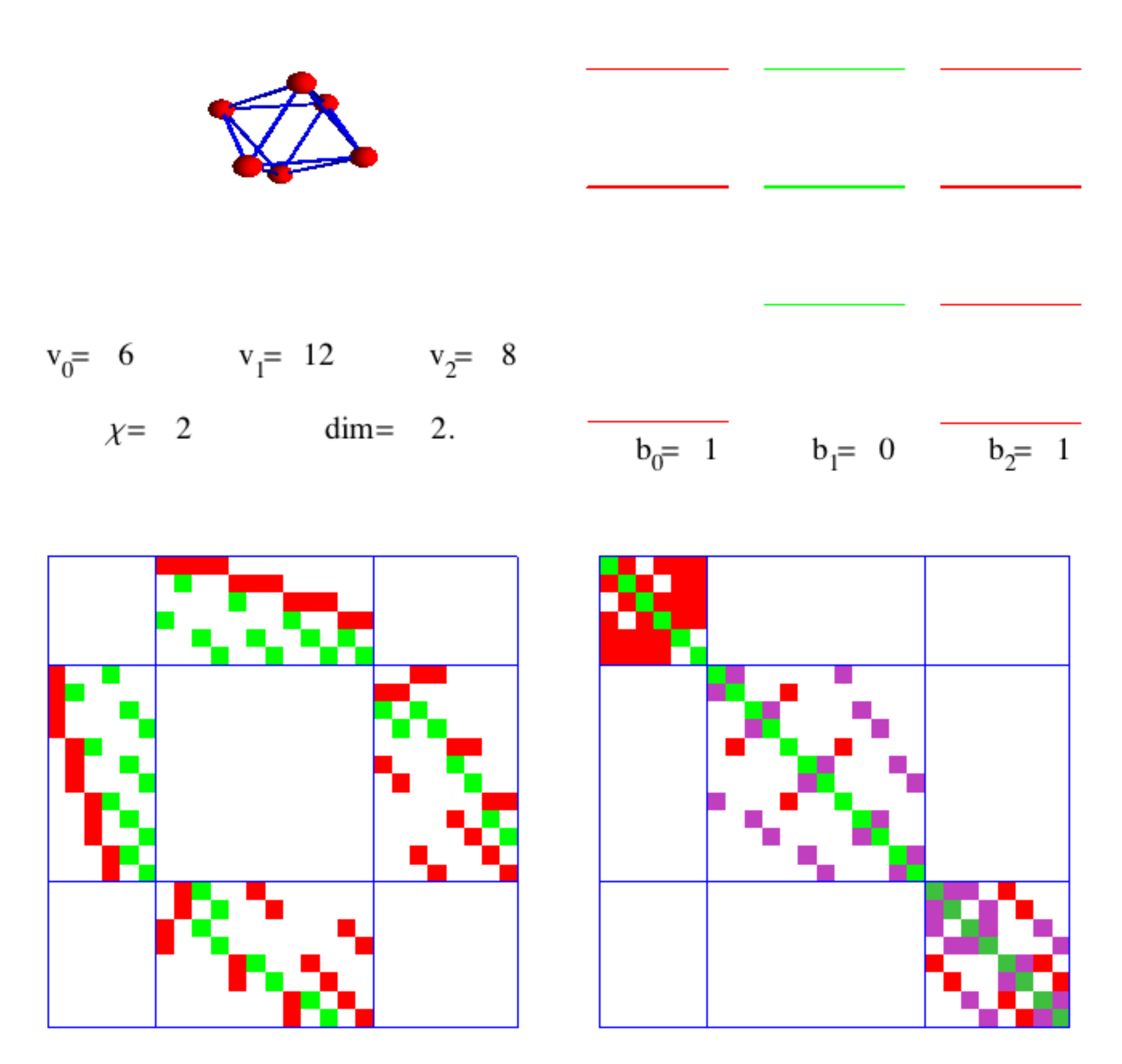}}
\caption{
\label{octahedron}
}
\end{figure}

\begin{tiny}
$$ \left[ \begin{array}{cccccccccccccccccccccccccc}
4&-1&0&-1&-1&-1&0&0&0&0&0&0&0&0&0&0&0&0&0&0&0&0&0&0&0&0\\
-1&4&-1&0&-1&-1&0&0&0&0&0&0&0&0&0&0&0&0&0&0&0&0&0&0&0&0\\
0&-1&4&-1&-1&-1&0&0&0&0&0&0&0&0&0&0&0&0&0&0&0&0&0&0&0&0\\
-1&0&-1&4&-1&-1&0&0&0&0&0&0&0&0&0&0&0&0&0&0&0&0&0&0&0&0\\
-1&-1&-1&-1&4&0&0&0&0&0&0&0&0&0&0&0&0&0&0&0&0&0&0&0&0&0\\
-1&-1&-1&-1&0&4&0&0&0&0&0&0&0&0&0&0&0&0&0&0&0&0&0&0&0&0\\
0&0&0&0&0&0&4&1&0&0&0&0&0&1&0&0&0&0&0&0&0&0&0&0&0&0\\
0&0&0&0&0&0&1&4&0&0&-1&0&0&0&0&0&0&0&0&0&0&0&0&0&0&0\\
0&0&0&0&0&0&0&0&4&1&0&0&0&0&1&0&0&0&0&0&0&0&0&0&0&0\\
0&0&0&0&0&0&0&0&1&4&0&0&0&0&0&1&0&0&0&0&0&0&0&0&0&0\\
0&0&0&0&0&0&0&-1&0&0&4&0&0&-1&0&0&0&0&0&0&0&0&0&0&0&0\\
0&0&0&0&0&0&0&0&0&0&0&4&1&0&0&0&1&0&0&0&0&0&0&0&0&0\\
0&0&0&0&0&0&0&0&0&0&0&1&4&0&0&0&0&1&0&0&0&0&0&0&0&0\\
0&0&0&0&0&0&1&0&0&0&-1&0&0&4&0&0&0&0&0&0&0&0&0&0&0&0\\ 
0&0&0&0&0&0&0&0&1&0&0&0&0&0&4&1&0&0&0&0&0&0&0&0&0&0\\ 
0&0&0&0&0&0&0&0&0&1&0&0&0&0&1&4&0&0&0&0&0&0&0&0&0&0\\ 
0&0&0&0&0&0&0&0&0&0&0&1&0&0&0&0&4&1&0&0&0&0&0&0&0&0\\ 
0&0&0&0&0&0&0&0&0&0&0&0&1&0&0&0&1&4&0&0&0&0&0&0&0&0\\ 
0&0&0&0&0&0&0&0&0&0&0&0&0&0&0&0&0&0&3&1&1&0&-1&0&0&0\\
0&0&0&0&0&0&0&0&0&0&0&0&0&0&0&0&0&0&1&3&0&1&0&-1&0&0\\
0&0&0&0&0&0&0&0&0&0&0&0&0&0&0&0&0&0&1&0&3&1&0&0&1&0\\ 
0&0&0&0&0&0&0&0&0&0&0&0&0&0&0&0&0&0&0&1&1&3&0&0&0&1\\ 
0&0&0&0&0&0&0&0&0&0&0&0&0&0&0&0&0&0&-1&0&0&0&3&1&-1&0\\ 
0&0&0&0&0&0&0&0&0&0&0&0&0&0&0&0&0&0&0&-1&0&0&1&3&0&-1\\ 
0&0&0&0&0&0&0&0&0&0&0&0&0&0&0&0&0&0&0&0&1&0&-1&0&3&1\\
0&0&0&0&0&0&0&0&0&0&0&0&0&0&0&0&0&0&0&0&0&1&0&-1&1&3
\end{array} \right]  \; . $$
\end{tiny}
\begin{tabular}{lll}
${\rm tr}(-t L_0)$  &=  $e^{-6 t} \left(3 e^{2 t}+e^{6 t}+2\right)$  & $\sigma(L_0) = \{ 0, 4, 4, 4, 6, 6 \}$  \\
${\rm tr}(-t L_1)$  &=  $3 e^{-6 t} \left(e^{2 t}+1\right)^2$        & $\sigma(L_1) = \{ 2, 2, 2, 4, 4, 4, 4, 4, 4, 6, 6, 6 \}$ \\
${\rm tr}(-t L_2)$  &=  $e^{-6 t} \left(e^{2 t}+1\right)^3$          & $\sigma(L_2) = \{ 0, 2, 2, 2, 4, 4, 4, 6 \}$ 
\end{tabular}  \\
gives the super trace 
$$ \str(-t L) = {\rm tr}(-t L_0) - {\rm tr}(-t L_1) + {\rm tr}(-t L_2) = 2 \; . $$            

\vfill \pagebreak

\subsection{Icosahedron} 

\begin{figure}[h]
\scalebox{0.32}{\includegraphics{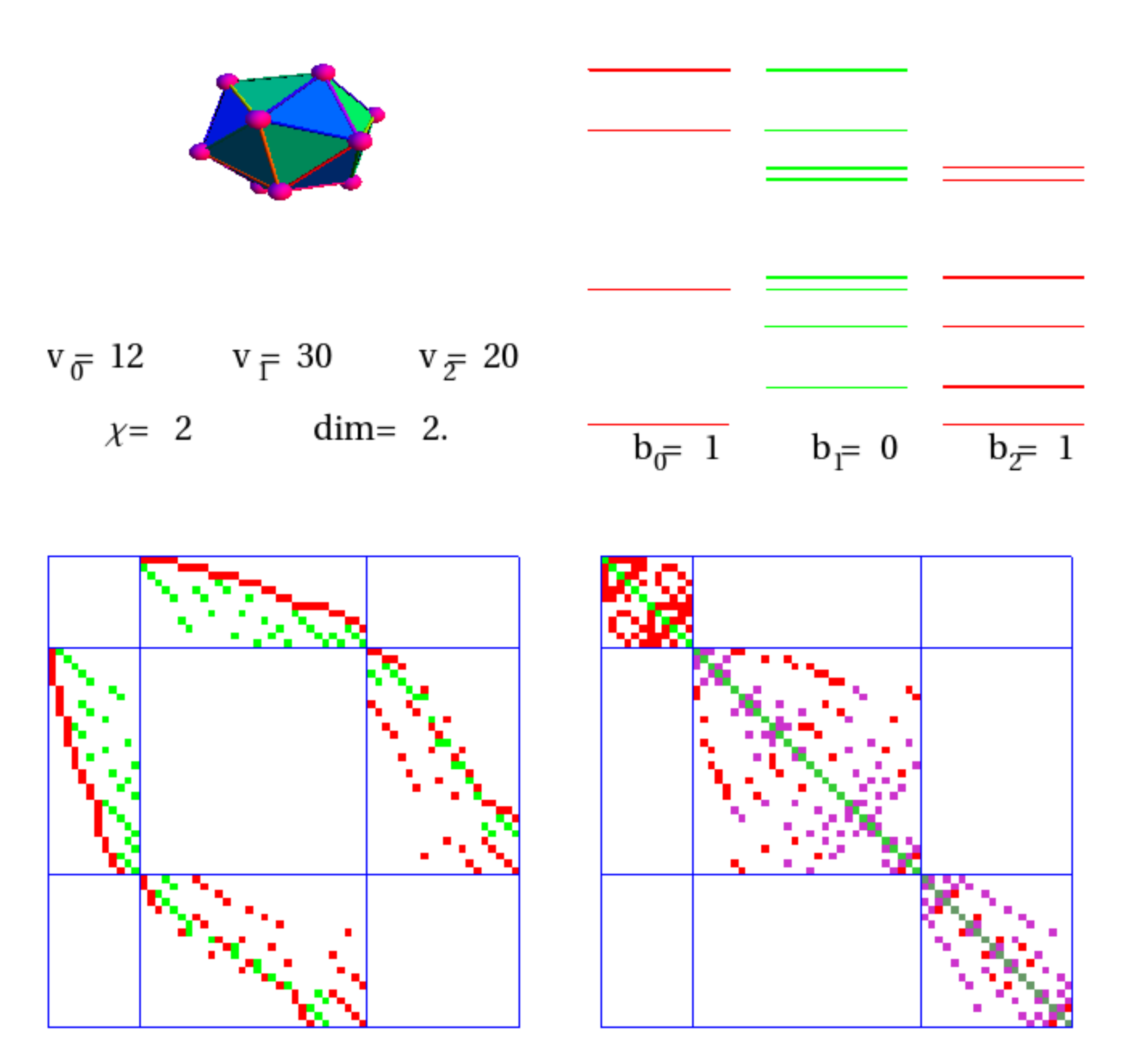}}
\caption{
\label{icosahedron}
}
\end{figure}

We write $\lambda^k$ if the eigenvalue $\lambda$ appears with multiplicity $k$. The bosonic eigenvalues of $L=D^2$ are 
$$ 0^2, 2^5, 3^4, 5^4, 6^5, (3 - \sqrt{5})^3, (5 - \sqrt{5})^3, (3 + \sqrt{5})^3, (5 + \sqrt{5})^3  \; . $$
The fermionic eigenvalues of $L$ are
$$      2^5, 3^4, 5^4, 6^5, (3 - \sqrt{5})^3, (5 - \sqrt{5})^3, (3 + \sqrt{5})^3, (5 + \sqrt{5})^3  \; . $$
The eigenvalues $0$ appear only on bosonic parts matching that the Betti vector $\beta=(1,0,1)$ has its support
on the bosonic part. The picture colors the vertices,edges and triangles according to the ground states, the 
eigenvectors to the lowest eigenvalues of $L_0,L_1$ and $L_2$. 

\begin{figure}[h]
\scalebox{0.32}{\includegraphics{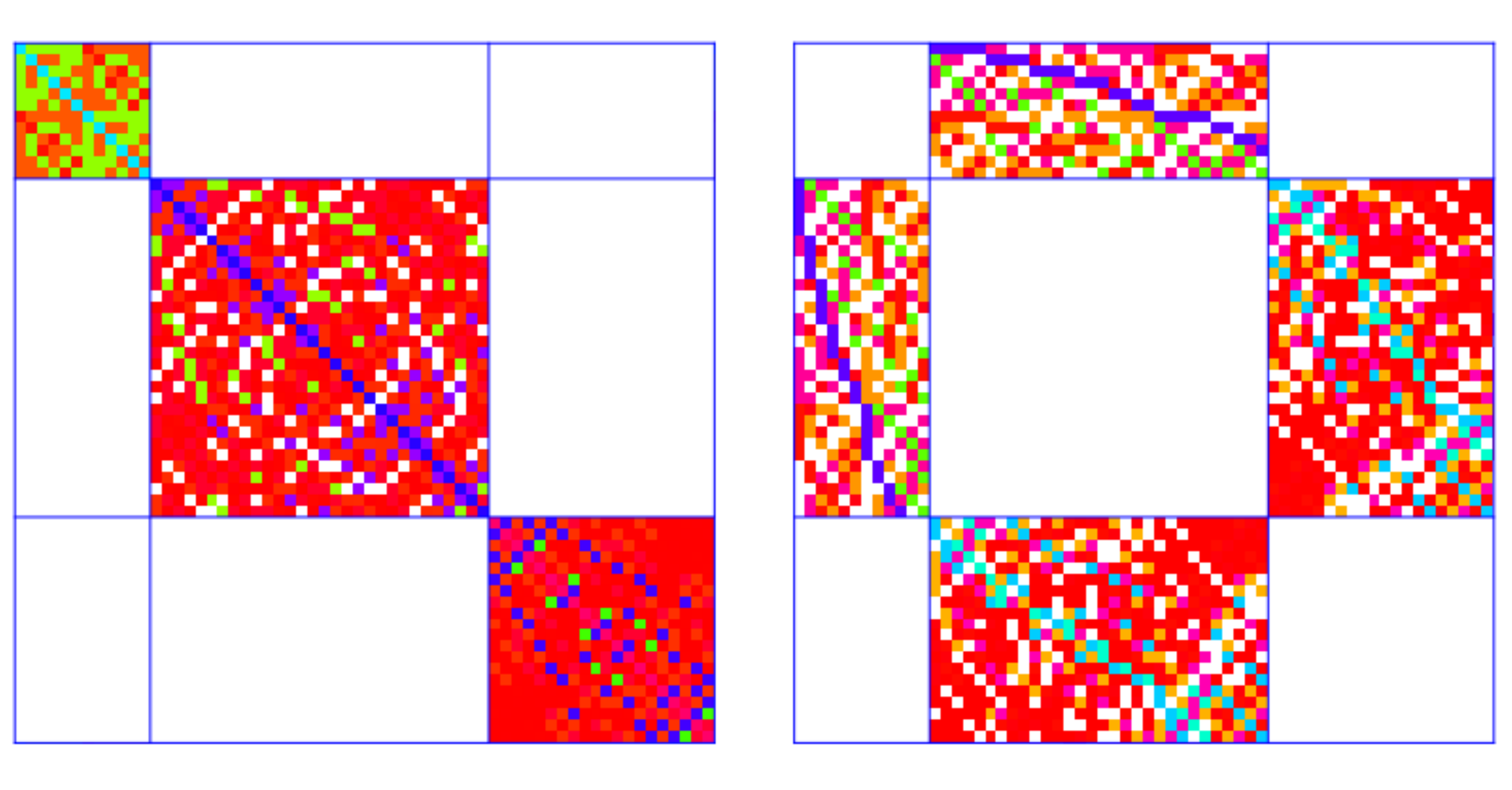}}
\caption{
\label{evolution}
The matrices ${\rm Re}(\exp(i D)) = \cos(D)$ and ${\rm Im}(\exp(i D)) =\sin(D)$ which 
appear in the solution of the wave equation on the graph. Having the matrix $D$ in the computer
makes it easy to watch the wave evolution on a graph. 
}
\end{figure}

\vfill \pagebreak

\subsection{Fullerene}
The figure shows a fullerene type graph based on an icosahedron.  It has 
$v_0=42$ vertices, $v_1=120$ edges and $v_2=80$ triangles. 
The Dirac operator $D$ is a $242 \times 242$ matrix

\begin{figure}[h]
\scalebox{0.35}{\includegraphics{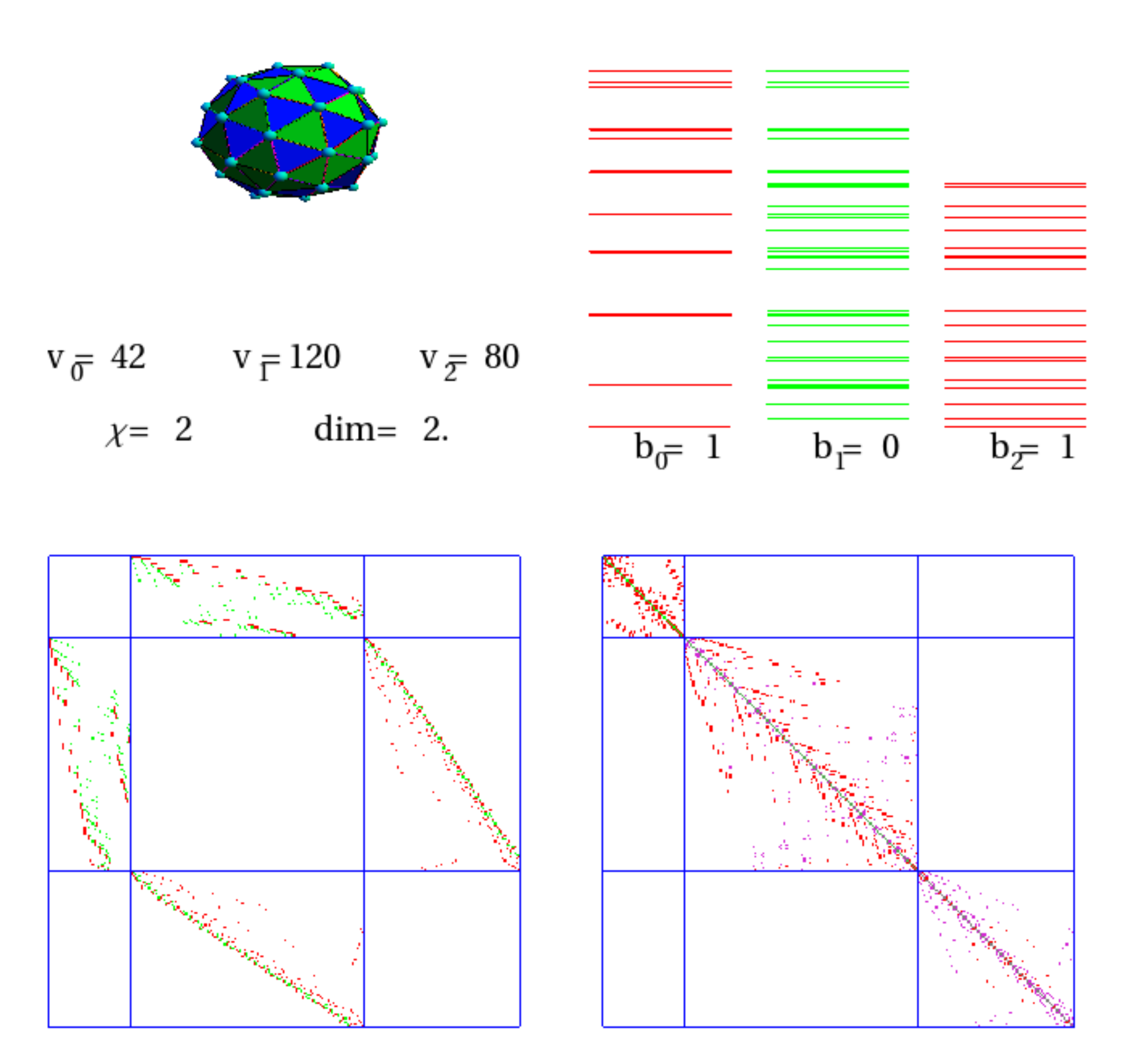}}
\caption{
\label{icosahedron2}
}
\end{figure}

In the next picture we see the three sorted spectra of $L_0,L_1$ and $L_2$. 

\begin{figure}[h]
\scalebox{0.45}{\includegraphics{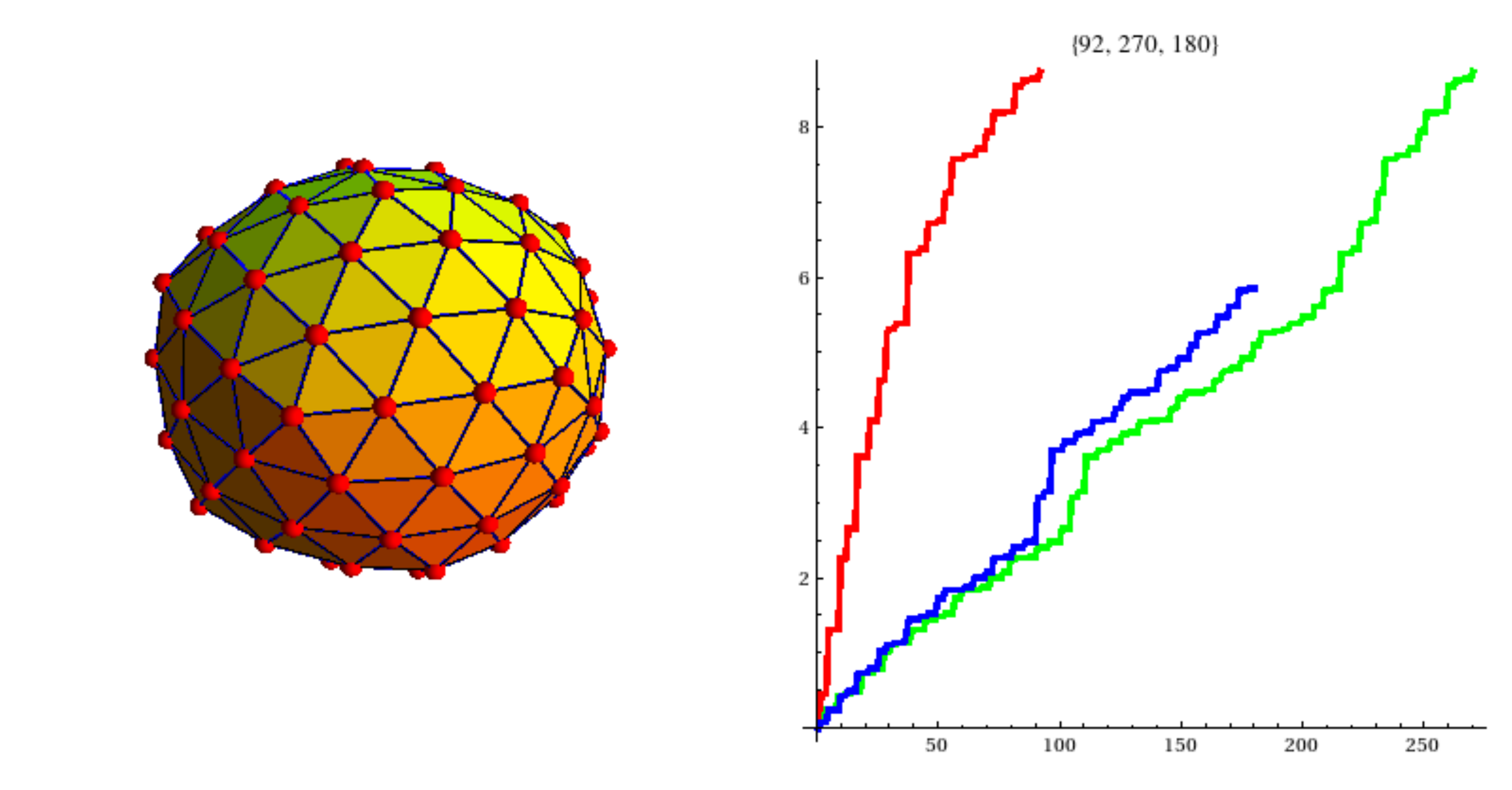}}
\caption{
\label{fullerene4}
}
\end{figure}

\vfill \pagebreak

\subsection{Torus} 

Here is the spectrum of a two dimensional discrete torus

\begin{figure}[h]
\scalebox{0.35}{\includegraphics{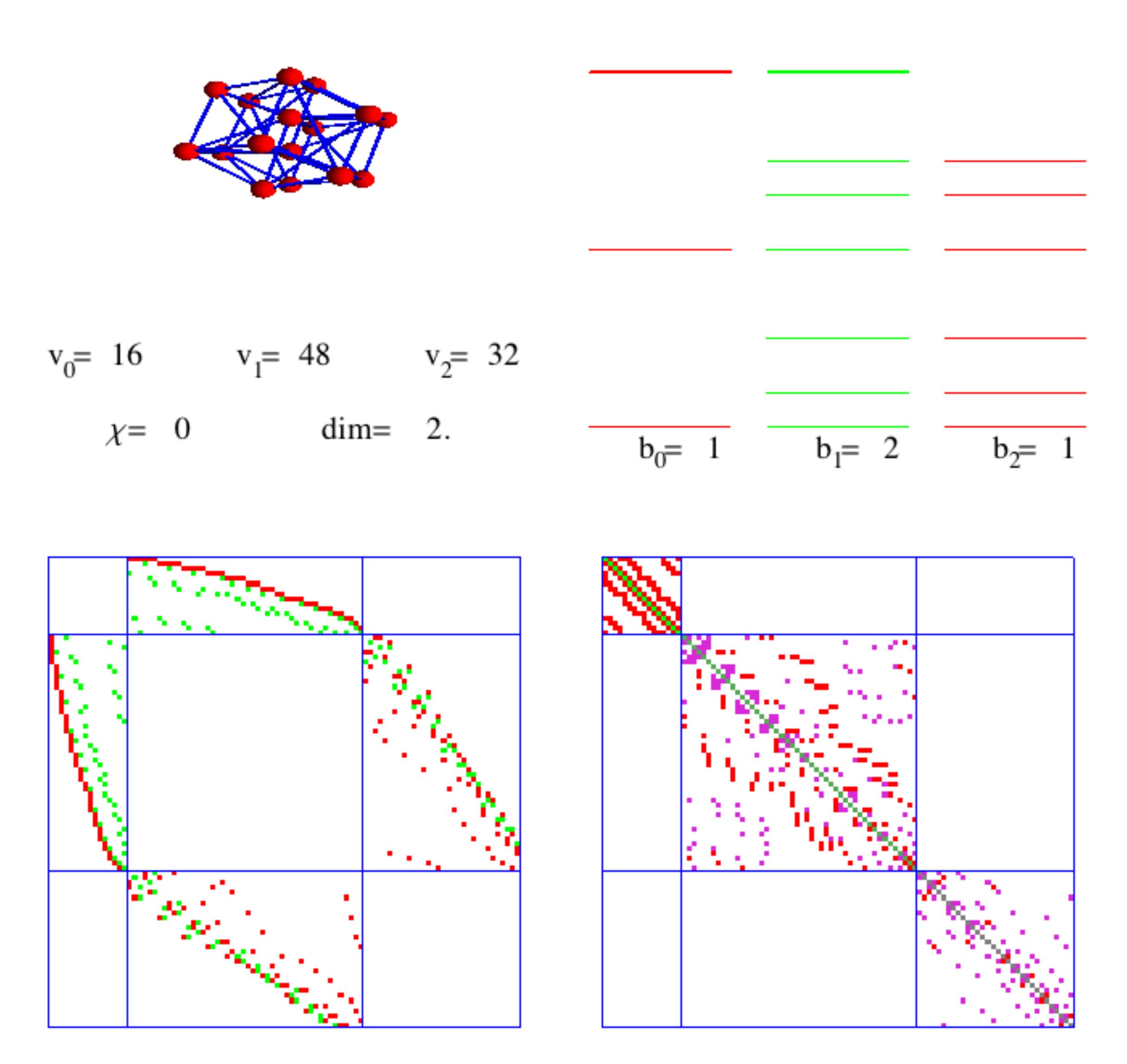}}
\caption{
\label{torus}
}
\end{figure}

and a discrete Klein bottle: 

\begin{figure}[h]
\scalebox{0.35}{\includegraphics{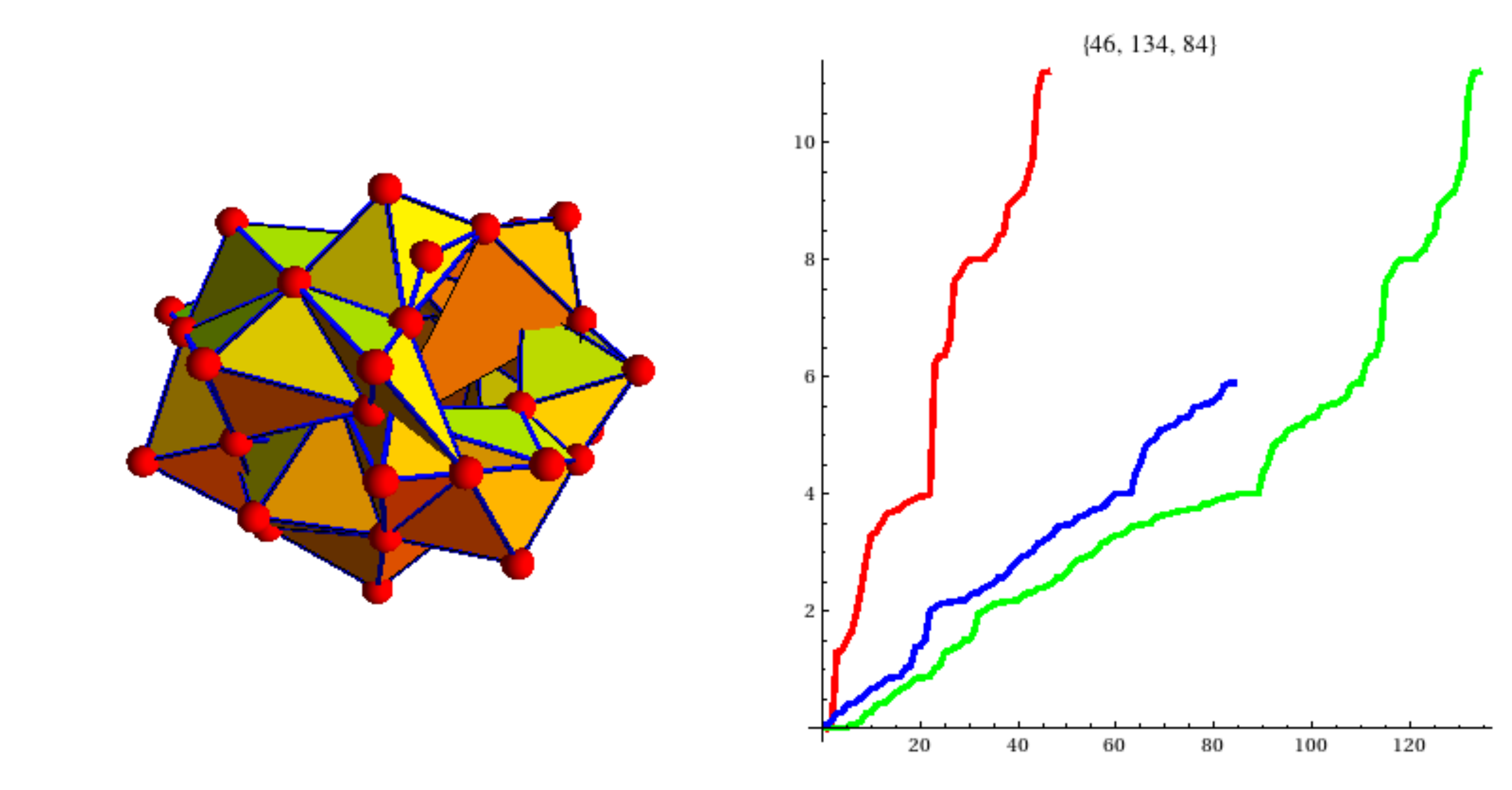}}
\caption{
\label{klein}
}
\end{figure}

\vfill \pagebreak

\pagebreak

\subsection{Dunce hat and Petersen}

The dunce hat is an example important in homotopy theory.
It is a non-contractible graph which is homotopic to a one point graph.

\begin{figure}[h]
\scalebox{0.3}{\includegraphics{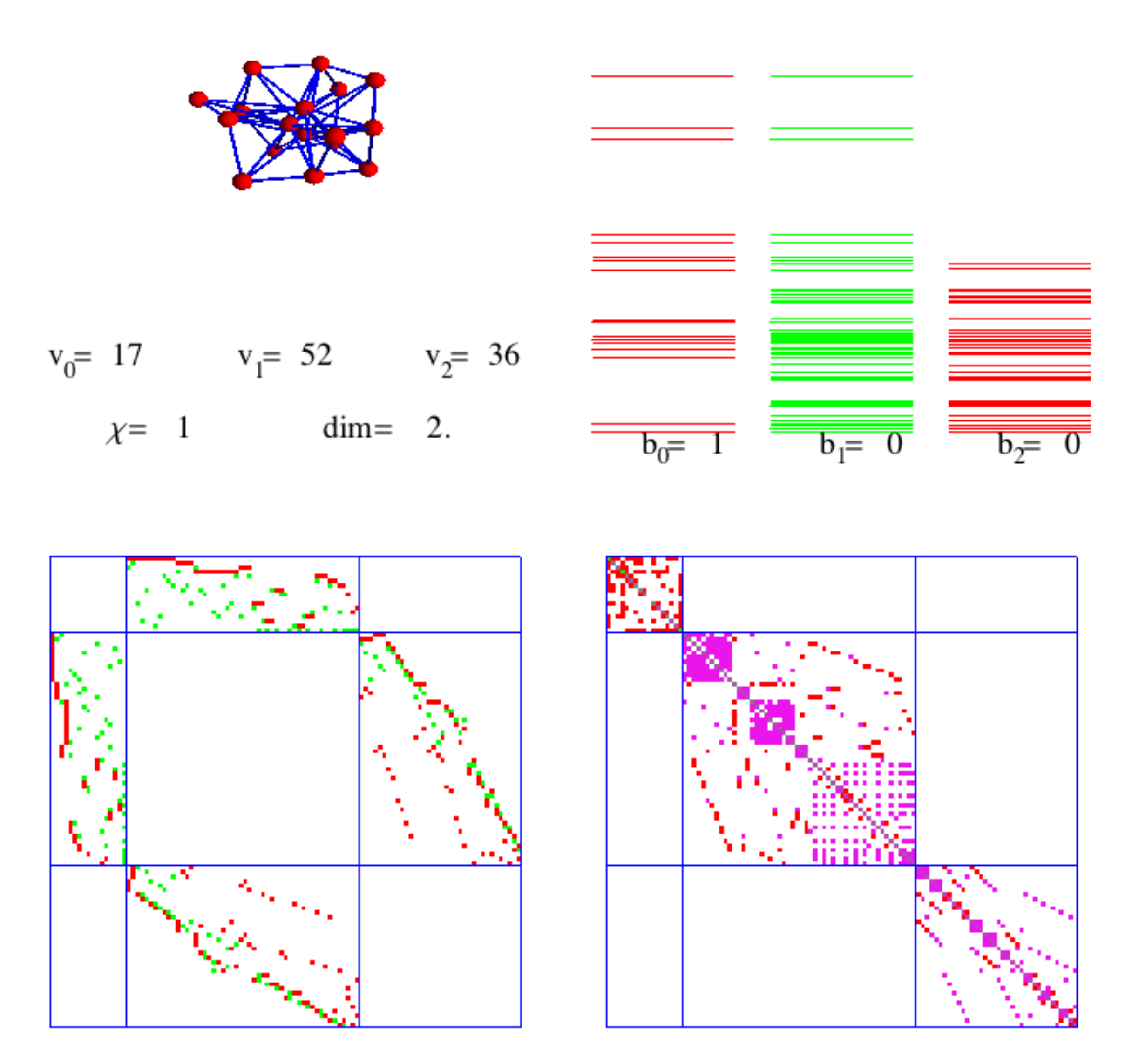}}
\caption{
\label{dunce hat}
}
\end{figure}

Petersen graph

\begin{figure}[h]
\scalebox{0.3}{\includegraphics{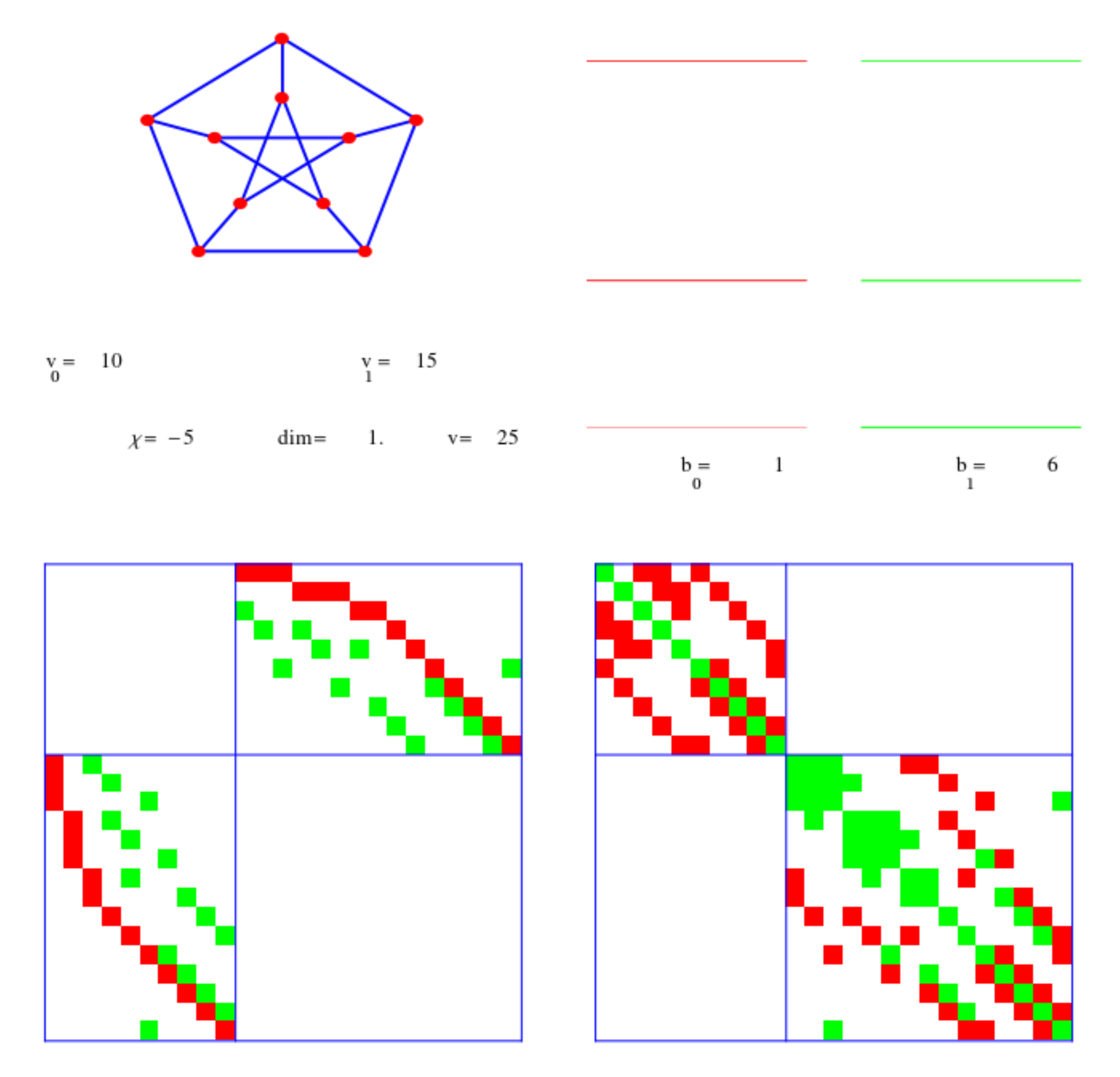}}
\caption{
\label{petersen}
}
\end{figure}

\vfill \pagebreak

\subsection{Isospectral graphs}  

As mentioned in the previous section,
some known isospectral graphs for the graph Laplacian are also
isospectral for the Dirac operator. 

\begin{figure}[ht]
\scalebox{0.30}{\includegraphics{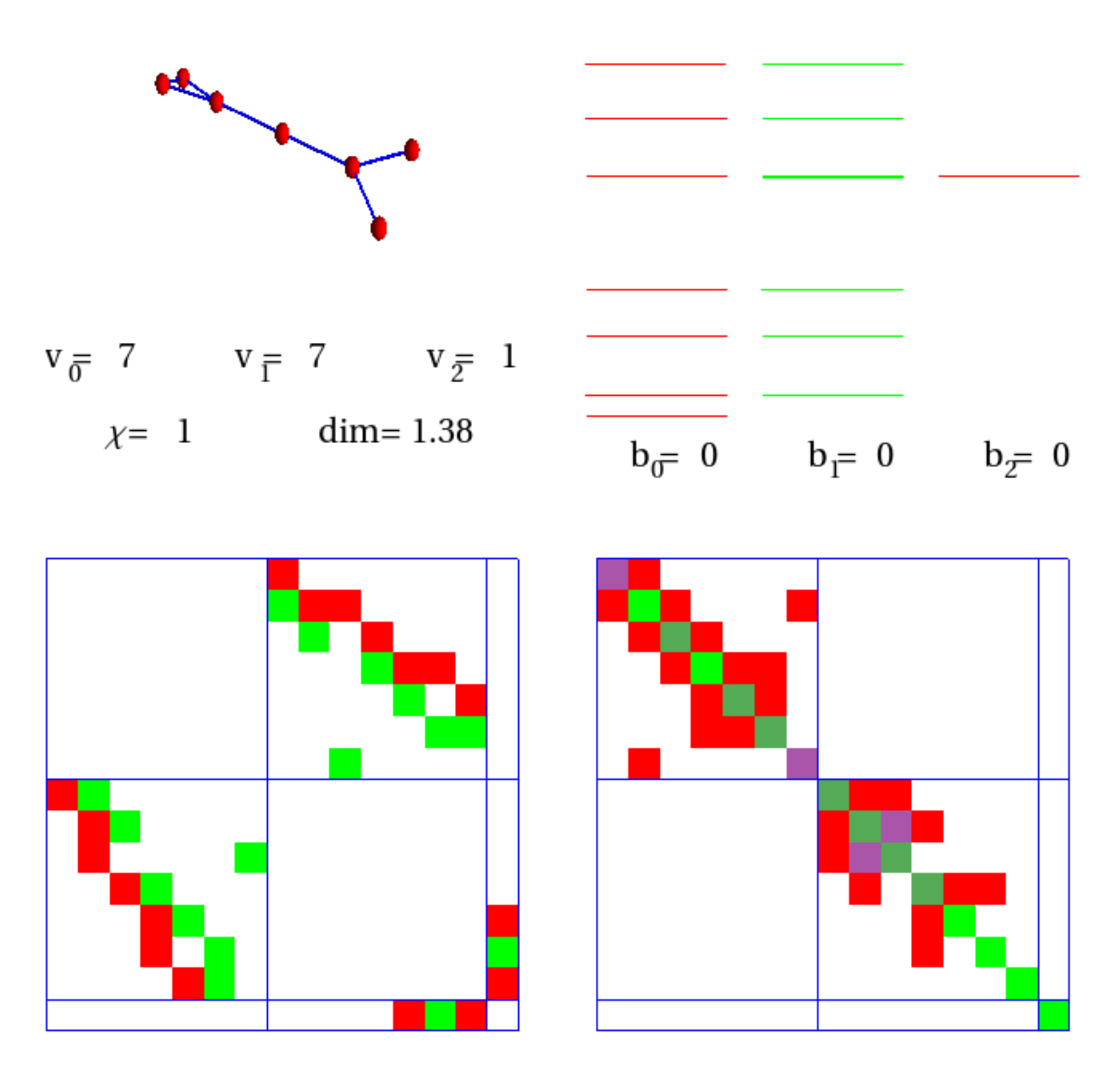}}
\scalebox{0.30}{\includegraphics{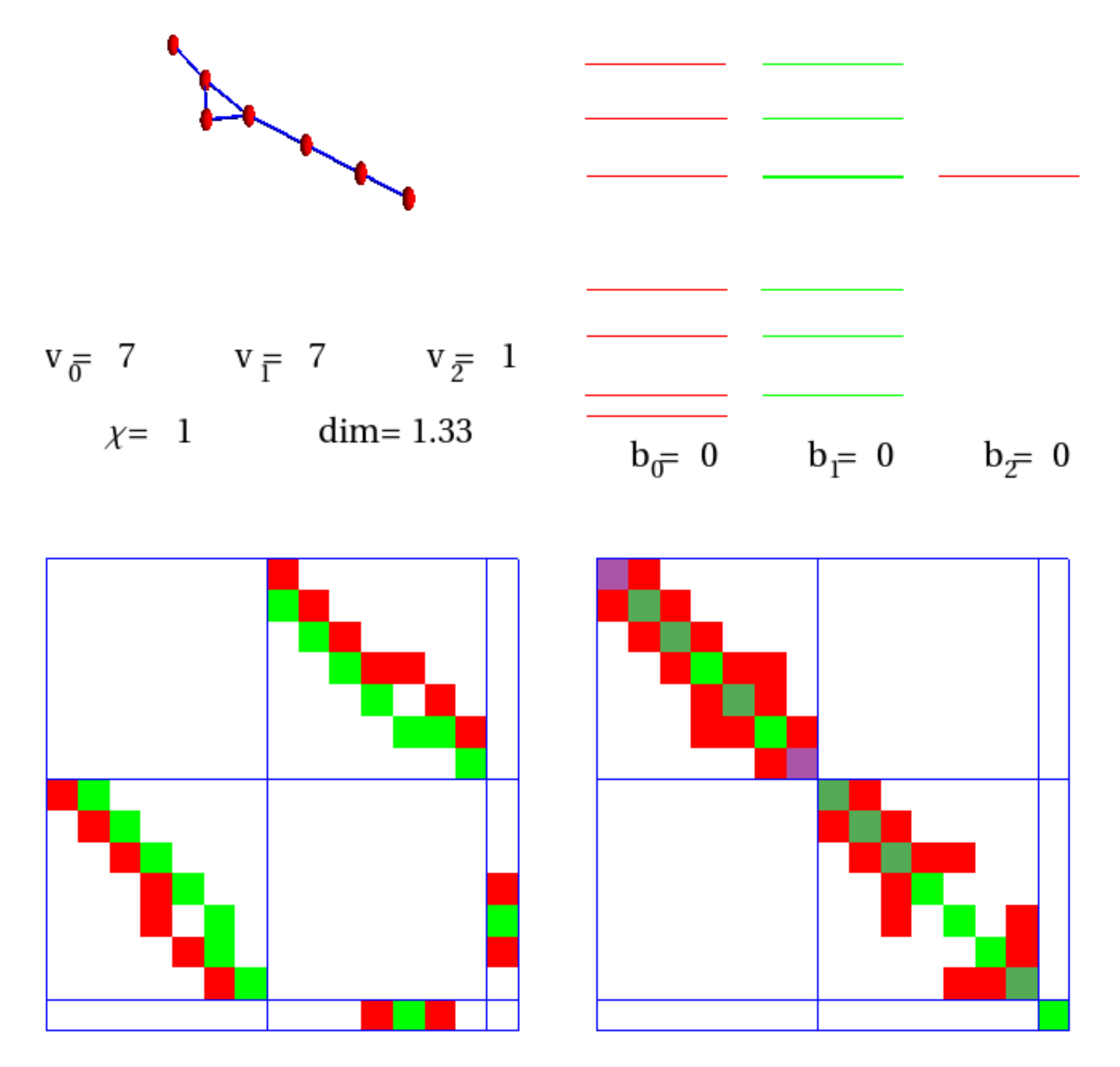}}
\caption{
\label{isospectral1}
}
\end{figure}

In \cite{Tan}, all isospectral connected graphs up
to order $v_0=7$ have been computed. For order $v_0=7$, there are already examples which
are also isospectral for all $p$-forms and for the Dirac operator.
Some of them lead to Dirac isospectral graphs:  \\
$\sigma(D) = \{-\sqrt{3+\sqrt{2}},\sqrt{3+\sqrt{2}},-\sqrt{2+\sqrt{3}},\sqrt{2+\sqrt{3}},-\sqrt{3},-\sqrt{3}$ \\
$\sqrt{3},\sqrt{3},-\sqrt{3-\sqrt{2}},\sqrt{3-\sqrt{2}},-1,1,-\sqrt{2-\sqrt{3}},\sqrt{2-\sqrt{3}},0 \}$. \\
The Laplacian eigenvalues are
 $\sigma(L_0) = \left\{3+\sqrt{2},2+\sqrt{3},3,3-\sqrt{2},1,2-\sqrt{3},0\right\}$ and
 $\sigma(L_1) = \left\{3+\sqrt{2},2+\sqrt{3},3,3,3-\sqrt{2},1,2-\sqrt{3}\right\}$ as well
as $\sigma(L_2) = \{3\}$.

\vfill \pagebreak

\subsection{Planar regions}

One of the questions which fueled research on spectral theory was whether
one ``can hear the shape of a drum". The discrete analogue question is 
whether the Dirac operator can hear the shape of a discrete hexagonal region.

\begin{figure}[ht]
\scalebox{0.30}{\includegraphics{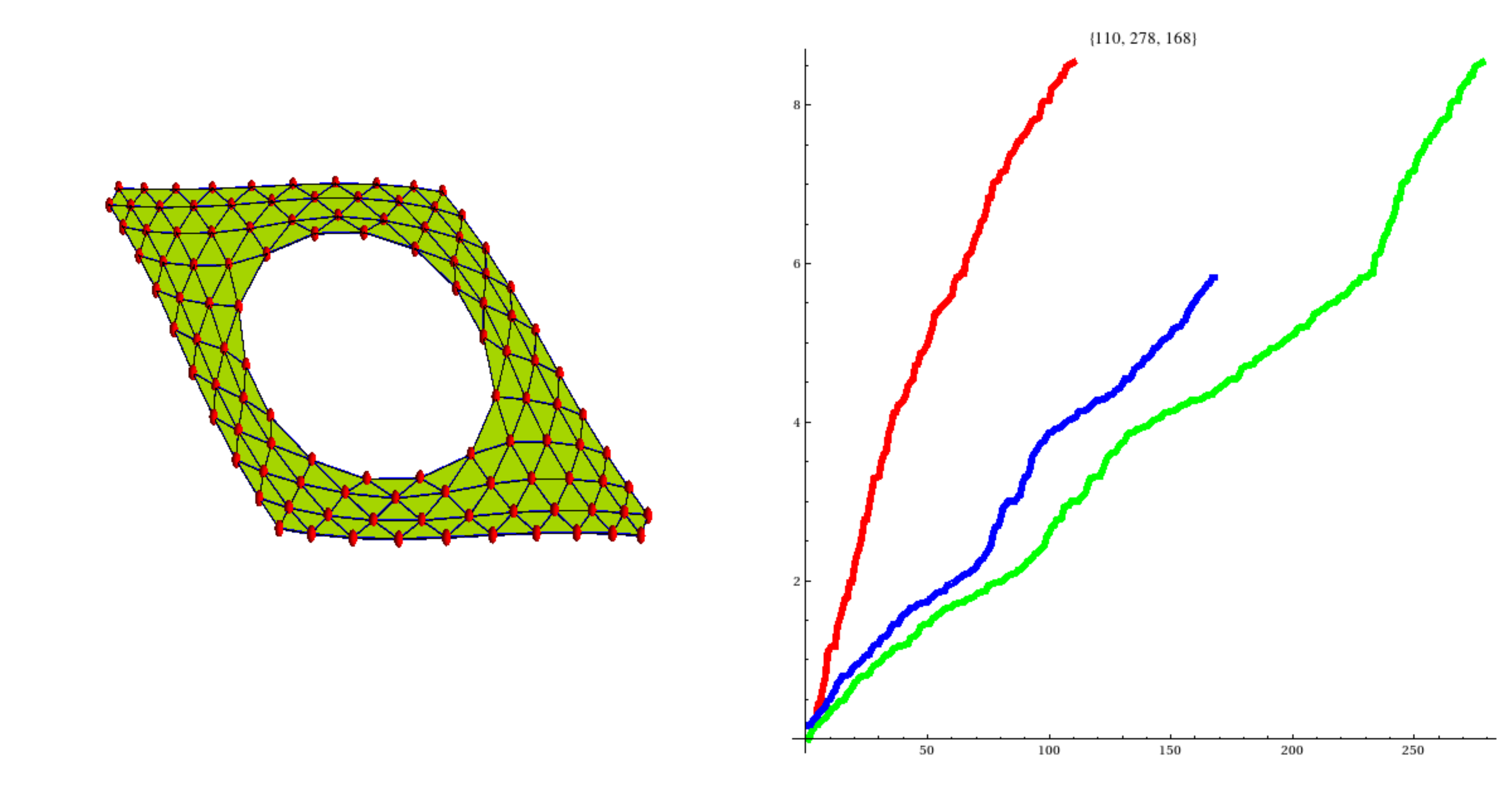}}
\caption{
\label{hexregion}
}
A hexagonal region and the spectra of the Laplacians $L_0,L_1,L_2$. 
\end{figure}

\begin{figure}[ht]
\scalebox{0.30}{\includegraphics{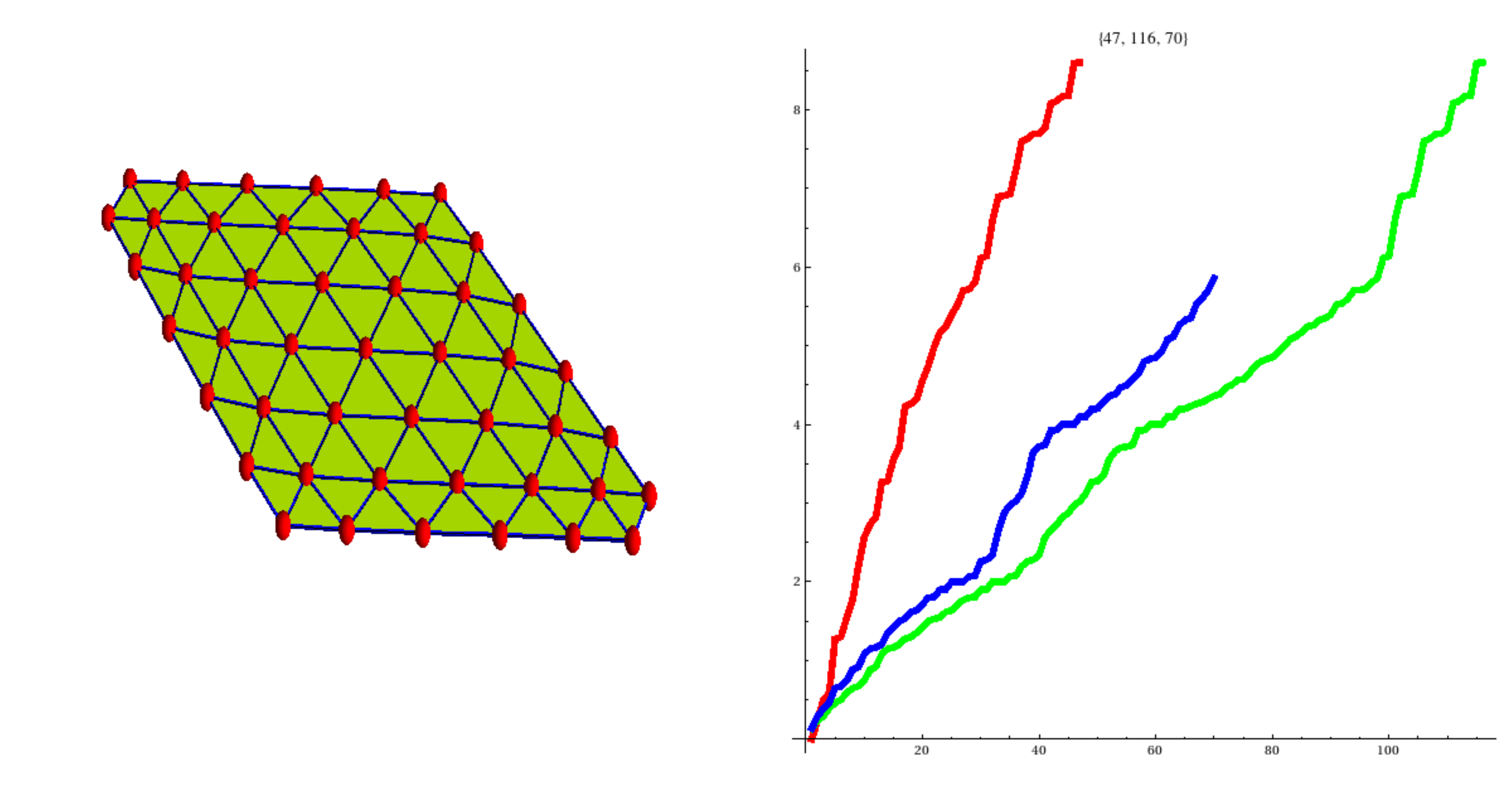}}
\caption{
\label{hexregion}
}
A convex hexagonal region and the spectra of $L_0,L_1,L_2$. 
\end{figure}

As far as we know, no isospectral completely two dimensional 
convex graphs are known. 

\vfill
\pagebreak 

\subsection{Benzenoid}

Isospectral benzenoid graphs were constructed in 
\cite{Babic}. They are isospectral with respect to the adjacency matrix $A$. 
but not with respect to the Laplacian $L=B-A$. After triangulating the hexagons,
the first pair of graphs have Euler characteristic $0$, the second Euler characteristic $1$. 
But after triangularization, the graphs are not isospectral with respect to $A$, nor 
with respect to $L$. 

\begin{figure}[ht]
\scalebox{0.2}{\includegraphics{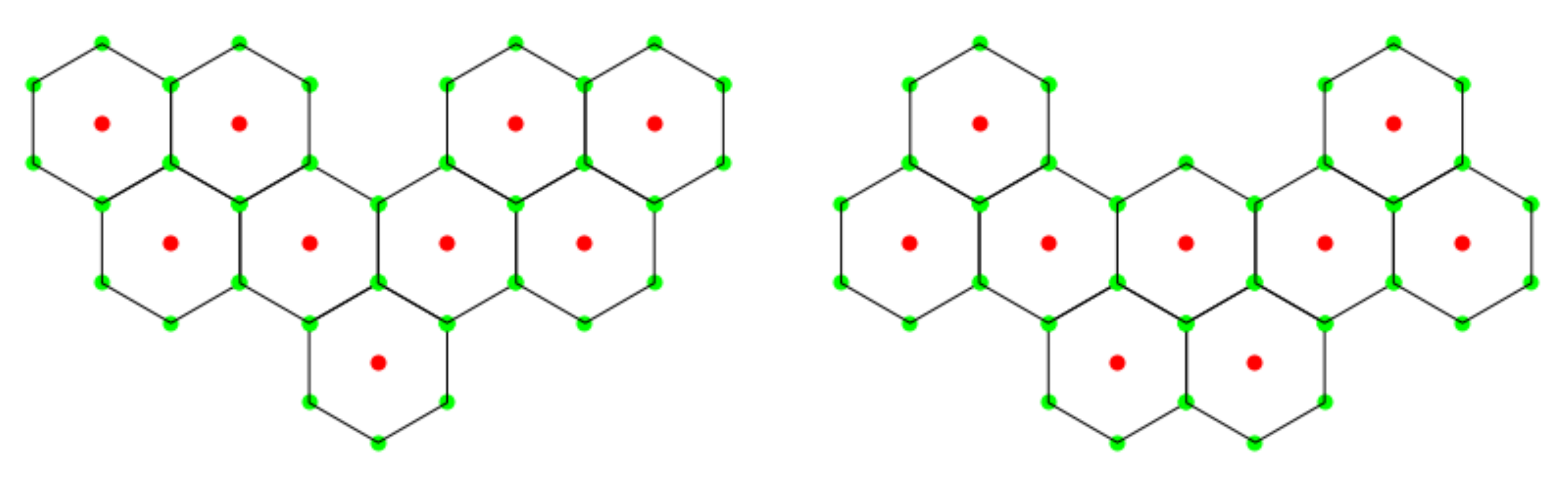}}
\scalebox{0.2}{\includegraphics{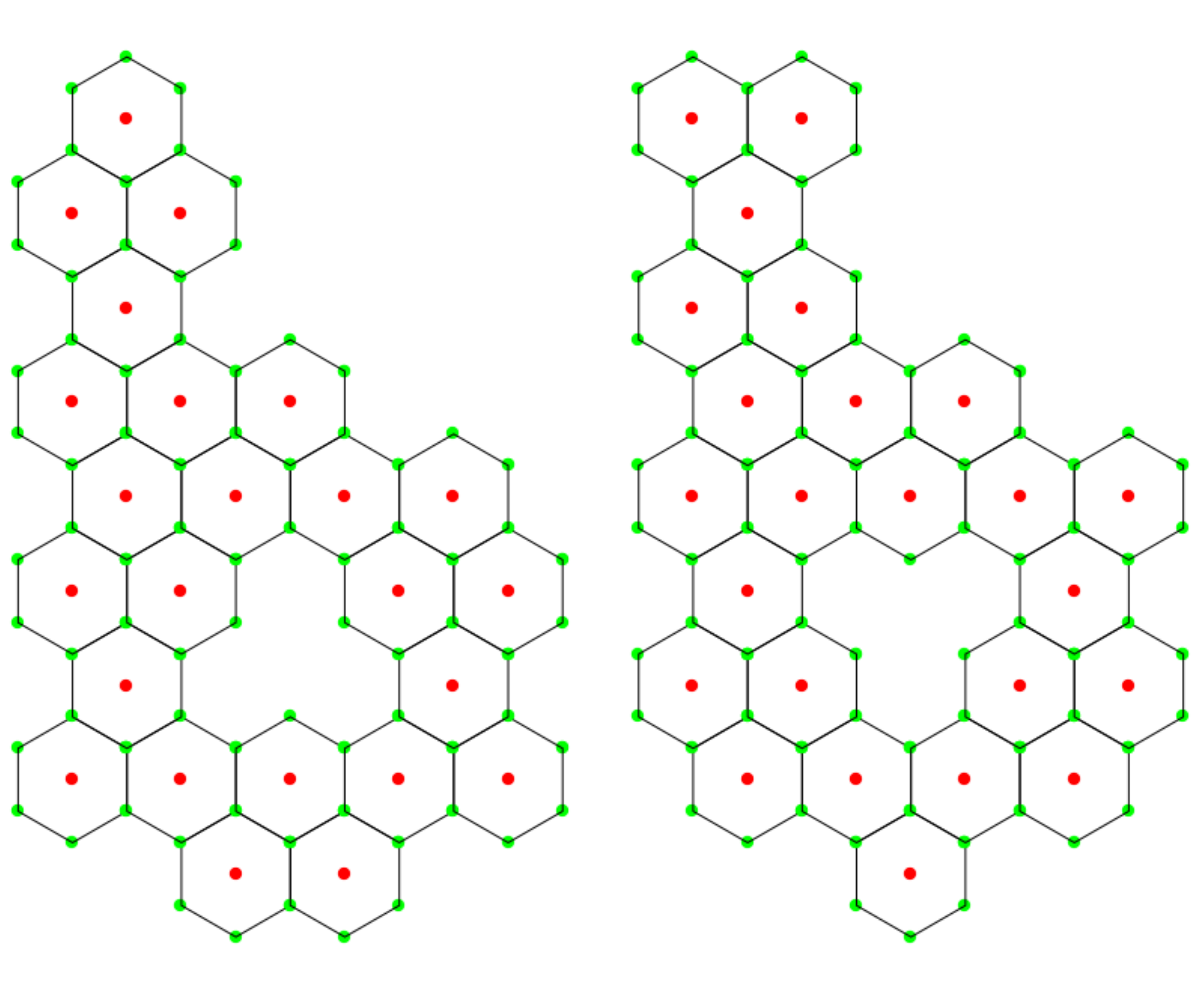}}
\caption{
Two examples of isospectral Benzenoid pairs which are isospectral 
for the adjacency matrix $A$. These graphs are one dimensional
since they contain no triangles. McKean-Singer shows that the 
nonzero spectrum of $L_0$ is the same than the nonzero spectrum 
of $L_1$. 
\label{benzenoid}
}
\end{figure}

\begin{figure}[ht]
\scalebox{0.35}{\includegraphics{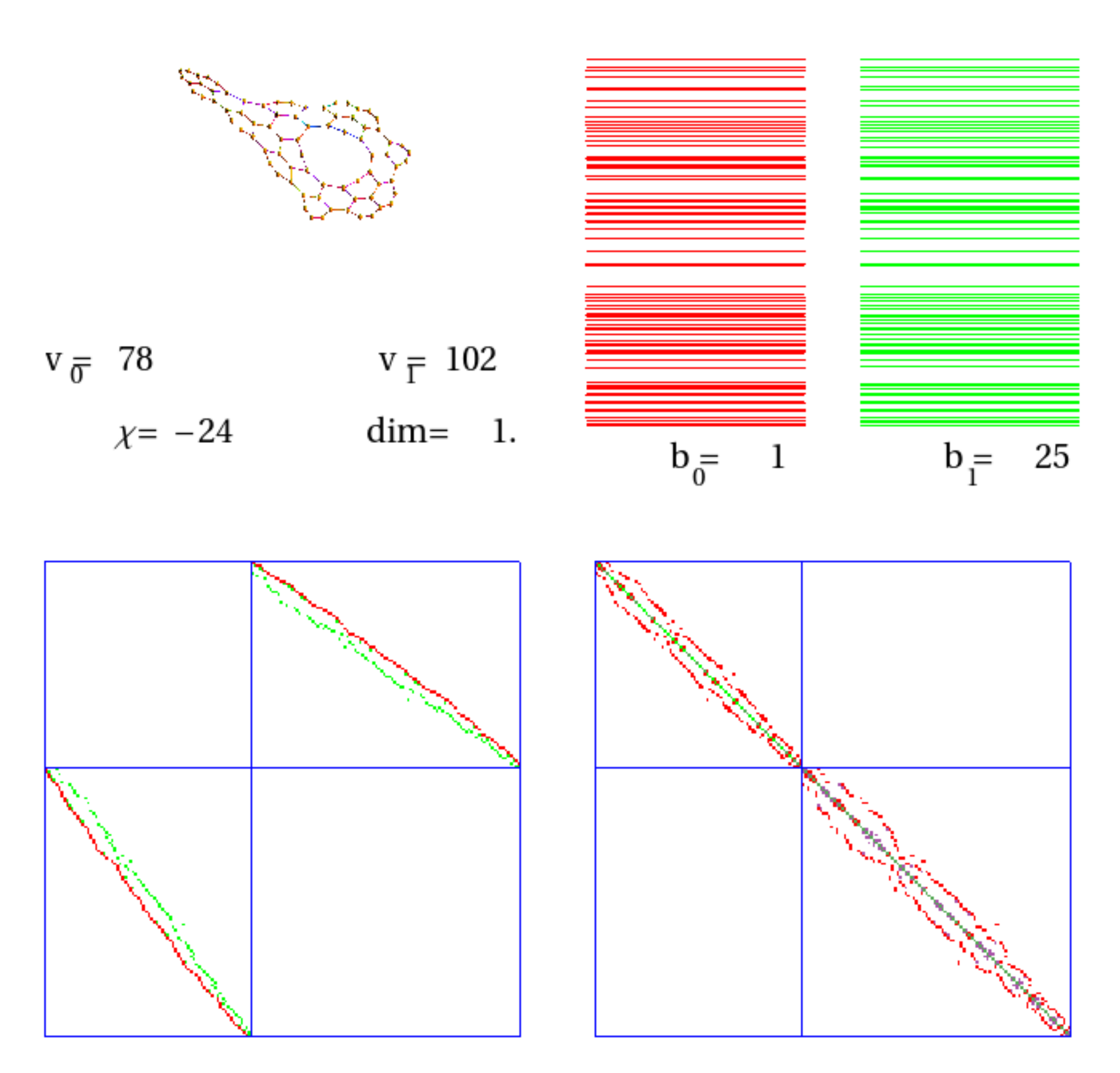}}
\caption{
The Dirac spectrum of the first Benzenoid. 
\label{benzenoid2}
}
\end{figure}

\vfill 
\pagebreak

\subsection{Gordon-Webb-Wolpert}

Isospectral domains in the plane were constructed with Sundada methods
by Gordon, Webb and Wolpert \cite{GWW}. 
A simplified version consists of two squares and three triangles. 

\begin{figure}[ht]
\scalebox{0.2}{\includegraphics{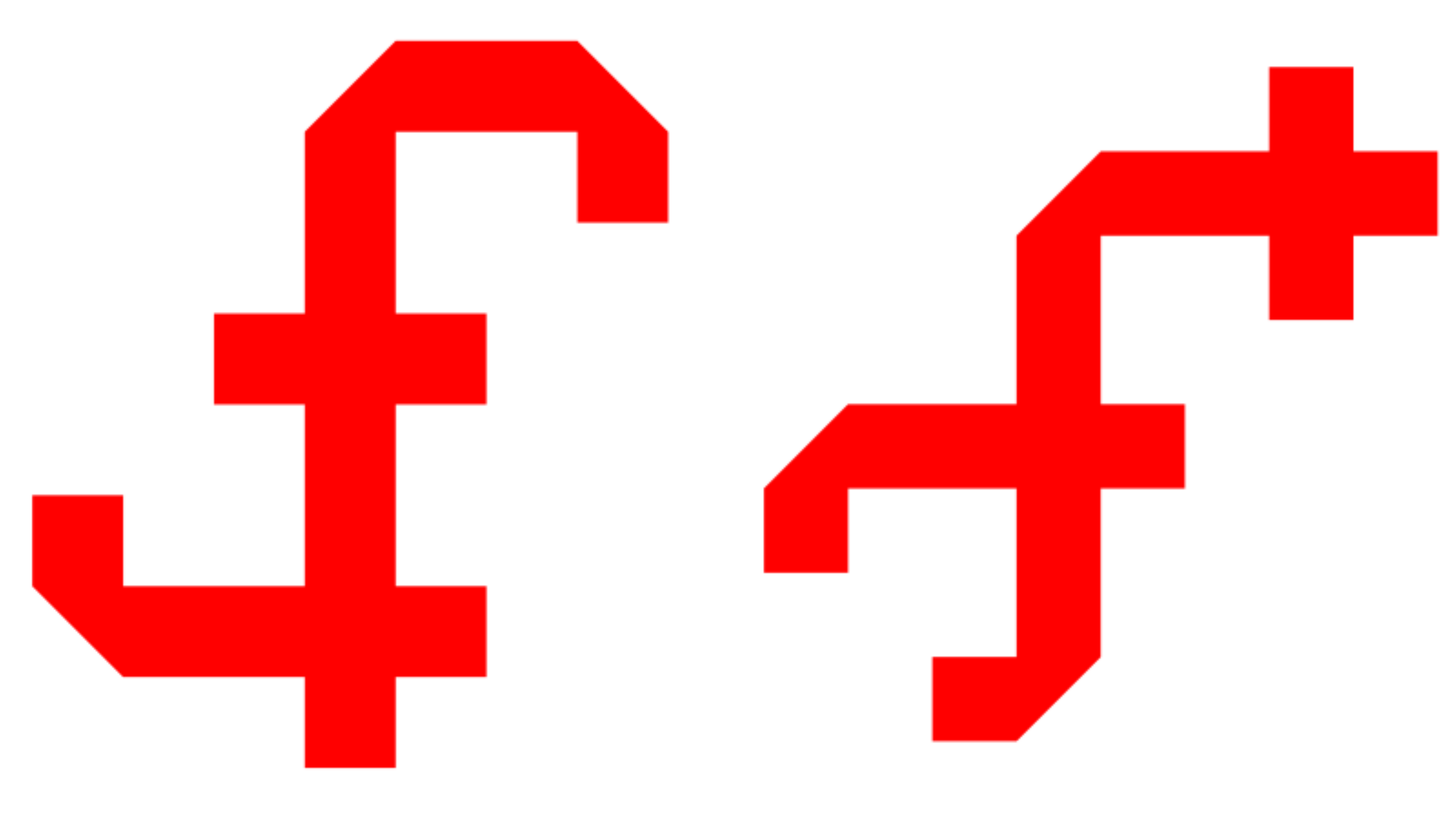}}
\scalebox{0.2}{\includegraphics{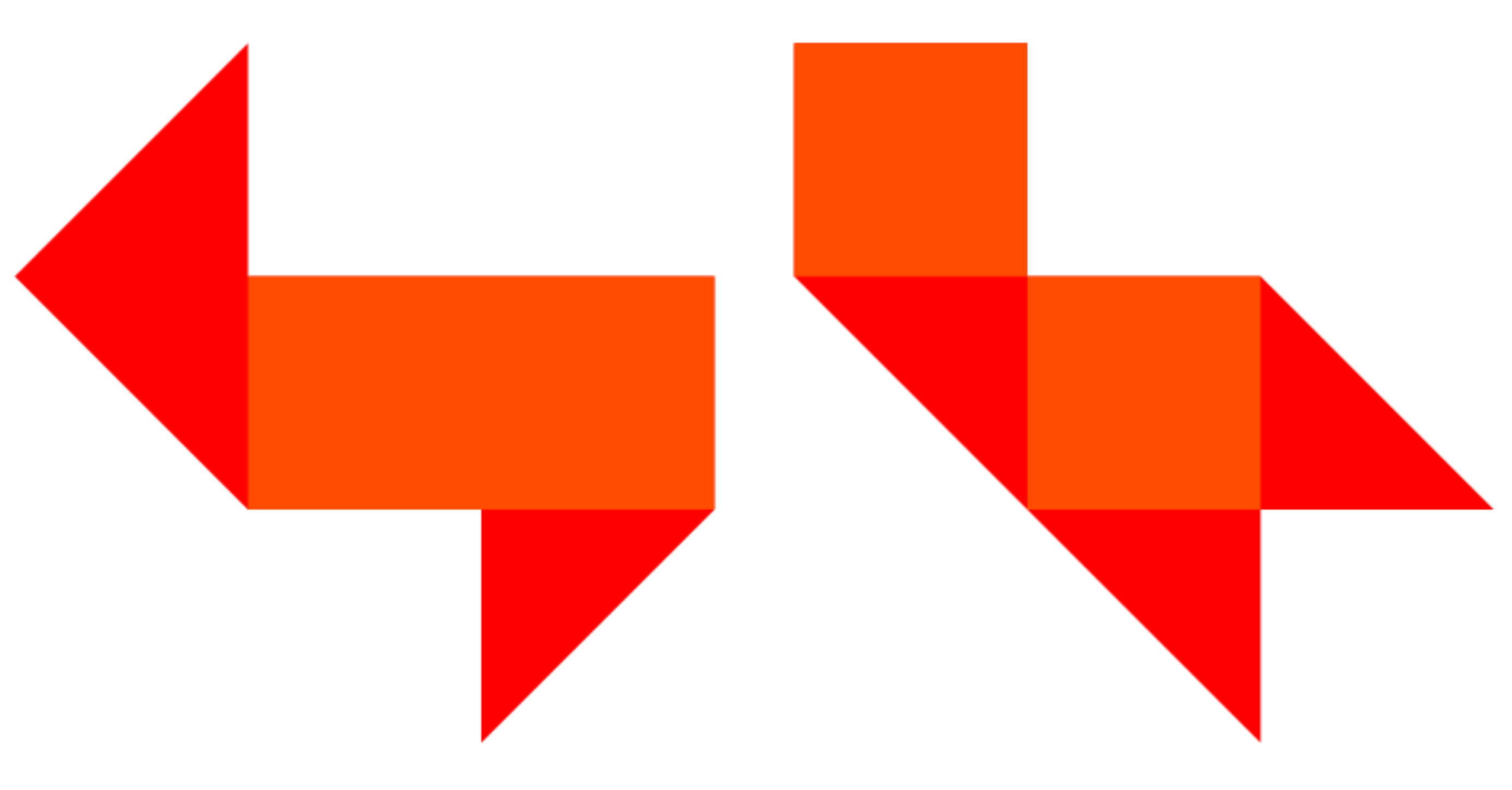}}
\caption{
\label{isospectraldrum}
}
\end{figure}

\begin{figure}[ht]
\scalebox{0.30}{\includegraphics{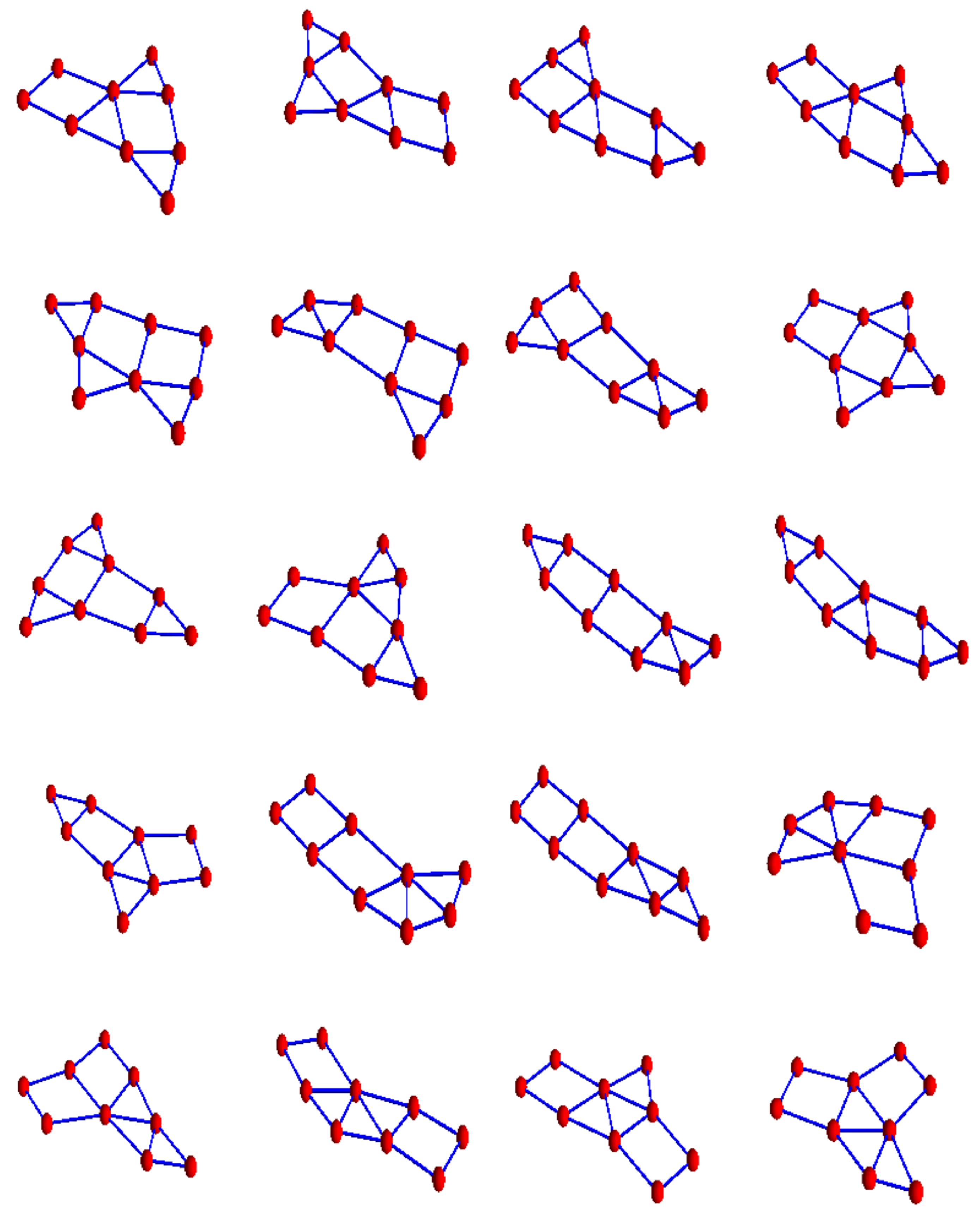}}
\caption{
\label{sunada}
}
\end{figure}

To try out graph versions of this, we computed the spectra 
of a couple of graphs of this type. They are all homotopic
to the figure 8 graph but are all not isospectral. 
This shows that discretizing Sunada can not be done naivly.
The next section mentions examples given in \cite{GWW} which 
were obtained by adapting Sunada type methods to graphs. 

\vfill
\pagebreak 

\subsection{Halbeisen-Hungerb\"uhler}

Here are the picture of the Dirac matrices and their spectrum of
the isospectral graphs constructed in \cite{GWW}. 

\begin{figure}[ht]
\scalebox{0.35}{\includegraphics{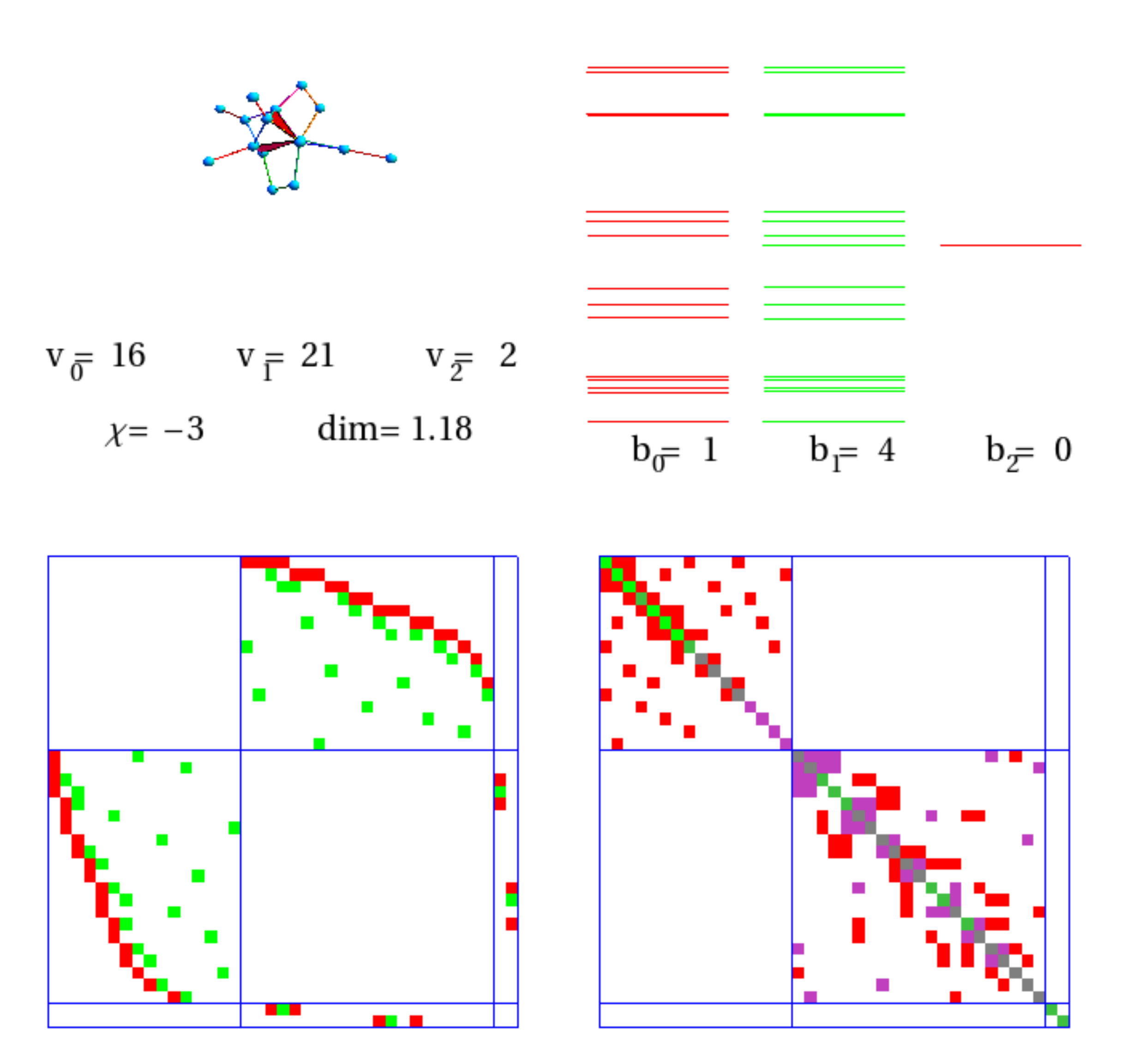}}
\scalebox{0.35}{\includegraphics{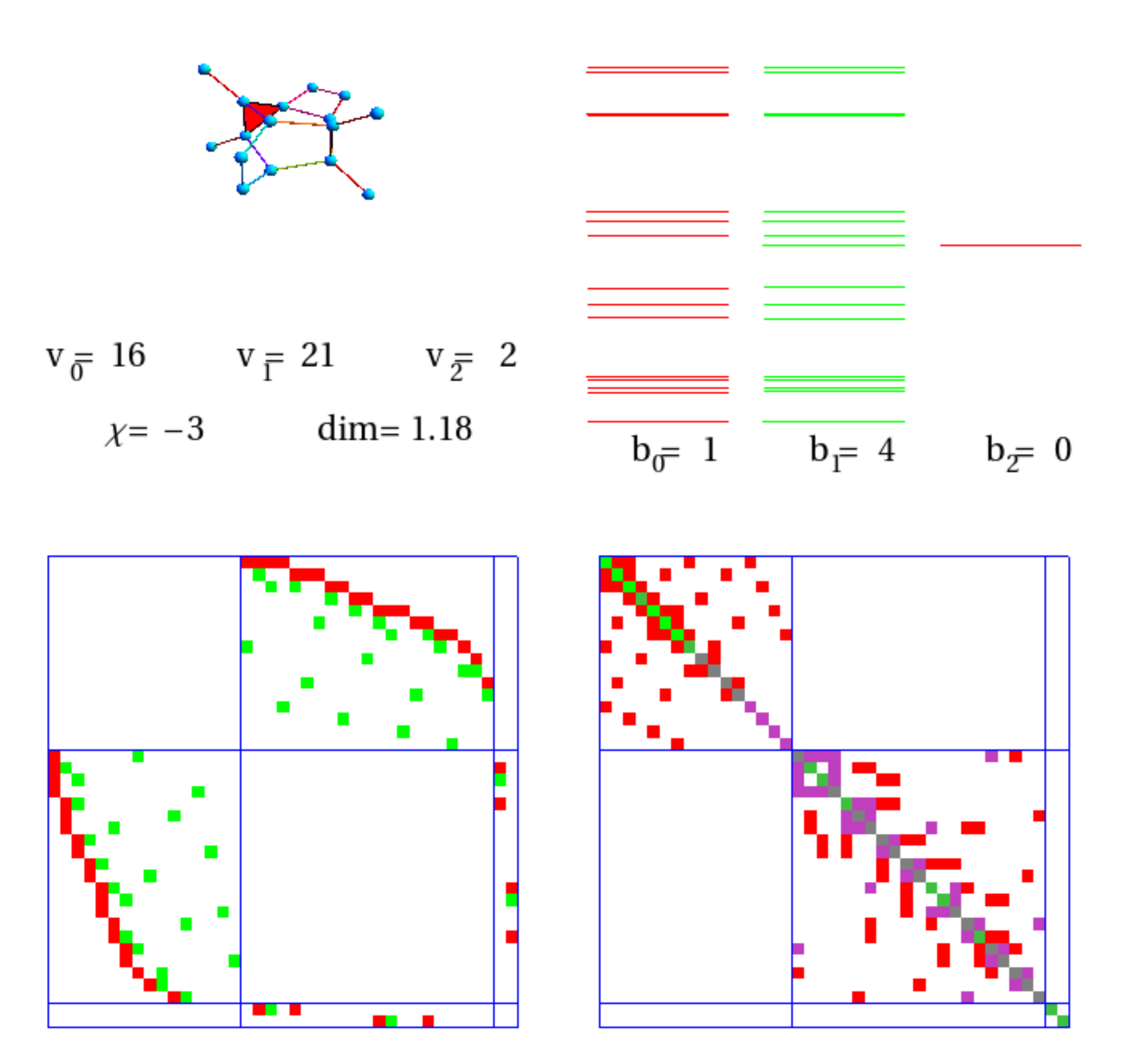}}
\caption{
\label{sunada}
}
\end{figure}

\vfill
\pagebreak

\subsection{Polyhedra}

For the 5 platonic solids, we have the following Dirac spectra and complexities $\prod_{\lambda \neq 0} \lambda$,
where we write $\lambda^n$ for $\lambda$ with multiplicity $n$.\\
\begin{tabular}{lrr} \hline
Solid       & Non-negative Dirac Spectrum          & Complexity \\ \hline
Tetrahedron &$\{2^7,0^1 \; \}$                                      & $-2^{14}$              \\
Octahedron  &$\{\sqrt{6}^3,\sqrt{2}^3,2^6,0^2 \;\}$                 & $2^{18} \cdot 3^3$     \\
Cube        &$\{\sqrt{6},2^3,\sqrt{2}^3,0^6 \;\}$                   & $-2^{10} \cdot 3$      \\
Dodecahedron&$\{\sqrt{2}^5,\sqrt{3}^4,\sqrt{5}^4,\sqrt{3-\sqrt{5}}^3,0^{12} \; \}$ & $-2^{11} \cdot 3^4 \cdot 5^4$     \\
Icosahedron &\begin{small}
$\{\sqrt{2}^5,\sqrt{3}^4,\sqrt{5}^4,\sqrt{6}^5,\sqrt{5\pm\sqrt{5}}^3,\sqrt{3\pm\sqrt{5}}^3,0^2 \; \}$
\end{small} & $2^{22} \cdot 3^9 \cdot 5^7$    \\ \hline
\end{tabular}

The following table confirms that for 
two-dimensional geometric graph for which the unit sphere is a circular graph, the 
Dirac complexity is positive. We actually came up with the statement after watching the 
following tables: 

\begin{small}
$$ A = \left[
                 \begin{array}{rrr}
                  {\rm dim}(G) & {\rm complexity} & \chi(G) \\ \hline
                  2   & -2.6121\times 10^{10} & -4 \\
                  1   & -2.6147\times 10^{42} & -60 \\
                  1   & -5.9608\times 10^{17} & -24 \\
                  2   & -2.1859\times 10^{25} & -10 \\
                  3/2 & -4.2166\times 10^{40} & -40 \\
                  3/2 & -4.7406\times 10^{16} & -16 \\
                  2   & -2.1456\times 10^{28} & -4 \\
                  2   & -8.2030\times 10^{69} & -10 \\
                  5/3 & -5.1018\times 10^{12} & -4 \\
                  5/3 & -1.0423\times 10^{30} & -10 \\
                  1   & -2.2518\times 10^{22} & -30 \\
                  1   & -2.4277\times 10^9 & -12 \\
                  \frac{5}{3} & -5.8320\times 10^6 & -2 \\
                 \end{array}
                 \right] \; , 
   C = \left[
                 \begin{array}{rrr}
                  {\rm dim}(G) & {\rm complexity} & \chi(G) \\ \hline
                   1 & -4.6449\times 10^6 & -10 \\
                   2 & 3.5323\times 10^{84} & 2 \\
                   2 & 1.9246\times 10^{35} & 2 \\
                   1 & -6.6871\times 10^{15} & -28 \\
                   1 & -1.2496\times 10^{31} & -58 \\
                   1 & -7.8275\times 10^{12} & -22 \\
                   1 & -3.4164\times 10^{15} & -22 \\
                   1 & -4.0315\times 10^{37} & -58 \\
                   3 & 3.7873\times 10^{26} & 2 \\
                   3 & 1.4404\times 10^{64} & 2 \\
                   2 & 2.7042\times 10^{44} & 2 \\
                   2 & 3.4381\times 10^{18} & 2 \\
                   3 & -1.6987\times 10^{14} & 1 \\
                  \end{array}
                 \right] \; . $$
\end{small}
\begin{figure}[ht]
\scalebox{0.25}{\includegraphics{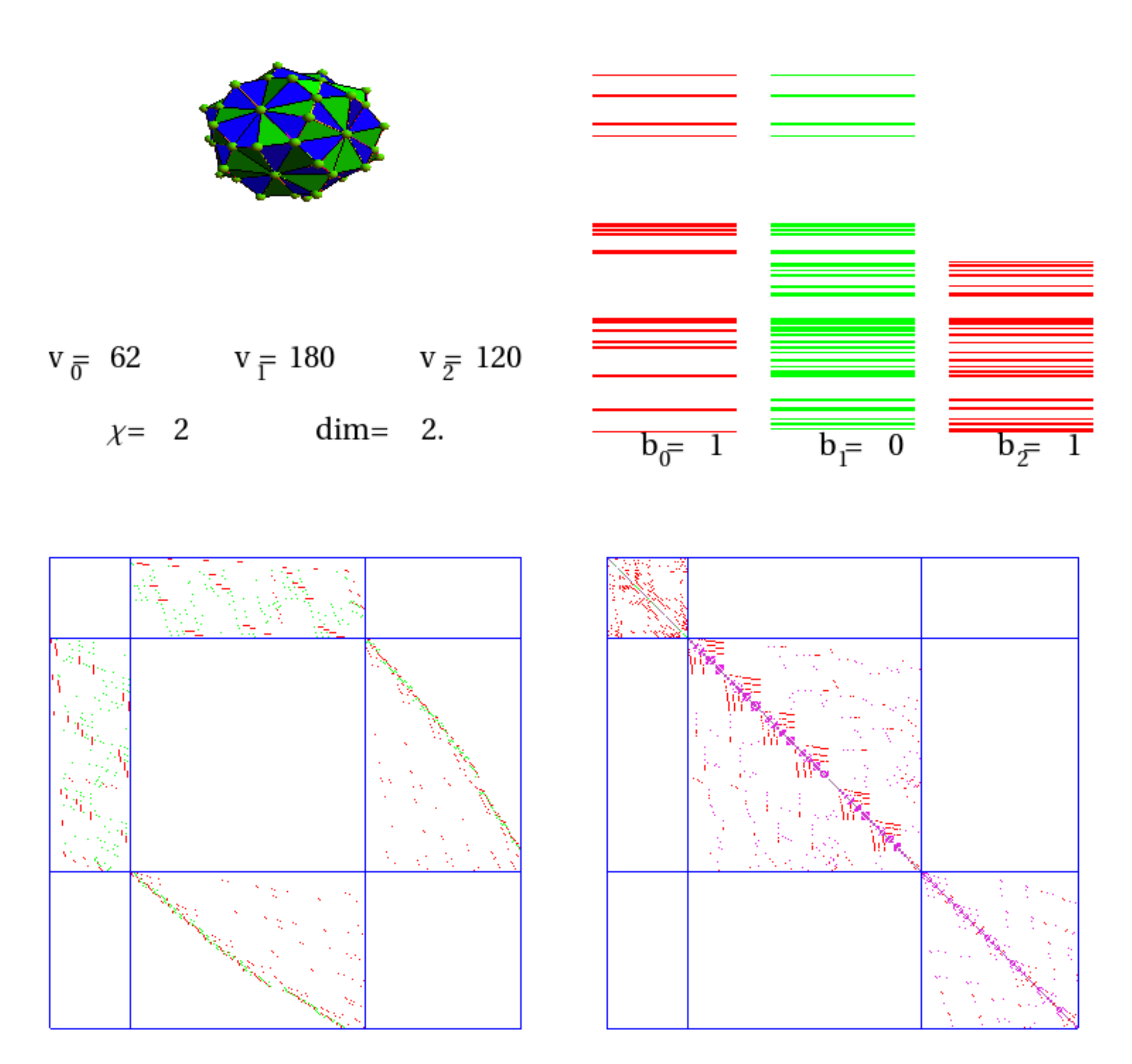}}
\caption{
The DisdyakisTriacontahedron is the Catalan solid with maximal complexity.
\label{DisdyakisTriacontahedron}
}
\end{figure}

\vfill
\pagebreak

\section*{Appendix: The original proof}

While the super symmetry argument is elegant, the original proof
is illustrative and shows better the link to Hodge theory. 
We follow \cite{McKeanSinger} but change notation and expand some of the steps slightly. 
Denote by $E_{\lambda}^p$ the eigenspace of the eigenvalue $\lambda$ of $L$ on $\Omega^p(G)$.

\begin{propo}[McKean-Singer]
$\sum_p (-1)^p \dim(E_{\lambda}^p)=0$ for all $p \geq 0$.
\label{mckeansingerpropo}
\end{propo}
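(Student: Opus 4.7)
The plan is to observe that for each nonzero eigenvalue $\lambda$ of $L$, the graded pieces $E_\lambda^p$ assemble into a finite acyclic chain complex under $d$, whose Euler characteristic must therefore vanish.

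First I would check that both $d$ and $d^*$ commute with $L$. Since $d^2=0$, we compute
$$
dL = d(dd^* + d^*d) = dd^*d = (dd^* + d^*d)d = Ld,
$$
and similarly $d^*L = Ld^*$. Consequently $d$ restricts to a linear map $d_\lambda : E_\lambda^p \to E_\lambda^{p+1}$, and $d^*$ restricts to $E_\lambda^p \to E_\lambda^{p-1}$. So for each fixed $\lambda$ we obtain a bounded chain complex $(E_\lambda^\bullet, d_\lambda)$ of finite-dimensional vector spaces.

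The main step is to show that this complex is exact whenever $\lambda \neq 0$. Suppose $f \in E_\lambda^p$ with $d_\lambda f = 0$. Then
$$
\lambda f = Lf = dd^*f + d^*d f = dd^* f,
$$
so $f = d\bigl(\lambda^{-1} d^* f\bigr)$, and $\lambda^{-1} d^* f$ lies in $E_\lambda^{p-1}$ by the commutation just established. Hence $\ker d_\lambda \subseteq \operatorname{im} d_\lambda$ at every degree, while the reverse inclusion is automatic from $d^2=0$. Thus $(E_\lambda^\bullet, d_\lambda)$ is an acyclic complex.

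Finally, the Euler characteristic of any bounded exact complex of finite-dimensional vector spaces is zero, giving $\sum_p (-1)^p \dim E_\lambda^p = 0$ for every nonzero $\lambda$. The truly delicate point — and really the whole content of the proof — is the acyclicity step, which is immediate once one notices that the Hodge identity $\lambda f = dd^*f + d^*df$ simultaneously builds an explicit primitive for any cocycle and an explicit cocycle witnessing any class. No obstacle beyond this; the rest is formal linear algebra and matches exactly the original McKean-Singer argument transcribed to the finite-dimensional, combinatorial setting.
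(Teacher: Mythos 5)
Your proof is correct, but it is organized differently from the paper's. You prove that for $\lambda\neq 0$ the restricted complex $(E_\lambda^\bullet,d)$ is exact, by exhibiting the explicit primitive $f=d(\lambda^{-1}d^*f)$ for any closed $f\in E_\lambda^p$ (equivalently, $\lambda^{-1}d^*$ is a contracting chain homotopy, since $d(\lambda^{-1}d^*)+(\lambda^{-1}d^*)d=\lambda^{-1}L=\mathrm{id}$ on $E_\lambda$), and then invoke the vanishing of the Euler characteristic of a bounded exact complex. The paper instead establishes the orthogonal direct-sum decomposition $E_\lambda^p=dE_\lambda^{p-1}\oplus d^*E_\lambda^{p+1}$ and then deduces the vanishing of the alternating sum by a two-sided inequality argument ($\geq 0$ and $\leq 0$) on the dimensions. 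The two routes rest on the same facts ($[d,L]=[d^*,L]=0$ and invertibility of $L$ on $E_\lambda$ for $\lambda\neq 0$), but yours is shorter and more conceptual, while the paper's decomposition carries a bit more information --- it splits each eigenspace orthogonally into exact and coexact parts, which is the same structure exploited in the Hodge decomposition appendix. So your closing claim that your argument ``matches exactly'' the original is not quite right; it is a cleaner homological repackaging of it. One small point: the proposition's quantifier ``for all $p\geq 0$'' is evidently a slip for ``for every eigenvalue $\lambda>0$'' (for $\lambda=0$ the alternating sum is $\chi(G)$, not $0$); you correctly restrict to $\lambda\neq 0$, which is the intended content.
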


\begin{proof}
If $\lambda$ is a positive eigenvalue, we just write $E^p$ instead of $E^p_{\lambda}$. \\
(i) $E^p = d E^{p-1} \oplus d^* E^{p+1}$ \\
Proof. That the right hand side is included in the left hand side follows
from $[d,L]=[d^*,L]=0$. For example: if $f=dg$ with $L g = \lambda g$, then 
$Lf = L dg = d Lg = d \lambda g = \lambda dg = \lambda f$ and $f \in E^p$. \\
Given $f \in d E^{p-1},g \in d^* E^{p+1}$, then $\langle f,g \rangle = \langle dh,d^* k \rangle = \langle d^2 h,k \rangle=0$
so that the two linear spaces on the right hand side are perpendicular. Assume we are
given $f \in E^p$ which is perpendicular to both subspaces. Then 
$0=\langle f,d E^{p-1} \rangle=\langle d^*f,E^{p-1} \rangle$
and $0=\langle f,d^* E^{p+1} \rangle=\langle df,E^{p+1} \rangle$ 
so that $d^*f=df=0$ and $Lf=0$ which together with $f \in E^p$ implies $f=0$.  \\
(ii) Summing the identity $\dim(E^p) = \dim(d E^{p-1}) + \dim(d^* E^{p+1})$ obtained in (i) gives
$\sum_p (-1)^p\dim(E^p)=$
\begin{eqnarray*}
   &=& \sum_{p \; even}  \dim(E^p) - \sum_{p odd}  (\dim(d E^{p-1}) + \dim(d^* E^{p+1}) )   \\
   &=& \sum_{p \; even}  \dim(E^p) - \dim(d E^p) - \dim(d^* E^p)  \\
   &=& \sum_{p \; even}  \dim(d E^{p-1}) + \dim(d^* E^{p+1}) - \dim(d E^p) - \dim(d^* E^p) \\
   &=& \sum_{p \; even}  \dim(d E^{p-1}) + \dim(d^* E^{p+1}) - \dim(d d^* E^{p+1}) - \dim(d^* d E^{p-1}) \\
   &=& \sum_{p \; even} [\dim(d E^{p-1})-\dim(d d^* E^{p+1})]+[\dim(d^* E^{p+1})-\dim(d^* d E^{p-1})] \geq 0 \; . 
\end{eqnarray*}
(iii) $\sum_p (-1)^p \dim(E^p) \leq 0$ \\
From (ii), we recycle $\sum_p (-1)^p\dim(E^p)=\sum_{p \; even}  \dim(E^p) - \dim(d E^p) - \dim(d^* E^p)$.  
The claim follows now because each term is $\leq 0$: 
\begin{eqnarray*}
   & &   \dim(E^p) - \dim(d E^p) - \dim(d^* E^p)  \\ 
   &=&   \dim(L E^p) - \dim(d E^p) - \dim(d^* E^p) \\
   &\leq& \dim(d d^* E^p) + \dim(d^* d E^p) - \dim(d E^p) - \dim(d^* E^p) \\
   &=&    [ \dim(d d^* E^p) - \dim(d E^p)] + [ \dim(d^* d E^p) - \dim(d^* E^p) ] \leq 0  \; . 
\end{eqnarray*}
The two parts (ii) and (iii) imply the proposition. 
\end{proof}

\begin{coro}
$\str(e^{-tL}) = \chi(G)$.
\end{coro}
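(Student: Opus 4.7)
The plan is to expand $e^{-tL}$ in the eigenbasis of $L$, split the super trace according to eigenvalues, and feed each piece to the right ingredient: Proposition~\ref{mckeansingerpropo} for the positive eigenvalues and the Hodge/Euler--Poincar\'e identity for the zero eigenvalue.

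First I would observe that since each $L_p$ is a finite symmetric matrix, $\Omega^p$ decomposes as the orthogonal direct sum $\bigoplus_{\lambda \geq 0} E_\lambda^p$, where $\lambda$ ranges over the (finitely many) eigenvalues of $L$. Because $L$ preserves each $\Omega^p$, we have
\[
\tr\bigl(e^{-tL}\bigr|_{\Omega^p}\bigr) = \sum_{\lambda \geq 0} e^{-t\lambda}\, \dim(E_\lambda^p),
\]
and so, interchanging the two finite sums,
\[
\str\bigl(e^{-tL}\bigr) = \sum_{p} (-1)^p \sum_{\lambda \geq 0} e^{-t\lambda}\, \dim(E_\lambda^p)
= \sum_{\lambda \geq 0} e^{-t\lambda} \Bigl( \sum_{p} (-1)^p \dim(E_\lambda^p) \Bigr).
\]

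Next I would treat the two cases of the inner sum separately. For each $\lambda > 0$, Proposition~\ref{mckeansingerpropo} gives $\sum_p (-1)^p \dim(E_\lambda^p) = 0$, so those contributions vanish outright; this is the heart of the supersymmetric cancellation and is precisely the content of the original McKean--Singer argument. For $\lambda = 0$ we use $E_0^p = \ker(L_p)$, which by the Hodge theorem (recalled in the remarks after the definition of $L_p$) has dimension equal to the $p$-th Betti number $b_p(G)$.

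Putting the pieces together leaves only the $\lambda = 0$ term,
\[
\str\bigl(e^{-tL}\bigr) = e^{0}\sum_p (-1)^p b_p(G) = \sum_p (-1)^p b_p(G) = \chi(G),
\]
where the last equality is the Euler--Poincar\'e formula already stated in Section~3. The only real work is Proposition~\ref{mckeansingerpropo}, which has just been proved; everything else is bookkeeping with the spectral decomposition, so I do not anticipate any obstacle beyond making sure the two special values $\lambda = 0$ and $\lambda > 0$ are handled separately.
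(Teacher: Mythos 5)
Your argument is correct and rests on the same key ingredient as the paper's proof, namely Proposition~\ref{mckeansingerpropo}, but the two differ in how the surviving $t$-independent term is identified with $\chi(G)$. The paper Taylor-expands $e^{-tL}$ and observes that the proposition forces $\str(L^k)=0$ for every $k>0$, so that $\str(e^{-tL})=\str(1)=\sum_p(-1)^p v_p$, which equals $\chi(G)$ by the very definition of the Euler characteristic --- no cohomology enters. You instead diagonalize, kill the $\lambda>0$ contributions by the proposition, and are left with $\sum_p(-1)^p\dim(E_0^p)$, which you then convert to $\chi(G)$ via the Hodge theorem ($\dim(E_0^p)=b_p$) together with the Euler--Poincar\'e formula. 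Both of those inputs are legitimately available (the Hodge theorem is proved independently in the following appendix), so there is no circularity, but your route carries an extra dependency that the paper's does not. The comparison is in fact instructive: the two computations evaluate the same constant function of $t$ at its $t=0$ and $t\to\infty$ ends, and placing them side by side yields the Euler--Poincar\'e identity $\sum_p(-1)^p v_p=\sum_p(-1)^p b_p$ as a byproduct, whereas your version consumes it as an input.
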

\begin{proof}
Proposition~(\ref{mckeansingerpropo}) implies $\str(L^k)=0$ for $k>0$. 
A Taylor expansion of the super trace of the heat kernel using the proposition gives
$$ \str(e^{-tL}) = \str(1-t \frac{L}{1!} + t^2 \frac{L^2}{2!} - \cdots) = \str(1) = \chi(G) \; $$
\end{proof} 

\section*{Appendix: Hodge theory for graphs}

For $t=0$ we have by definition the Euler characteristic and for $t \to \infty$, the 
supertrace of the identity on harmonic forms. The proof of the Hodge theorem 
stating that $H^p(G)$ is isomorphic to the space $E_0^p(G)$ of harmonic $p$-forms
is clearly visible in McKean-Singers proof: 

\begin{lemma}
a) $d^* L = L d^* = d^* d d^*$ and $d L = L d = d d^* d$. \\
b) $L f = 0$ is equivalent to $df=0$ and $d^* f = 0$. \\
c) $f=dg$ and $h=d^*k$ are orthogonal. ${\rm im}(d) \cup {\rm im}(d^*)$ span $im(L)$.
\end{lemma}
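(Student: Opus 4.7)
The plan is to handle the three parts in sequence, each using only the two fundamental identities $d^2 = 0$ and $(d^*)^2 = 0$ plus the adjoint relation $\langle d\alpha,\beta\rangle = \langle\alpha, d^*\beta\rangle$. None of the parts should require more than a few lines.

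For part (a) I would just compute: substituting $L = dd^* + d^*d$ gives $d^*L = d^*dd^* + (d^*)^2 d = d^*dd^*$ and $Ld^* = dd^*d^* + d^*dd^* = d^*dd^*$, with the identities for $dL$ and $Ld$ following symmetrically from $d^2 = 0$. So both $d$ and $d^*$ commute with $L$, a fact already used in the McKean-Singer proof in the previous appendix.

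For part (b), one direction is immediate: if $df = 0$ and $d^*f = 0$ then $Lf = dd^*f + d^*df = 0$. The converse uses positivity of the inner product: compute
\[
\langle Lf, f\rangle = \langle dd^*f, f\rangle + \langle d^*df, f\rangle = \|d^*f\|^2 + \|df\|^2,
\]
so $Lf = 0$ forces both $\|df\| = 0$ and $\|d^*f\| = 0$. This is the finite-dimensional avatar of the Hodge theorem's characterization of harmonic forms.

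For part (c) the orthogonality is a one-line adjoint computation: $\langle dg, d^*k\rangle = \langle g, (d^*)^2 k\rangle = 0$. For the second claim, one inclusion is direct from $L = dd^* + d^*d$, which shows $\mathrm{im}(L) \subseteq \mathrm{im}(d) + \mathrm{im}(d^*)$. For the reverse, the plan is to use that $L$ is self-adjoint, so $\Omega = \ker(L) \oplus \mathrm{im}(L)$ orthogonally, combined with part (b) which gives $\ker(L) = \ker(d) \cap \ker(d^*)$. Since in finite dimensions $\mathrm{im}(d^*) = \ker(d)^\perp$ and $\mathrm{im}(d) = \ker(d^*)^\perp$, we get
\[
\mathrm{im}(d) + \mathrm{im}(d^*) = \ker(d^*)^\perp + \ker(d)^\perp = \bigl(\ker(d) \cap \ker(d^*)\bigr)^\perp = \ker(L)^\perp = \mathrm{im}(L).
\]
The only subtle step is recognizing that the orthogonality from (c) together with $\ker(L)^\perp = \mathrm{im}(L)$ upgrades the span statement to an orthogonal direct-sum decomposition $\Omega = \ker(L) \oplus \mathrm{im}(d) \oplus \mathrm{im}(d^*)$, which is the full Hodge decomposition and immediately yields $H^p(G) \cong \ker(L_p)$. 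Everything is pure finite-dimensional linear algebra; no analytic input is needed, which is exactly the simplification the paper has been advertising.
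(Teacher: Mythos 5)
Your proof is correct and follows essentially the same route as the paper: direct computation from $d^2=(d^*)^2=0$ for (a), the positivity identity $\langle Lf,f\rangle=\|df\|^2+\|d^*f\|^2$ for (b), and the adjoint trick for the orthogonality in (c). The only difference is that you spell out the span statement $\mathrm{im}(d)+\mathrm{im}(d^*)=\mathrm{im}(L)$ via $\mathrm{im}(d^*)=\ker(d)^\perp$ and $\ker(L)^\perp=\mathrm{im}(L)$, a step the paper leaves essentially implicit and defers to the Hodge decomposition lemma; your version is the more complete one.
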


\begin{proof}
a) is clear from $d^2=(d^*)^2=0$.\\
b) if $Lf=0$ then $0=\langle f,Lf \rangle=\langle d^*f,d^f \rangle + \langle df,df \rangle$ 
shows that both $d^* f=0$ and $df=0$. The other direction is clear. \\
c) $\langle f,h \rangle = \langle dg,d^*k \rangle 
   = \langle d dg,k \rangle = \langle g,d^* d^* k \rangle=0$.
\end{proof}

Any $p$-form can be written as a sum of an exact, a coexact and a harmonic $p$-form.

\begin{lemma}[Hodge decomposition]
There is an orthogonal decomposition 
$$ \Omega = {\rm im}(L) + {\rm ker}(L) = {\rm im}(d) + {\rm im}(d^*) + {\rm ker}(L)  \; . $$ 
Any $g$ can be written as $g= df + d^* h + k$ where $k$ is harmonic.
\end{lemma}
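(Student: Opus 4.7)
The plan is to prove the Hodge decomposition by first splitting $\Omega$ into $\mathrm{im}(L) \oplus \ker(L)$ using the fact that $L$ is a self-adjoint operator on a finite-dimensional inner product space, and then refining $\mathrm{im}(L)$ further as $\mathrm{im}(d) \oplus \mathrm{im}(d^*)$ using the preceding lemma.

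First I would observe that since $L = d d^* + d^* d$ is self-adjoint on the finite-dimensional space $\Omega$, the standard linear algebra fact gives the orthogonal decomposition $\Omega = \mathrm{im}(L) \oplus \ker(L^*) = \mathrm{im}(L) \oplus \ker(L)$. This handles the first equality and produces a harmonic component $k$ for any $g$; the remaining task is to split $g - k \in \mathrm{im}(L)$ as $df + d^* h$.

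Next, for any $u \in \Omega$, $L u = d(d^* u) + d^*(d u) \in \mathrm{im}(d) + \mathrm{im}(d^*)$, which shows $\mathrm{im}(L) \subseteq \mathrm{im}(d) + \mathrm{im}(d^*)$. Conversely, given any element $df \in \mathrm{im}(d)$, decompose $f = f_0 + f_1$ with $f_0 \in \ker(L)$ and $f_1 \in \mathrm{im}(L)$ (using the decomposition established above on the appropriate $\Omega_p$). By part (b) of the previous lemma, $f_0 \in \ker(L)$ forces $df_0 = 0$, so $df = df_1 \in d\,\mathrm{im}(L) \subseteq \mathrm{im}(L)$ since $d$ commutes with $L$ by part (a). The same argument with $d$ replaced by $d^*$ shows $\mathrm{im}(d^*) \subseteq \mathrm{im}(L)$, giving the reverse inclusion. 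Orthogonality of $\mathrm{im}(d)$ and $\mathrm{im}(d^*)$ is immediate from part (c), and both are orthogonal to $\ker(L)$ since $\ker(L) = \ker(d) \cap \ker(d^*)$ by part (b).

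The main (minor) obstacle is the bookkeeping between the graded pieces $\Omega_p$ and making sure that when one writes $f = f_0 + f_1$ the operators $d$ and $d^*$ do not move mass out of $\mathrm{im}(L)$; this is handled cleanly by the commutation relations $dL = Ld$ and $d^* L = L d^*$ from part (a). Once the inclusions and orthogonalities are all in place, extracting the formula $g = df + d^* h + k$ is immediate by projecting $g$ onto the three orthogonal summands.
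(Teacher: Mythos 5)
Your proposal is correct and follows essentially the same route as the paper: orthogonality of image and kernel of the self-adjoint operator $L$ gives the first splitting, and the preceding lemma's parts (a)--(c) give the refinement of ${\rm im}(L)$ into ${\rm im}(d) \oplus {\rm im}(d^*)$. You actually supply more detail than the paper does for the inclusion ${\rm im}(d) + {\rm im}(d^*) \subseteq {\rm im}(L)$ (the paper simply cites the previous lemma's assertion that these images span ${\rm im}(L)$), and your argument for that step is sound.
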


\begin{proof}
The operator $L: \Omega \to \lambda$ is symmetric so that image and kernel are
perpendicular. We have seen before that the image of $L$ splits into two
orthogonal components ${\rm im}(d)$ and ${\rm im}(d^*)$.
\end{proof}

\begin{thm}[Hodge-Weyl]
The dimension of the vector space of harmonic $k$-forms on
a simple graph is $b_k$. Every cohomology class has a unique
harmonic representative.
\end{thm}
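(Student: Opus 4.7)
The plan is to exhibit an explicit linear isomorphism between the space $E_0^k$ of harmonic $k$-forms (i.e.\ $\ker L_k$) and the cohomology group $H^k(G) = \ker(d_k)/\im(d_{k-1})$, using the Hodge decomposition lemma already in hand together with part (b) of the preceding lemma, which says $L\omega = 0$ iff both $d\omega = 0$ and $d^*\omega = 0$.

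First I would define a map $\Phi : E_0^k \to H^k(G)$ by $\Phi(\omega) = [\omega]$. This is well-defined: any harmonic $\omega$ satisfies $d\omega = 0$ by part (b), so $\omega$ is a cocycle and represents a class. Linearity is immediate. For \emph{injectivity}, suppose $\omega$ is harmonic and $\omega = d\alpha$ is exact. Then, using $d^*\omega = 0$ (again by part (b)),
\begin{equation*}
\langle \omega, \omega \rangle = \langle d\alpha, \omega \rangle = \langle \alpha, d^*\omega \rangle = 0,
\end{equation*}
so $\omega = 0$. For \emph{surjectivity}, start with any cocycle $\beta \in \ker(d_k)$ and apply the Hodge decomposition to write $\beta = d\alpha + d^*\gamma + \kappa$ with $\kappa$ harmonic and the three pieces mutually orthogonal. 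Applying $d$ gives $0 = d\beta = dd^*\gamma$, whence
\begin{equation*}
\langle d^*\gamma, d^*\gamma\rangle = \langle dd^*\gamma, \gamma\rangle = 0,
\end{equation*}
so the coexact piece vanishes. Thus $\beta = d\alpha + \kappa$ and $[\beta] = [\kappa] = \Phi(\kappa)$.

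This shows $\Phi$ is a linear isomorphism, giving $\dim E_0^k = b_k$. The uniqueness statement (each cohomology class has a unique harmonic representative) is precisely the injectivity of $\Phi$: if $\kappa_1, \kappa_2$ are harmonic with $\kappa_1 - \kappa_2 = d\alpha$, then $\kappa_1 - \kappa_2$ is a harmonic exact form, and the computation in the injectivity step forces it to be zero.

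I expect no real obstacle: every ingredient is already in the excerpt. The only step requiring a line of care is checking that $d^*\gamma = 0$ after applying $d$ to the Hodge decomposition, since a priori $dd^*\gamma = 0$ only controls $d^*\gamma$ up to the kernel of $d$; the self-adjointness trick $\langle dd^*\gamma,\gamma\rangle = \|d^*\gamma\|^2$ resolves this cleanly. Once this is in place, the theorem reduces to bookkeeping about the orthogonal decomposition.
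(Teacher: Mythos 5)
Your proof is correct and follows essentially the same route as the paper: harmonic implies closed via $df=d^*f=0$, then the Hodge decomposition of a cocycle with the coexact piece killed by $\langle dd^*\gamma,\gamma\rangle=\|d^*\gamma\|^2$. In fact your write-up is more complete than the paper's, which asserts the isomorphism without spelling out the injectivity computation $\langle d\alpha,\omega\rangle=\langle\alpha,d^*\omega\rangle=0$ or the vanishing of the coexact part, both of which you supply correctly.
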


\begin{proof}
If $Lf=0$, then $df=0$ and so $L f = d^* d f=0$. Given a closed $n$-form $g$
then $dg=0$ and the Hodge decomposition shows $g=df + k$ so that $g$
differs from a harmonic form only by $df$ and so that this harmonic form
is in the same cohomology class than $f$. We can assign so to a cohomology
class a harmonic form and this map is an isomorphism.
\end{proof}

\section*{Appendix: \c{C}ech cohomology for graphs}

We have mentioned that the Dirac operator has appeared when computing \v{C}ech cohomology
of a manifold and investigating the relation between the spectrum of the graph and the 
spectrum of the manifold \cite{Mantuano}. We want to show here that \v{C}ech cohomology
is equivalent to graph cohomology. This is practical: 
while graph cohomology is easy to implement in a computer, it can be tedious to compute
the operator $D$ and so cohomology groups. If the zero eigenvalues and so the cohomology 
is the only interest in $D$, then one can proceed in two different but related ways:
One possibilities to reduce the complexity of computing the kernel of $D$ is to deform 
the graph using homotopy steps to get a simpler graph. Another possibility is to look at 
the Dirac operator of the nerve. The following definition is equivalent to a definition of 
Ivashchenko:

\begin{defn}
A graph is called {\bf contractible in itself}, if there is a proper subgraph $H$ which is
contractible and which is the unit sphere of a vertex. The graph $G$ without the vertex $x$ and
all connections to $x$ is called a contraction of $G$. For a contractible graph, there is 
a sequence of such contraction steps which reduces $G$ to a one point graph. 
\end{defn}

This allows now to consider \v{C}ech cohomology for graphs: 

\begin{defn}
A \c{C}ech cover of $G$ is a finite set of subgraphs $U_j$ which are contractible and for
which any finite intersection of such subgraphs is either empty or contractible. A \v{C}hech
cover defines a new graph $N$ called the {\bf nerve} of the cover. The vertices of $N$ are 
the elements of the cover and there is an edge $(a,b)$ if both the subgraphs $a,b$ have a nonempty 
intersection. {\bf \v{C}ech cohomology} is then defined as the graph cohomology of the nerve of $G$. 
\end{defn}

{\bf Remark.} The number of vertices of the nerve of $G$ is an upper bound for the geometric category 
$\gcat(G)$ of $G$ and so to strong category $\Cat(G)$ \cite{josellisknill}. \\

\begin{propo}
The nerve of a \c{C}ech cover of $G$ is homotopic to $G$. 
\end{propo}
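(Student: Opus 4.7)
The plan is to adapt the classical nerve theorem to the discrete setting by constructing an intermediate graph that collapses, via the Ivashchenko-type contractions from the definition above, to both $G$ and the nerve $N$.

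First I would build the \emph{incidence graph} $M$ on the vertex set $V(G) \sqcup \{U_j\}$ whose edges are of three types: (a) the original edges of $G$, (b) the edges of $N$, so $U_i \sim U_j$ whenever $U_i \cap U_j \neq \emptyset$, and (c) an incidence edge $(v,U_j)$ whenever $v$ is a vertex of the subgraph $U_j$. By construction, $G$ and $N$ appear as induced subgraphs of $M$ on complementary vertex sets. The strategy is then to exhibit one sequence of contractions that reduces $M$ to $N$ (by deleting the $V(G)$-vertices one at a time) and another sequence that reduces $M$ to $G$ (by deleting the cover vertices $U_j$ one at a time). Since the homotopy relation generated by the contraction steps is transitive, this yields $G \simeq M \simeq N$.

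To collapse $M$ onto $N$, I would order $V(G) = \{v_1, \dots, v_n\}$ and remove them in turn, verifying at each stage that the unit sphere of $v_i$ in the current graph is contractible. That sphere has two parts: the remaining $G$-neighbors of $v_i$, and the family $\{U_j : v_i \in U_j\}$, which spans a complete subgraph in $M$ because any subcollection still contains $v_i$ in its intersection. The star of $v_i$ is essentially a cone from these cover vertices over the $G$-neighbors already contained in them, and I would show contractibility by collapsing each remaining $G$-neighbor of $v_i$ into some $U_j$ containing both. Symmetrically, to collapse $M$ onto $G$ I would peel off the cover vertices: the unit sphere of $U_j$ in $M$ consists of the subgraph $U_j$ itself, contractible by hypothesis, together with the remaining cover vertices $U_k$ intersecting it, each connected precisely to the vertices of $U_j \cap U_k$. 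Since $U_j \cap U_k$ is contractible and sits inside $U_j$, one can contract each such $U_k$ down into $U_j$ before $U_j$ itself is removed.

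The main obstacle is verifying Ivashchenko-contractibility of these unit spheres at every stage, because the discrete notion of \emph{contractible in itself} is delicate: it demands an explicit finite sequence of elementary moves, and subgraph contractibility is not automatically preserved as the ambient graph shrinks. One is forced into an inner induction showing that a graph obtained as the union of a contractible base with cones over contractible subspaces is itself contractible, and one must choose the deletion orders carefully so that the good-cover condition, namely that every finite intersection $U_{j_1} \cap \cdots \cap U_{j_k}$ is empty or contractible, remains available at each step. Once this combinatorial bookkeeping is in place, the two one-sided collapses combine to give the homotopy equivalence asserted by the proposition.
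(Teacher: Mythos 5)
Your strategy is genuinely different from the paper's. The paper works entirely inside $G$: it expands each $U_j$ by homotopy steps until all intersections $U_k \cap U_j$ lie in a small neighborhood $B(x_j)$ of a chosen vertex $x_j \in U_j$, and then shrinks each $U_j$ onto $B(x_j)$, leaving exactly the nerve. You instead build the auxiliary incidence graph $M$ and collapse it separately onto $G$ and onto $N$; this is the classical ``blow-up'' proof of the nerve theorem transplanted to Ivashchenko homotopy, and in principle it is a cleaner, more symmetric route. But as written the proposal has genuine gaps, and they sit exactly where you flag ``the main obstacle'': none of the required unit-sphere contractibility is actually established, and that is where all the content of the statement lives.

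Two concrete failure points. First, the collapse of $M$ onto $N$ needs every edge of $G$ to lie inside a single cover element: if $w$ is a $G$-neighbor of $v_i$ but no $U_j$ contains both $v_i$ and $w$, then inside $S_M(v_i)$ the vertex $w$ is not attached to the complete graph on $\{U_j : v_i \in U_j\}$, the sphere can disconnect, and deleting $v_i$ is not a legal contraction step. This hypothesis is not part of the paper's definition of a \v{C}ech cover and you never invoke it. Second, and symmetrically, the collapse of $M$ onto $G$ uses that the part of $S_M(U_j)$ supported on $V(U_j)$ is contractible; but in $M$ that part is the subgraph of $G$ \emph{induced} on $V(U_j)$, which may have more edges than $U_j$ and need not be contractible unless the cover elements are induced subgraphs. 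Finally, the ``inner induction'' you defer --- that a contractible core with vertices coned over contractible attaching sets remains contractible at every intermediate stage of the deletion order --- is precisely the discrete nerve lemma and is left unproved. The proposal is therefore a reasonable plan for an alternative argument, but not yet a proof; to complete it you must either carry out those contractibility lemmas (with the strengthened cover hypotheses) or revert to the paper's expand-and-shrink argument inside $G$.
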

\begin{proof}
In a first step  we expand each of the $U_j$ so that in each $U_j$, 
the intersections $U_k \cap U_j$ are in a 
neighborhood $B(x_j)$ of a point $x_j \in U_j$. 
Now homotopically shrink each $U_j$ to $B(x_j)$. 
\end{proof}

\begin{coro}
\c{C}ech cohomology on a finite simple graph is equivalent to graph cohomology on that graph.
\end{coro}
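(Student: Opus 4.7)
The plan is to read the corollary as an immediate consequence of the preceding proposition plus the definition of \v{C}ech cohomology, with the only substantive input being homotopy invariance of graph cohomology. By definition, the \v{C}ech cohomology groups of $G$ with respect to a cover $\{U_j\}$ are the graph cohomology groups $H^k(N)$ of the nerve $N$. The preceding proposition states that $N$ is homotopic to $G$ via Ivashchenko-type contractions. So the corollary reduces to the assertion that $H^k(N) \cong H^k(G)$ whenever $N$ and $G$ are related by a sequence of contraction steps.

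First I would reduce to a single contraction step: it suffices to show that if $G'$ is obtained from $G$ by removing a vertex $x$ whose unit sphere $S(x)$ is contractible, then $H^k(G') \cong H^k(G)$ for every $k$. By the Hodge theorem already recalled in the appendix, this is equivalent to showing that the Betti numbers $b_k = \dim \ker L_k$ are preserved under such a move. One natural route is to write down the chain-level comparison: the inclusion $\Omega^*(G') \hookrightarrow \Omega^*(G)$ and a retraction given by contraction along $x$, then check that the induced maps on cohomology are mutually inverse. The contractibility of $S(x)$ is exactly what is needed to guarantee that the simplices containing $x$ contribute an acyclic subcomplex, so removing them does not change cohomology.

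Once the single-step case is in hand, I would iterate: a sequence of contractions $G = G_0 \to G_1 \to \cdots \to G_m = N$ (or the reverse, depending on which direction the homotopy in the proposition runs) produces a chain of isomorphisms $H^k(G) \cong H^k(G_1) \cong \cdots \cong H^k(N)$. Combining with the definition $\check{H}^k(G) := H^k(N)$ gives the corollary. A small bookkeeping point is that the proposition produces the homotopy by first enlarging each $U_j$ and then shrinking it to a small ball $B(x_j)$; each of these moves must be recognized as a composition of the elementary Ivashchenko contractions so that the chain-level argument applies.

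The main obstacle is the single-step homotopy invariance lemma: proving carefully that deleting a vertex whose link is contractible preserves the kernels of all the $L_k$. This is the standard but slightly delicate content hidden behind the word \emph{homotopy}, and it is the only place where anything beyond unwinding definitions happens. Everything else—the reduction to nerve, the iteration over contraction steps, the identification with \v{C}ech cohomology—is formal.
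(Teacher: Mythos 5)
Your proposal follows the same route as the paper: \v{C}ech cohomology is by definition the graph cohomology of the nerve, the preceding proposition makes the nerve homotopic to $G$, and homotopy invariance of graph cohomology finishes the argument. The only difference is that the paper simply cites Ivashchenko \cite{I94} for homotopy invariance (and additionally observes that the unit-ball cover has nerve equal to $G$), whereas you sketch a proof of the single-contraction-step invariance yourself; you have correctly identified that lemma as the one substantive input.
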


\begin{proof}
Graph cohomology obviously is identical to \v{C}ech cohomology if
we chose the cover $U_j = B(x_j)$ for the vertices $x_j$ which has the
property that the nerve of $G$ is equal to $G$. Because the nerve $N$
of $G$ is homotopic to $G$ and graph cohomology of homotopic graphs is
the same (as already proven by Ivashchenko \cite{I94}), we are done. 
\end{proof}

\vfill 
\pagebreak

\vspace{12pt}

\end{document}